\newtheorem{theorem}{Theorem}
\newtheorem{proposition}[theorem]{Proposition}
\newtheorem{lemma}[theorem]{Lemma}
\newtheorem{definition}[theorem]{Definition}
\newtheorem{corollary}[theorem]{Corollary}
\newtheorem{remark}[theorem]{Remark}
\theoremstyle{definition}
\newtheorem{example}[theorem]{Example}
\newcommand{\C}{{\mathbb C}}
\newcommand{\R}{{\mathbb R}}
\newcommand{\Z}{{\mathbb Z}}
\newcommand{\N}{{\mathbb N}}
\newcommand{\Q}{{\mathbb Q}}
\newcommand{\CH}{{\mathcal H}}
\newcommand{\CL}{{\mathcal L}}
\newcommand{\CM}{{\mathcal M}}
\newcommand{\CV}{{\mathcal V}}
\newcommand{\la}{{\langle}}
\newcommand{\ra}{{\rangle}}
\newcommand{\card}{\operatorname{Card}}
\newcommand{\lin}{\operatorname{lin}}
\newcommand{\vol}{\operatorname{vol}}
\def\binomial(#1,#2){\binom{#1}{#2}}
\def\mult(#1,#2){\binom{#1}{#2}}   % multinomial
\renewcommand{\a}{{\mathfrak{a}}}
\renewcommand{\b}{{\mathfrak{b}}}
\renewcommand{\c}{{\mathfrak{c}}}
\newcommand{\e}{{\mathrm{e}}}
\renewcommand{\t}{{\mathfrak{t}}}
\newcommand{\p}{{\mathfrak{p}}}
\newcommand{\lattice}{\Lambda}
\newcommand{\co}{{k_0}}
\newcommand{\ellpert}{{\ell'}} %% perturbation vector for \ell
\newcommand{\Moebius}{\textup{M\"obius}}
\def\ve#1{#1}    % our vectors are not boldface
\newcommand{\DeclareBracket}[3]{
  \newcommand{#1}[2][]{%
  \ifthenelse%
  {\equal{##1}{}}%
  {\left#2##2\right#3}%
  {\csname ##1l\endcsname#2##2\csname ##1r\endcsname#3}}}    
\DeclareBracket\charfun[]
\newcommand{\Jposet}[2]{\mathcal J^{#1}_{\geq #2}}   % Poset of subsets of #1 of cardinality >= #2
\newcommand\ifpdf
\newenvironment{inputlist}
  {\begin{enumerate}[\quad\rm({I}$_\bgroup 1\egroup$)]}
  {\end{enumerate}}
\newenvironment{outputlist}
  {\begin{enumerate}[\quad\rm({O}$_\bgroup 1\egroup$)]}
  {\end{enumerate}}
\title[Highest  Ehrhart
  coefficients]{Computation of the highest coefficients\\ of weighted  Ehrhart quasi-polynomials\\ of rational polyhedra}
\author{V. Baldoni}
\address{Velleda Baldoni: Dipartimento di Matematica, Universit\`a degli studi di  Roma ``Tor Vergata'',
Via della ricerca scientifica 1, I-00133 Roma, Italy}
\email{baldoni@mat.uniroma2.it}
\author{N. Berline}
\address{Nicole Berline: Centre de Math\'ematiques Laurent Schwartz, \'Ecole Polytechnique, 91128 Palaiseau Cedex, France}
\email{nicole.berline@math.polytechnique.fr}
\author{J. A. De Loera}
\address{Jes\'us A. De Loera:  Department of
  Mathematics, University of California,
  Davis, One Shields Avenue, Davis, CA, 95616, USA}
\email{deloera@math.ucdavis.edu}
\author{M. K\"oppe}
\address{Matthias~K\"oppe:  Department of
  Mathematics, University of California,
  Davis, One Shields Avenue, Davis, CA, 95616, USA}
\email{mkoeppe@math.ucdavis.edu}
\author{M. Vergne}
\address{Mich\`ele Vergne: Institut de Math\'ematiques de Jussieu, Th{\'e}orie des
  Groupes, Case 7012, 2 Place Jussieu, 75251 Paris Cedex 05, France}
 \email{vergne@math.jussieu.fr}
\date{Nov 7, 2010 (Revision \RCSRevision)}
\begin{document}
\thanks{2010 Mathematics Subject
  Classification:  
  05A15 (Primary); 52C07, 68R05, 68U05, 52B20 (Secondary)}
\maketitle

\begin{abstract}
  This article concerns the computational problem of counting the lattice points inside convex polytopes, when each point
  must be counted with a weight associated to it. We describe an efficient algorithm for computing the highest degree
  coefficients of the weighted Ehrhart quasi-polynomial for a rational simple
  polytope in varying dimension, when the weights of the lattice points
  are given by a polynomial function $h$. Our technique  is based on a refinement of an algorithm of 
  A.~Barvinok in the unweighted case (i.e., $h \equiv 1$). 
 % [\emph{Computing the {E}hrhart quasi-polynomial
  %  of a rational simplex}, Math.\ Comp.\ \textbf{75} (2006), pp.~1449--1466].
  In contrast to Barvinok's method, our method is local, obtains an
  approximation on the level of generating functions,  handles the general weighted
  case, and provides the coefficients in closed form as step polynomials of the dilation.  To
  demonstrate the practicality of our approach we report on   
  computational experiments which show even our simple implementation can compete
  with state of the art software.
\end{abstract}

{\scriptsize\tableofcontents}

\section{Introduction}

Computations with lattice points in convex polyhedra arise in various areas of computer science, 
mathematics, and statistics (see e.g., \cite{beckrobins, berichtesurvey,MicciancioGoldwasserbook} and the many references therein). 
Given $\p$, a rational convex polytope in~$\R^d$, and $h(x)$, a polynomial function on $\R^d$ (often called a \emph{weight function}), 
this article considers the important computational problem of computing, or estimating, the sum of the values of $h(x) $ over the lattice 
points belonging to $\p$, namely
$$
S(\p,h)=\sum_ {x\in \p \cap \Z^d}h(x).
$$
The function $S(\p,h)$ has already been studied
extensively in the \emph{unweighted} case, i.e., when $h(x)$ takes
only the constant value $1$ (in that case of course $S(\p,1)$ is just the number of lattice points of $\p$).
Many papers and books have been written about the structure of that function
(see, e.g.,
\cite{barvinokzurichbook,beckrobins} and the many references therein).    Nevertheless, in many applications $h(x)$ can 
  be a much more complicated function. Important
examples of such a situation appear, for instance, in enumerative
combinatorics \cite{andrewspaulereise}, statistics \cite{DG,
  Chenetalstats}, symbolic integration \cite{baldoni-berline-deloera-koeppe-vergne:integration} and non-linear optimization
\cite{deloera-hemmecke-koeppe-weismantel:intpoly-fixeddim}. Still, only 
a small number of algorithmic results exist about the case of  an arbitrary polynomial $h$.

It is well-known that when the polyhedron $\p$ is dilated by an
integer factor $n\in \N$, we obtain a function of $n$, the so-called
\emph{weighted Ehrhart quasi-polynomial} of the pair $(\p,h)$, namely
$$
S(n\p,h)=\sum_ {x\in n\p \cap \Z^d}h(x)= \sum_{m=0}^{d+ M} E_m(n \bmod q) \, n ^m.
$$ 
This is a quasi-polynomial in the sense that the function is a sum
of monomials up to degree $d+M$, where $M=\deg h$, but whose
coefficients $E_m$ are periodic functions of $n$.
The coefficient functions $E_m$ are periodic functions
with period~$q$, where $q\in\N$ is the smallest positive integer  such that $q\p$ is a \emph{lattice polytope}, i.e.,
its vertices are lattice points.
We will make
this more precise later (we
recommend \cite{barvinokzurichbook,beckrobins} for excellent
introductions to this topic). 

To begin realizing the richness of $S(n\p,h)$,
note that its leading highest degree coefficient $E_{d+M}$ (which actually
does not depend on~$n$) is precisely equal to
$\int_\p h(x) \,\mathrm d x$, i.e., the integral of $h$ over the polytope~$\p$, when $h$
is homogeneous of degree~$M$. These
integrals were studied in \cite{Barvinok-1991}, \cite{Barvinok-1992} and more recently at
\cite{baldoni-berline-deloera-koeppe-vergne:integration}. Still most other coefficients are
difficult to understand, even for easy polytopes, such as simplices (see \cite{beifangchen:2002} for
a survey of results and challenges).
The key aim of this article is to achieve the fast computation of the
first few top-degree (weighted) coefficients $E_m$ via
an approximation of $S(n\p,h)$ by a quasi-polynomial
that shares the highest coefficients with $S(n\p,h)$.\smallbreak

We now explain the known results achieved so far in the literature. 
It is important to stress that computing \emph{all} the
coefficients $E_m$ for $m=0,\dots, d+M$ is an NP-hard problem, thus the
best one can hope for theoretical results is to obtain an \emph{approximation}, as we propose to do here.
Until now most results dealt only with the \emph{unweighted case}, i.e., $h(x)=1$ and we
summarize them here:  A. Barvinok first obtained for lattice
polytopes~$\p$ 
a polynomial-time algorithm
that for a fixed integer $k_0$ can compute the highest $k_0$
coefficients $E_m$ (see~\cite{barvinok-1994:ehrhart}). For this he used
Morelli's identities \cite{morelli} and relied on an oracle that
computes the volumes of faces. 

Later, in \cite{barvinok-2006-ehrhart-quasipolynomial},  
Barvinok obtained a formula relating the $k$ highest degree  coefficients of the (unweighted) Ehrhart
quasi-polynomial of a rational polytope to volumes of sections of
the polytope by certain affine lattice subspaces of codimension $< k$.  As a consequence, he proved that the
$k$ highest degree coefficients of the unweighted Ehrhart
quasi-polynomial of a \emph{rational simplex} can be computed by a
polynomial algorithm, when the dimension $d$ is part of the input, but $k$ is fixed.
More precisely, given a \emph{dilation class} $n\bmod q$ with $n\in\N$, Barvinok's algorithm
computes the \emph{numbers} $E_m(n \bmod q)$ by an interpolation technique.
However, neither a closed formula for these $E_m$, depending on $n$, nor a
generating function for the coefficients became available
from~\cite{barvinok-2006-ehrhart-quasipolynomial}.  In fact,
in that article \cite[Section~8.2]{barvinok-2006-ehrhart-quasipolynomial} 
the question of efficiently computing such a closed form expression was raised.

A key point of both Barvinok's and our method is the following.
The  sum $S(\p,h)$ has  natural generalizations, the \emph{intermediate} sums
$S^L(\p,h)$, where $L\subseteq V = \R^d$ is a rational vector subspace. For
a polytope $\p\subset V$ and a polynomial $h(x)$
$$
S^L(\p,h)= \sum_{x} \int_{\p\cap (x+L)} h(y)\,\mathrm dy,
$$
where the summation index $x$ runs over the projected lattice in
$V/L$. In other words, the polytope $\p$ is sliced along lattice
affine subspaces parallel to $L$ and the integrals of $h$ over the
slices are added up. For $L=V$, there is only one term and
$S^V(\p,h)$ is just the integral of $h(x)$ over $\p$, while, for
$L=\{0\}$, we recover $S(\p,h)$. Barvinok's method in~\cite{barvinok-2006-ehrhart-quasipolynomial} was to  introduce
particular linear combinations of the intermediate sums,
$$
\sum_{L\in\CL}\lambda(L)S^L(\p,h).
$$
It is natural to replace   the polynomial  weight $h(x)$
with an exponential function $x\mapsto \e^{\la \xi,x\ra}$, and consider the
corresponding holomorphic functions of $\xi$ in the dual $V^*$.
Moreover, one can allow $\p$ to be unbounded, then the sums
$$
S^L(\p)(\xi) = \sum_{x} \int_{\p\cap (x+L)} \e^{\la \xi,y\ra}\,\mathrm dy
$$
still make sense as meromorphic functions on $V^*$. The map $\p\mapsto
S^L(\p)(\xi)$ is a valuation. 

In
\cite{Baldoni-Berline-Vergne-2008}, it was  proved that 
a version of Barvinok's construction \emph{on the level of generating
  functions}, namely 
$\sum_{L\in\CL}\lambda(L)S^L(\p)(\xi)$, approximates $S(\p)(\xi)$ in
a certain ring of meromorphic functions (a precise statement is
given below). The proof in \cite{Baldoni-Berline-Vergne-2008} relied
on the Euler-Maclaurin expansion of these functions. Another proof,
using the Poisson summation formula, will appear in
\cite{berline-brion-vergne:poisson}. 

In the present article, we introduce a \emph{simplified} way to approximate
$S(\p)(\xi)$ for the case of a simplicial affine cone $\p=s+\c$, 
which levels the way for a practical and  efficient implementation.
Via Brion's
theorem, it is sufficient to sum up these \emph{local} contributions of the
tangent cones of the vertices. 
We present a method for computing the highest degree coefficients of the Ehrhart
quasi-polynomial of a rational simple polytope, by applying the approximation
theorem to each of the cones at vertices of $\p$. 
The complexity depends  on the number of vertices of the
polytope, and thus if the simple polytope is presented by its vertices (rather than
by linear inequalities), we obtain a polynomial-time algorithm. 
In particular, the algorithm is polynomial-time for the case of a simplex.
We obtain the Ehrhart coefficient functions $E_m(n \bmod q)$ in a \emph{closed
  form} as \emph{step polynomials}, i.e., polynomials in $n$ whose
coefficients are 
modular expressions $(\zeta_i n) \bmod q_i$ with integers $\zeta_i$ and~$q_i$,
for example $(2 n) \bmod 3$.
Having a closed formula available considerably strengthens Barvinok's result
in~\cite{barvinok-2006-ehrhart-quasipolynomial} even in the
unweighted case~$h=1$.\smallbreak

The structure of this paper is as follows.
In Section~\ref{notation}, we first present some necessary preliminaries.
Section~\ref{s:two-key-ideas} explains the
intermediate generating function~$S^L(\p)(\xi)$ in more detail.  Then we show how to
use a grading of~$S(\p)(\xi)$ to extract the highest degree coefficients of the
weighted Ehrhart polynomial in the case of a lattice polytope.  This motivates
the approximation results for generating functions.
In Section~\ref{section-simplicial-cone}, 
we give  a simple  
proof of the approximation theorem of \cite{Baldoni-Berline-Vergne-2008}, in the
case of  a simplicial cone (see Theorem \ref{approx-generating-function}). 
The theorem uses the notion of a \emph{patching function} (essentially a 
form of M\"obius inversion formulas described in Subsection
\ref{patchingfun}).  We exhibit an explicit and easily computable such
patching function. 
Using these tools, we show in Section~\ref{section-patchedgenfun-computation}
that the approximation for a cone~$s+\c$ (on the level of generating functions) can be computed
efficiently as a closed formula.  The formula makes the periodic dependence on the
vertex~$s$ explicit.
Finally, in Section~\ref{section-Ehrhartcomputation}, we give the
polynomial-time algorithm to compute the coefficients $E_m(n \bmod q)$ 
as step polynomials.
Our main result (Theorem~\ref{powersoflinear-stepfun}) says that, for every
fixed number~$k_0$, there exists a polynomial-time algorithm that, given a simple
polytope $\p$ of arbitrary dimension, a linear form $\ell\in V^*$, a
nonnegative integer~$M$, computes the highest $k_0+1$ coefficients
$E_{M+d-k_0},\dots,E_{M+d}$ of the weighted Ehrhart quasi-polynomial
$S(n\p,h=\ell^M)$ 
in the form of step polynomials.

Four comments are in order about the applicability and potential practicality of the main results: First, although the weight~$h$ used in
Theorem \ref{powersoflinear-stepfun} is a power of a linear form, as is carefully explained in \cite{baldoni-berline-deloera-koeppe-vergne:integration},  
one can obtain similar complexity of computation for polynomials that depend on a fixed number of variables, or with fixed degree
(Corollary~\ref{ehrhart-barvinok-style-general}). Second, it is also worth noting that  using perturbations (see e.g., \cite{edelsbrunner-perturbation}), 
triangulations \cite{DRStriangbook} or  simplicial cone decompositions of polyhedra (see, e.g., \cite{koeppe:irrational-barvinok}), 
one can extend computations from simple polytopes to arbitrary polytopes. Third, since our approximation is done at the
level of generating functions, it extends the complexity result from \cite{barvinok-2006-ehrhart-quasipolynomial} to the weighted case.
Finally,  at the end of the article we report on experiments using a simple implementation of the algorithm in \emph{Maple}, demonstrating 
it  is competitive with more sophisticated software tools. 
This indicates a potential to use this algorithm for experimentally verifying conjectures on
the positivity of the Ehrhart coefficients of certain polytopes, for examples where the computation of the full Ehrhart polynomials is out of reach.
The algorithms presented here require a rich mixture of computational geometry and  algebraic-symbolic computation.

\section{Preliminaries} \label{notation}

\subsection{Rational convex polyhedra}
We consider  a \emph{rational vector space} $V$ of dimension $d$, that is
to say a finite dimensional real vector space with a lattice denoted
by  $\lattice$.   We will need to consider subspaces and quotient
spaces of $V$, this is why we cannot simply let $V=\R ^d$ and
$\lattice = \Z^d$. 
A point $v\in V$ is called \emph{rational} if there exists a non-zero integer 
$q$ such that $qv\in \lattice$. The set  of rational points
in $V$ is denoted by $V_\Q$.
A subspace $L$ of $V$ is called
\emph{rational} if $L\cap \lattice $ is a lattice in $L$. If $L$ is a
rational subspace, the image of $\lattice$ in $V/L$ is a lattice in
$V/L$, so that $V/L$ is a rational vector space. The image of
$\lattice$ in $V/L$ is called the \emph{projected lattice}.
A rational space $V$, with lattice $\lattice$, has a canonical
Lebesgue measure $\mathrm dx = \mathrm dm_\Lambda(x)$, for which  $V/\lattice$ has measure~$1$.

A \emph{convex rational polyhedron} $\p$ in $V$ (we will simply say
 \emph{polyhedron}) is, by definition, the intersection of a finite number of
closed half spaces bounded by  rational affine hyperplanes. We say
that  $\p$ is \emph{full-dimensional} (in $V$) if the affine span of $\p$ is $V$.

In this article, a \emph{cone} is a polyhedral cone (with vertex $0$) and
an affine cone is a translated set $s+\c$ of a cone $\c$. A  cone
$\c$ is called \emph{simplicial} if it is  generated by independent
elements of $V $. A simplicial cone~$\c$ is called \emph{unimodular} if it
is generated by independent integral vectors $v_1,\dots, v_k$ such
that $\{v_1,\dots, v_k\}$ can be completed to an integral basis of
$V$. An affine cone~$\a$ is called simplicial (respectively, simplicial
unimodular) if the associated cone is.  A \emph{polytope} $\p$ is a  compact polyhedron.
 The set of vertices of $\p$ is denoted by $\CV(\p)$.
 For each vertex $s$, the cone of feasible directions at $s$  is denoted by
$\c_s$. For details in all these notions see, e.g., \cite{barvinokzurichbook}.

\pagebreak[5]

\subsection{Generating functions: Exponential sums and integrals}
\begin{definition}
We denote by $\CH(V^*)$ the ring of holomorphic functions defined
around $0\in V^*$. We denote by $\CM(V^*)$ the ring of meromorphic
functions defined around $0\in V^*$  and by $\CM_{\ell}(V^*)\subset
\CM(V^*)$ the subring consisting of  meromorphic functions
$\phi(\xi)$ which can be written as a quotient of a holomorphic
function and a product of linear forms.
\end{definition}
This paper relies on the study of important examples
of functions in $\CM_{\ell}(V^*)$, the
following  continuous and discrete generating functions $I(\p,\Lambda)$ and
$S(\p,\Lambda)$ associated to a convex polyhedron $\p$. Both have an important
additivity property which makes them valuations (see \cite[Chapter
8]{barvinokzurichbook} or the survey \cite{BarviPom} for a detailed presentation, here
we summarize the essentials).  

\begin{definition}
Let $M$ be a vector space. A \emph{valuation} $F$ is a  map
from the set of polyhedra $ \p\subset V$ to the vector space $M$
such that whenever the indicator functions $\charfun{\p_i}$ of a
family of polyhedra $\p_i$ satisfy a linear relation $\sum_i r_i
\charfun{\p_i}=0$, then the elements $F(\p_i)$ satisfy the same
relation
$ \sum_i r_i F(\p_i)=0$.
\end{definition}
\begin{proposition}\label{valuationI}
There exists a unique valuation  $I(\cdot,\Lambda)$ which  associates  to every polyhedron
$\p\subset V$ a meromorphic function $I(\p,\Lambda)\in
\CM_{\ell}(V^*)$, so that the following properties hold:

\begin{enumerate}[\rm(i)]
\item 
 If the polyhedron $\p$ is not full-dimensional or if $\p$ contains a straight line,
  then $I(\p,\Lambda)=0$.
\item If $\xi\in V^*$ is such that $\e^{\la \xi,x\ra}$ is integrable over $\p$,
  then
$$
I(\p,\Lambda)(\xi)= \int_\p \e^{\la \xi,x\ra} \,\mathrm dm_\Lambda(x).
$$

\item For every point $s\in V_{\Q}$, one has
$$
I(s+\p,\Lambda)(\xi) = \e^{\la \xi,s\ra}I(\p, \Lambda)(\xi).
$$
\end{enumerate}
\end{proposition}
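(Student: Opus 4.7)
My plan is to establish existence by writing down $I$ explicitly on simplicial affine cones, extending by additivity via a Brion-type decomposition, and then reading off uniqueness from the fact that properties (i)--(iii) already pin down the values on those cones.

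For a simplicial affine cone $\a=s+\c$ with $\c=\sum_{i=1}^{d}\R_{\geq 0}v_{i}$ generated by linearly independent rational vectors $v_{1},\dots,v_{d}$, direct integration of $\e^{\la\xi,x\ra}$ over $\a$ in the open cone $\{\xi\in V^{*}:\la\xi,v_{i}\ra<0\text{ for all }i\}$ yields
$$
\int_{\a}\e^{\la\xi,x\ra}\,\mathrm dm_{\Lambda}(x)=\e^{\la\xi,s\ra}\,\frac{|{\det}(v_{1},\dots,v_{d})|}{\prod_{i=1}^{d}(-\la\xi,v_{i}\ra)},
$$
where the determinant is taken with respect to a basis of $\Lambda$. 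The right-hand side is manifestly an element of $\CM_{\ell}(V^{*})$, and I take this as the definition of $I(\a,\Lambda)(\xi)$. On any polyhedron that is not full-dimensional or that contains a line, I declare $I(\cdot,\Lambda)=0$. A general full-dimensional pointed rational polyhedron $\p$ is handled by triangulating each tangent cone $\c_{s}$ ($s\in\CV(\p)$) into simplicial cones $\tau$ (with a generic direction so that lower-dimensional overlaps are controlled by inclusion--exclusion signs $\epsilon_{\tau}$), and setting
$$
I(\p,\Lambda)(\xi)=\sum_{s\in\CV(\p)}\sum_{\tau\subset\c_{s}}\epsilon_{\tau}\,I(s+\tau,\Lambda)(\xi).
$$
Unbounded polyhedra are treated in the same way, using all tangent cones at faces that are vertices of the recession-free quotient, with tangent cones containing lines giving zero contribution.

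The main obstacle is showing that this prescription is independent of the chosen triangulations and that the resulting map is truly a valuation, i.e.\ respects every linear relation $\sum_{i}r_{i}\charfun{\p_{i}}=0$. The cleanest route I would take is the strategy of Lawrence and of Khovanskii--Pukhlikov: on the common open set of $\xi$ where all the integrals converge, the map $\p\mapsto\int_{\p}\e^{\la\xi,x\ra}\,\mathrm dm_{\Lambda}$ is tautologically a valuation in the vector space of convergent holomorphic functions; the meromorphic identity principle then promotes this to a valuation relation in $\CM_{\ell}(V^{*})$, provided one checks that cones containing a line are sent to $0$. The latter is seen by integrating the line direction out first: after a rational change of variable, the exponential integral factors as a divergent constant times a meromorphic function that is identically zero in the remaining variables.

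Once the valuation property is in hand, properties (i)--(iii) are immediate. Property~(i) holds by definition. Property~(iii) follows from the explicit prefactor $\e^{\la\xi,s\ra}$ in the simplicial case together with the valuation property. Property~(ii) holds because, on the region of convergence for $\p$, Brion's theorem guarantees that all cone terms in the decomposition are simultaneously convergent and sum to $\int_{\p}\e^{\la\xi,x\ra}\,\mathrm dm_{\Lambda}$. Finally, uniqueness is forced: any valuation $F$ with properties (i)--(iii) must agree with $I(\cdot,\Lambda)$ on a full-dimensional simplicial affine cone (property (ii) determines $F$ on the nonempty convergence region, and meromorphic continuation to $\CM_{\ell}(V^{*})$ is unique); Brion's decomposition together with~(i) then propagates the agreement to every rational polyhedron.
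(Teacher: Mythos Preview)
The paper does not give its own proof of this proposition; it is stated as a known result, with a pointer to Barvinok's book and the Barvinok--Pommersheim survey in the sentence immediately preceding it. So there is no argument in the paper to compare your proposal against; your sketch is in fact along the lines of what those references do.

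That said, one step in your outline is a genuine gap. You argue that ``on the common open set of $\xi$ where all the integrals converge'' the assignment $\p\mapsto\int_{\p}\e^{\la\xi,x\ra}\,\mathrm dm_{\Lambda}$ is tautologically a valuation, and then invoke the identity principle. The difficulty is that for a general linear relation $\sum_{i} r_{i}\charfun{\p_{i}}=0$ among rational polyhedra (even among simplicial cones with a common apex), the convergence regions of the individual integrals can be pairwise disjoint, so there need not be any nonempty open set on which they all converge simultaneously; the identity principle then has nothing to bite on. The standard repairs are either (a) to prove the valuation identity first for \emph{polytopes}, where the integral converges for every $\xi$, and then reduce an arbitrary relation among polyhedra to polytope relations modulo polyhedra with lines (this is essentially the Barvinok--Pommersheim route), or (b) to invoke a Groemer-type extension theorem for valuations. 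Your treatment of polyhedra containing a line also needs rewording: the integral there is honestly divergent, so one cannot ``factor out a divergent constant times something identically zero''; rather, one \emph{defines} $I=0$ on such polyhedra and checks consistency by splitting along the line $\R v$ into two pointed half-cones whose explicit rational contributions $(-1)/\la\xi,v\ra$ and $(-1)/\la\xi,-v\ra$ cancel. With these two points addressed, your existence and uniqueness arguments go through as you describe.
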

We will call $I(\p,\Lambda)(\xi)$ the \emph{continuous generating function} of~$\p$.
\begin{proposition}\label{Svaluation}
There exists a unique valuation  $S(\cdot,\Lambda)$ which  associates to every polyhedron
$\p\subset V$ a meromorphic function $S(\p,\Lambda)\in
\CM_{\ell}(V^*)$, so that the following properties hold:

\begin{enumerate}[\rm(i)]
\item If $\p$ contains a straight line, then $S(\p,\Lambda)=0$.

\item If $\xi\in V^*$ is such that $\e^{\la \xi,x\ra}$ is summable over the set
  of lattice points of $\p$, then
$$
S(\p,\Lambda)(\xi)= \sum_{x\in \, \p\cap \lattice} \e^{\la \xi,x\ra} .
$$

\item For every point $s\in\lattice $, one has
$$
S(s+\p,\Lambda)(\xi) = \e^{\la \xi,s\ra}S(\p,\Lambda)(\xi).
$$
\end{enumerate}
\end{proposition}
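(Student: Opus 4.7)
The plan is to mirror the strategy used for Proposition \ref{valuationI}, reducing everything to the case of simplicial affine cones with rational apex and then assembling via Brion's theorem. The proof naturally splits into uniqueness, explicit construction, and verification of the valuation property.

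For uniqueness, suppose $S$ satisfies (i)--(iii) and the valuation property. By Brion's theorem, the indicator function of a polyhedron $\p$ decomposes modulo indicator functions of polyhedra containing a straight line as $\charfun{\p} \equiv \sum_{s\in\CV(\p)} \charfun{s+\c_s}$. Property (i) together with the valuation property then forces $S(\p,\Lambda) = \sum_{s} S(s+\c_s,\Lambda)$. Applying Barvinok's signed decomposition to each tangent cone, and using property (iii) to translate the apex to a lattice point (after refining to a sublattice where needed), reduces the question to a unimodular cone with apex in~$\Lambda$; on such a cone property (ii) pins down the value in the region of convergence, hence everywhere by meromorphic continuation.

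For existence, I would define $S$ on a simplicial affine cone $s+\c$, where $s\in V_\Q$ and $\c$ is generated by primitive lattice vectors $v_1,\dots,v_k$, by the closed formula
$$
S(s+\c,\Lambda)(\xi) := \Bigl(\sum_{x \in (s+\Pi)\cap \Lambda} \e^{\la\xi,x\ra}\Bigr)\prod_{i=1}^{k}\frac{1}{1-\e^{\la\xi,v_i\ra}},
$$
with $\Pi=\sum_i [0,1)v_i$ the semi-open fundamental parallelepiped. Each factor $1-\e^{\la\xi,v_i\ra}$ is $-\la\xi,v_i\ra$ times a holomorphic unit at $\xi=0$, so the right-hand side lies in $\CM_\ell(V^*)$; summing a geometric series shows agreement with the convergent sum of (ii), and the definition is manifestly equivariant under lattice shifts of~$s$, giving (iii). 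One then extends $S$ to arbitrary polyhedra without straight lines by the Brion-type formula $S(\p,\Lambda) := \sum_{s\in\CV(\p)} S(s+\c_s,\Lambda)$, after verifying that this value is independent of the triangulation of each tangent cone into simplicial pieces; for polyhedra containing a line one sets $S=0$, which is consistent because any lattice direction~$v$ in the line yields $S(\p,\Lambda)(\xi) = \e^{\la\xi,v\ra} S(\p,\Lambda)(\xi)$ via (iii), forcing $S(\p,\Lambda)=0$.

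The main obstacle is proving that the formula above indeed defines a valuation, that is, respects \emph{every} linear relation among indicator functions and not merely those produced by Brion's decomposition. This is the classical Lawrence--Khovanskii--Pukhlikov--Brion theorem, and I would settle it by one of two standard routes: either (a) reduce the discrete identity to the already-granted valuation property of the continuous generating function $I(\cdot,\Lambda)$ of Proposition~\ref{valuationI} via an Euler--Maclaurin expansion, handling the boundary correction as a valuation on lower-dimensional faces by induction on dimension; or (b) use polar duality, expressing $S(s+\c,\Lambda)$ as an iterated residue at the apex and invoking the valuation property of the polar decomposition of cones. Either route is technical but routine in the literature; the conceptual content is carried by the explicit formula on simplicial affine cones.
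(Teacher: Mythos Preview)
The paper does not prove this proposition. It is quoted as a known result, with references to Barvinok's book and the Barvinok--Pommersheim survey (and the analogous Proposition~\ref{valuationSL} is later referred to Theorem~3.1 of the same survey). So there is no paper-internal argument to compare against.

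Your sketch is broadly in the spirit of those references, but a few points need care. First, watch the logical order: in the paper Brion's formula (Theorem~\ref{brion}) is stated as a \emph{consequence} of the valuation property, so invoking it for uniqueness is circular as written; what you actually need is the purely combinatorial Brianchon--Gram/Lawrence--Varchenko identity among indicator functions, which is independent of~$S$. Second, the phrase ``refining to a sublattice where needed'' is not meaningful here---$\Lambda$ is fixed---and is in any case unnecessary: on a pointed affine cone the series in~(ii) converges for $\xi$ in the interior of the dual cone and already pins down $S(s+\c,\Lambda)$ by analytic continuation, whether or not $s\in\Lambda$. Third, your route~(a) to the valuation property, reducing to Proposition~\ref{valuationI} via Euler--Maclaurin, is nonstandard and would be hard to make rigorous without essentially rebuilding the theory; the cited references instead observe that $\p\mapsto\sum_{x\in\p\cap\Lambda}\e^{\la\xi,x\ra}$ is trivially a valuation on \emph{polytopes} (check the indicator relation pointwise on~$\Lambda$) and then extend uniquely to all rational polyhedra by a Lawrence/Khovanskii--Pukhlikov argument. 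That extension step is where the genuine content lies, and it is not really recoverable from your outline as stated.
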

 $S(\p,\Lambda)(\xi)$ is called the \emph{(discrete) generating function} of $\p$.\smallbreak

\subsection{Brion's theorem}
A consequence of the valuation property is the following fundamental
theorem. It follows from the Brion-Lawrence-Varchenko decomposition of a
polyhedron into the supporting cones at its vertices \cite{Brion88,barvinokzurichbook};
see also \cite{brionvergne1997}, Proposition 3.1,  for a more
general Brianchon-Gram type identity.
\begin{theorem} \label{brion}Let $\p$ be a polyhedron with set of vertices $\CV(\p)$. For each
vertex~$s$, let $\c_s$ be the cone of feasible directions at $s$.
Then
$$
S(\p,\Lambda)=\sum_{s\in \CV(\p)}S(s+\c_s,\Lambda).
$$
\end{theorem}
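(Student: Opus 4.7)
The plan is to deduce the formula from a decomposition of the indicator function of $\p$ into indicator functions of tangent cones at all faces, and then to apply the valuation $S(\cdot,\Lambda)$ and invoke its vanishing on polyhedra containing a straight line (Proposition \ref{Svaluation}(i)).

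First, I would invoke the Brianchon--Gram identity
$$
\charfun{\p} \;=\; \sum_{\emptyset \neq F \subseteq \p} (-1)^{\dim F}\,\charfun{T_F(\p)},
$$
where $F$ ranges over the nonempty faces of $\p$ and $T_F(\p)$ denotes the tangent cone of $\p$ along $F$. This identity can be verified pointwise: for fixed $x \in V$, the collection of faces $F$ with $x \in T_F(\p)$ carries the structure of the face poset of an auxiliary polytope (obtained by slicing $\p$ near $x$), so the alternating sum reduces to a standard Euler-characteristic computation that evaluates to $1$ when $x\in\p$ and to $0$ otherwise.

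Next, I would apply the valuation $S(\cdot,\Lambda)$ term by term. By the valuation axiom we obtain
$$
S(\p,\Lambda) \;=\; \sum_{\emptyset \neq F \subseteq \p} (-1)^{\dim F}\, S(T_F(\p),\Lambda).
$$
Whenever $\dim F \geq 1$, the tangent cone $T_F(\p)$ contains the affine hull of $F$, hence a straight line, so $S(T_F(\p),\Lambda) = 0$ by Proposition \ref{Svaluation}(i). The only surviving contributions are from $F=\{s\}$ with $s\in\CV(\p)$, in which case $T_{\{s\}}(\p) = s + \c_s$ and the sign is $+1$. This yields exactly the claimed identity.

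The main obstacle will be justifying the Brianchon--Gram identity itself; it is classical, but the Euler-characteristic argument requires care to check that the face poset of $\{F : x \in T_F(\p)\}$ has the expected combinatorial type, and in the unbounded case one must verify that recession directions do not spoil the counting. An alternative route that sidesteps this is the Lawrence--Varchenko polarization: for a generic covector $\eta\in V^*$ one writes $\charfun{\p}$ directly as a signed sum of indicator functions of flipped vertex cones $s+\c_s^\eta$, modulo indicator functions of line-containing cones; applying $S(\cdot,\Lambda)$ and using the meromorphic identity $S(s+\c_s^\eta,\Lambda) = (-1)^{\sigma_\eta(s)} S(s+\c_s,\Lambda)$ to undo the flips again recovers Brion's formula.
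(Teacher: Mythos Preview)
Your proposal is correct and matches the approach the paper itself indicates: the paper does not give a self-contained proof of Theorem~\ref{brion} but states it as a consequence of the valuation property together with the Brion--Lawrence--Varchenko decomposition, and explicitly points to a Brianchon--Gram type identity (citing \cite{Brion88,barvinokzurichbook,brionvergne1997}). Your argument---apply $S(\cdot,\Lambda)$ to the Brianchon--Gram alternating sum and kill all non-vertex terms via Proposition~\ref{Svaluation}(i)---is exactly that route, and your alternative via Lawrence--Varchenko polarization is the other one the paper alludes to.
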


\subsection{Notations and basic facts in the case of  a simplicial
cone}\label{simplicial}

For all of the notions below see \cite{barvinokzurichbook}.
 Let $v_i\in\Lambda,i=1,\dots, d$ be linearly independent integral
vectors and let $\c=\sum_{i=1}^d \R_+ v_i$ be the cone they span.

\begin{definition}
The \emph{fundamental parallelepiped} $\b$ of the cone (with respect to
the generators  $v_i,i=1, \dots,d$) is the set
\begin{equation*}\label{cell}
\b=\sum_{i=1}^d [0,1\mathclose[\, v_i.
\end{equation*}
\end{definition}
Note that the set has a half-open boundary. We immediately have:
\begin{lemma}\label{lemma-integral}
  Let $s\in V$. 
  Then
  \begin{equation}\label{I}
    I(s+\c,\Lambda)(\xi)=\e^{\la \xi,s \ra} \frac{(-1)^d
      \vol_\Lambda(\b)} {\prod_{i=1}^d\la \xi,v_i\ra},
  \end{equation}
  where $\vol_\Lambda(\b)$ is the volume of the fundamental parallelepiped
  with respect to the Lebesgue measure $\mathrm dm_\Lambda$ defined by the
  lattice.
\end{lemma}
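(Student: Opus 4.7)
The plan is to combine the translation-equivariance property (Proposition~\ref{valuationI}(iii)) with a direct integral computation in a region of convergence, then extend the identity to all $\xi$ by meromorphic continuation.

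First, by Proposition~\ref{valuationI}(iii) applied to $\c$ (which is a polyhedron containing $0$), we have $I(s+\c,\Lambda)(\xi) = \e^{\la \xi,s\ra} I(\c,\Lambda)(\xi)$, so it is enough to prove the formula for $s=0$. Next, choose $\xi \in V^*$ in the open region where $\Re \la \xi, v_i \ra < 0$ for $i=1,\dots,d$; on this region $x \mapsto \e^{\la \xi, x\ra}$ is integrable on $\c$, so Proposition~\ref{valuationI}(ii) lets us compute $I(\c,\Lambda)(\xi)$ as an honest integral.

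Now I would parametrize $\c$ by the linear isomorphism $\phi\colon \R_+^d \to \c$, $\phi(t_1,\dots,t_d) = \sum_{i=1}^d t_i v_i$. Under this change of variables the Lebesgue measure $\mathrm dm_\Lambda$ pulls back to $\vol_\Lambda(\b)\,\mathrm dt_1\cdots\mathrm dt_d$, since the image of the unit cube is exactly $\b$ and $\phi$ is linear. This yields
\begin{equation*}
I(\c,\Lambda)(\xi) = \vol_\Lambda(\b) \int_0^\infty \!\!\cdots\! \int_0^\infty \e^{\sum_i t_i \la \xi, v_i\ra}\,\mathrm dt_1\cdots\mathrm dt_d = \vol_\Lambda(\b)\prod_{i=1}^d \frac{-1}{\la \xi, v_i\ra},
\end{equation*}
which is the desired closed form, up to the sign $(-1)^d$ collected from the $d$ factors.

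Finally, both sides of the claimed identity are elements of $\CM_\ell(V^*)$: the left-hand side by Proposition~\ref{valuationI}, the right-hand side visibly so. Since they agree on a non-empty open subset of $V^*$, they agree as meromorphic functions, and the formula holds for all $\xi$ in the domain of definition. Multiplying by $\e^{\la \xi, s\ra}$ recovers the general case $s \neq 0$. I do not anticipate any genuinely hard step; the only subtlety is bookkeeping of the Lebesgue measure $\mathrm dm_\Lambda$ under the linear change of variables, which is why $\vol_\Lambda(\b)$ (rather than an absolute determinant) appears as the Jacobian factor.
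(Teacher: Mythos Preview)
Your proof is correct; the paper does not actually supply a proof of this lemma (it is introduced with ``We immediately have:''), and your argument---reduce to $s=0$ via Proposition~\ref{valuationI}(iii), compute the integral on the region $\Re\la\xi,v_i\ra<0$ by the linear change of variables $t\mapsto\sum_i t_i v_i$, then identify both sides as elements of $\CM_\ell(V^*)$---is exactly the standard way to justify it. One cosmetic remark: Proposition~\ref{valuationI}(iii) is stated only for $s\in V_\Q$, while the lemma is phrased for $s\in V$; this is an inconsistency in the paper's statement rather than in your argument, and in any case the integral computation you give works directly for arbitrary~$s$ without invoking~(iii).
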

If $V=\R^d$ and $\Lambda=\Z^d$, then $\vol_\Lambda(\b) = \mathopen|\det(v_1,\dots,v_d)|$, and so
\begin{equation}\label{I-with-det}
  I(s+\c,\Z^d)(\xi)=\e^{\la \xi,s \ra} \frac{(-1)^d
    \mathopen| \det(v_1,\dots,v_d)|} {\prod_{i=1}^d\la \xi,v_i\ra}.
\end{equation}
We also recall the following elementary but  crucial lemma.
\begin{lemma}\label{lemma-cell}
  \begin{enumerate}[\rm(i)]
  \item  The affine cone $(s+\c)\,\cap \lattice$ is the disjoint union
    of the translated parallelepipeds $s +\b + v$, for $v\in \sum_{j=1}^d \N v_j$.

  \item  The set of lattice points in the affine cone $s+\c$ is the
    disjoint union of the sets $x+\sum_{i=1}^d \N v_i$ when $x$ runs over the
    set $(s+\b)\,\cap \lattice$.

  \item  The number of lattice points in the parallelepiped $s+\b$ is equal to
    the volume of the parallelepiped with respect to the Lebesgue measure
    $\mathrm dm_\Lambda$ defined by the
    lattice, that is
    $$ \card((s+\b)\,\cap \lattice)= \vol_\Lambda(\b).$$
    In particular, when $V=\R^d$ and $\Lambda=\Z^d$, then
    $$ \card((s+\b)\,\cap \Z^d) = \mathopen|\det(v_1,\dots,v_d)|.
 $$
\end{enumerate}
\end{lemma}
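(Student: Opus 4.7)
The plan rests on one simple input: since $v_1,\dots,v_d$ are linearly independent, they form a basis of $V$, and every vector has unique real coordinates in that basis. I read part~(i) as the tiling statement for the affine cone $s+\c$ itself (the intersection with $\Lambda$ indicated in the statement looks like a typo; if one keeps it, the argument below restricts automatically to lattice points). Given $p\in s+\c$, I would write $p-s=\sum_i t_i v_i$ with $t_i\ge 0$, and then decompose each coordinate uniquely as $t_i=n_i+\tau_i$ with $n_i=\lfloor t_i\rfloor\in\N$ and $\tau_i\in[0,1)$. Setting $v:=\sum_i n_i v_i\in\sum_j\N v_j$, I get $p\in s+\b+v$. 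Uniqueness of the basis coordinates, combined with uniqueness of the floor/fractional-part decomposition, then yields disjointness.

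For part~(ii), I would combine (i) with the fact that $\sum_i\N v_i\subset\Lambda$ (since each $v_i\in\Lambda$). If $p\in(s+\c)\cap\Lambda$, then by (i) there are unique $x\in s+\b$ and $v\in\sum_i\N v_i$ with $p=x+v$; since $v\in\Lambda$, we obtain $x=p-v\in\Lambda$, hence $x\in(s+\b)\cap\Lambda$. Conversely, any $x+w$ with $x\in(s+\b)\cap\Lambda$ and $w\in\sum_i\N v_i$ obviously belongs to $(s+\c)\cap\Lambda$, and disjointness of the family $\{\,x+\sum_i\N v_i\,\}_{x\in(s+\b)\cap\Lambda}$ is inherited from (i).

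For part~(iii), the key point is that the half-open parallelepiped $\b$ is a strict fundamental domain for the sublattice $\Lambda':=\sum_i\Z v_i\subseteq\Lambda$: the translates $\b+\lambda'$ for $\lambda'\in\Lambda'$ tile $V$ without overlap, precisely because of the $[0,1)$ convention in the definition of $\b$. Any translate $s+\b$ is then also a strict $\Lambda'$-fundamental domain, so the lattice points of $\Lambda$ lying in $s+\b$ are in canonical bijection with the coset space $\Lambda/\Lambda'$, giving $\card((s+\b)\cap\Lambda)=[\Lambda:\Lambda']$. Finally, this index equals $\vol_\Lambda(\b)$ because $\mathrm dm_\Lambda$ is normalized so that $V/\Lambda$ has measure $1$, and a $\Lambda'$-fundamental domain has measure equal to the index $[\Lambda:\Lambda']$ in those units.

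There is no real obstacle in any of this; the only point that wants care is the half-open convention on $\b$, which is precisely what converts the coverings in~(i) and~(iii) into disjoint unions and makes the lattice-point count exact rather than an inequality.
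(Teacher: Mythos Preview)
Your argument is correct, and it is the standard elementary one. In fact the paper does not give a proof of this lemma at all: it is introduced with the words ``We also recall the following elementary but crucial lemma'' and stated without proof, with a reference to \cite{barvinokzurichbook} for background. So there is nothing to compare against; your write-up supplies exactly the kind of verification the authors are taking for granted.

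Your reading of part~(i) is also the right one: as stated, the left-hand side is a set of lattice points while the right-hand side is a union of full-dimensional half-open boxes, so the ``$\cap\,\Lambda$'' is indeed a slip. Your tiling argument for $s+\c$ via floor/fractional parts in the basis $v_1,\dots,v_d$ is precisely what is needed, and it restricts to lattice points without change, giving~(ii) immediately. For~(iii), recognizing $s+\b$ as a strict fundamental domain for the sublattice $\Lambda'=\sum_i\Z v_i$ and identifying $\card((s+\b)\cap\Lambda)=[\Lambda:\Lambda']=\vol_\Lambda(\b)$ is exactly the right mechanism, and your remark that the half-open convention on~$\b$ is what makes all of this exact is the one point worth emphasizing.
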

\medskip

The study of  the generating function $S(s+\c,\Lambda)(\xi)$ of the affine cone $s+\c$
will be a crucial tool. It relies on expressing
$S(s+\c,\Lambda)(\xi)$ in terms of the  generating function
 $S(s+\b,\Lambda)(\xi)=\sum_{x\in (s+\b)\, \cap \lattice}\e^{\la \xi,x
\ra}$ of the fundamental parallelepiped.  Lemma \ref{lemma-cell} (ii)
immediately gives:
\begin{lemma}\label{lemma-cell-generating_function}
\begin{equation}\label{Sbox}
S(s+\c,\Lambda)(\xi)=S(s+\b,\Lambda)(\xi) \frac{1}{\prod_{j=1}^d(1- \e^{\la \xi,v_j\ra})}.
\end{equation}
\end{lemma}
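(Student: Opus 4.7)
The plan is to derive the formula directly from the lattice-point decomposition in Lemma~\ref{lemma-cell}(ii), compute the resulting exponential sum as a product of geometric series in a region of absolute convergence, and then invoke meromorphic continuation.

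First I would choose a $\xi\in V^*$ in the open cone
$\{\xi : \operatorname{Re}\la\xi,v_j\ra<0 \text{ for all } j=1,\dots,d\}$,
which is nonempty because $v_1,\dots,v_d$ are linearly independent. For such $\xi$, the function $x\mapsto \e^{\la\xi,x\ra}$ is absolutely summable over $(s+\c)\cap\Lambda$, so property~(ii) of Proposition~\ref{Svaluation} applies and $S(s+\c,\Lambda)(\xi)$ is literally the convergent sum
$\sum_{x\in(s+\c)\cap\Lambda}\e^{\la\xi,x\ra}$.

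Next, I would use Lemma~\ref{lemma-cell}(ii) to write this as an iterated sum over the disjoint union
\[
(s+\c)\cap\Lambda \;=\; \bigsqcup_{x\in(s+\b)\cap\Lambda}\Bigl(x+\textstyle\sum_{j=1}^d \N v_j\Bigr).
\]
Absolute convergence lets me rearrange, yielding
\[
S(s+\c,\Lambda)(\xi)=\sum_{x\in(s+\b)\cap\Lambda}\e^{\la\xi,x\ra}\sum_{n_1,\dots,n_d\ge 0}\prod_{j=1}^d \e^{n_j\la\xi,v_j\ra}.
\]
The inner multi-sum factors as a product of $d$ geometric series, each equal to $1/(1-\e^{\la\xi,v_j\ra})$ since $\operatorname{Re}\la\xi,v_j\ra<0$. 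The remaining sum over $x$ is by definition $S(s+\b,\Lambda)(\xi)$ (the parallelepiped $s+\b$ being bounded, no convergence condition is needed). This establishes the identity pointwise on an open set.

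Finally, both sides of the claimed identity lie in $\CM_\ell(V^*)$: the left-hand side by Proposition~\ref{Svaluation}, and the right-hand side because $S(s+\b,\Lambda)(\xi)$ is a finite exponential sum (hence holomorphic) and the denominators are products of expressions of the form $1-\e^{\la\xi,v_j\ra}$, which are invertible in a small enough punctured neighborhood and can be expressed as holomorphic functions times linear forms $\la\xi,v_j\ra$ near $0$. Since the two meromorphic functions agree on a nonempty open subset of $V^*$, they agree as elements of $\CM_\ell(V^*)$. There is no real obstacle here; the only subtlety to make explicit is the convergence-then-continuation step, which ensures the equality holds as an identity of meromorphic functions rather than merely on the domain of absolute summability.
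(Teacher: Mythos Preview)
Your argument is correct and follows exactly the route the paper indicates: the paper simply says ``Lemma~\ref{lemma-cell}(ii) immediately gives'' the formula, and you have spelled out precisely that derivation---decomposing the lattice points via Lemma~\ref{lemma-cell}(ii), summing the resulting geometric series on a region of absolute convergence, and then extending by meromorphic continuation. The only difference is that you have made explicit the convergence and identity-theorem steps that the paper leaves implicit.
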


{\small
\begin{example}
  Consider the case where $V=\R$ and $\lattice=\Z$. Let $\c=\R_+$.
  Let $s\in \R$, then the ``fractional part'' $\{s\}\in [0,1[$ is defined as
  the unique real number  such that $s-\{s\}\in \Z$. 
  Then the unique integer~$\bar s$ in~$s+\b$ is 
  $ %%\lceil s\rceil =  %%% not defined...
  s+ \{-s\}$, and
  so equations \eqref{I} and \eqref{Sbox} give
  $$I(s+\c,\Z)(\xi)=\e^{\xi s}\frac{-1}{\xi}\quad\text{and}\quad
  S(s+\c,\Z)(\xi)= \e^{\xi (s+ \{-s\})}\frac{1}{1-\e^{\xi}}.$$
\end{example}}

\section{Key ideas of the approximation theory}
\label{s:two-key-ideas}

\subsection{Weighted Ehrhart quasi-polynomials}\label{Weighted Ehrhart quasi-polynomials}

Let $\p\subset V $ be a rational polytope and let $h(x)$ be a
polynomial function of degree $M$ on $V$.  We consider the following
weighted sum over the set of lattice points of $\p$,
$$
\sum_ {x\in \p\cap \lattice}h(x).
$$
When $\p$ is dilated by a non-negative integer $n\in \N$, we obtain the
weighted Ehrhart quasi-polynomial of the pair $(\p,h)$.
\begin{definition}
  Let $q$ be the smallest positive integer such
  that $q\p$ is a lattice polytope.  The we define the Ehrhart
  quasi-polynomial $ E(\p,h; n)$ and its coefficients $E_m(\p,
  h; n\bmod q)$ by
  $$
  E(\p,h; n) = \sum_ {x\in n\p\cap \lattice }h(x)= \sum_{m=0}^{d+ M} E_m(\p, h; n\bmod q) \, n ^m.
  $$
\end{definition}
We note that the coefficients $E_m$ depend on $n$, but they actually depend only on
$n\bmod q $, where  $q$ is the smallest positive integer such
that $q\p$ is a lattice polytope. If  $h(x) $ is homogeneous of
degree $M$, the highest degree coefficient $E_{d+M}$ is equal to the
integral $\int_\p h(x) \,\mathrm dx$ (see
\cite{baldoni-berline-deloera-koeppe-vergne:integration} and references
therein). \smallbreak

We concentrate on the special case where the polynomial $h(x)$ is a
power of a linear form
$$
h(x)=\frac{\langle\xi,x\rangle^M}{M!}.
$$
This is not a restriction because any polynomial can be written as a linear combination of
powers of linear forms.  In fact, as discussed in~\cite{baldoni-berline-deloera-koeppe-vergne:integration}, whenever the
polynomial~$h(x)$ is either of fixed degree or only depends on a fixed number
of variables (possibly after a linear change
of variables), then only a
polynomial number of powers of linear forms are needed, and such a
decomposition can be computed in polynomial time.
We introduce the following notation.

\begin{definition}\label{power-linear_form_top_Ehrhart}
  Let $q$ be as above. We define  the Ehrhart quasi-polynomial $E(\p,\xi, M; n)$ and the  coefficients $E_m(\p, \xi,M; n \bmod q)$ for $m=0,\ldots, M+d$ by
  $$
  E(\p,\xi, M; n) = 
  \sum_ {x\in n\p\cap \lattice} \frac{\langle\xi,x\rangle^M}{M!}=
  \sum_{m=0}^{M+d}E_m(\p, \xi,M; n\bmod q)\,n^{m}.
  $$
\end{definition}

It will be convenient in this paper to introduce the following
notations.
For a positive integer $q\in \N$ and a real number ~$n\in\R$, we write
\begin{displaymath}
  \lfloor n\rfloor_q := q\bigl\lfloor \tfrac1q n\bigr\rfloor \in q\Z,\quad
  \{n\}_q := (n \bmod q) \in [0,q),
\end{displaymath}
which give the unique decomposition
\begin{displaymath}
  n = \lfloor n\rfloor_q + \{n\}_q.
\end{displaymath}
By $\lfloor n\rfloor := \lfloor n\rfloor_1$ and $\{n\} := \{n\}_1$ we obtain
the ordinary ``floor'' and ``fractional part'' notations.  Finally, $\lceil
n\rceil := -\lfloor -n\rfloor$ is the ``ceiling'' notation.\smallbreak

{\small
\begin{example}
\begin{figure}[t]
  \centering
  \ifpdf
  \input{triangle528.pdf_t}
   \else
   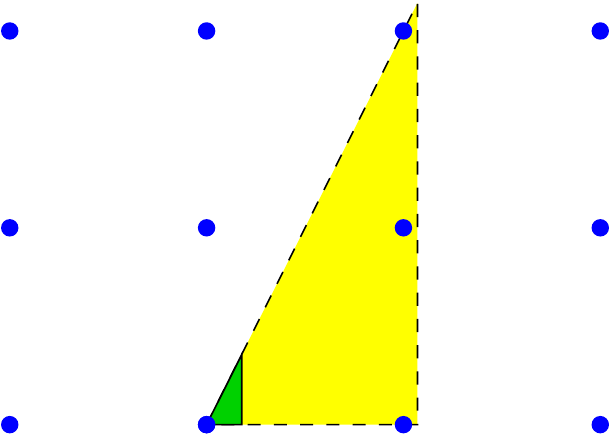
   \fi
  \caption{The example triangle~$\t$ and its dilation~$6\t$}
  \label{fig:triangle528}
\end{figure}
Consider the rational triangle $\t$ with vertices $(0,0)$, $(\frac{5}{28},0)$, and $(\frac{5}{28},\frac{5}{14})$ as shown in Figure \ref{fig:triangle528}.
Let us compute $E(\t,\xi,M;n)$ 
for this small example.
Note that the integer $q$ such that $q\t$ is a lattice polytope is $q=28$.

  In what follows consider powers of the linear form $\xi=x+y$ as weights
for the lattice points. When the power $M=0$,  then we obtain a constant weight
and  the quasi-polynomial $E(\t,\xi,0;n)$   counts the lattice points inside the various dilations of $\t$:  it is given by the following formula:
\[
{\frac {25}{784}}\,{n}^{2}+ \left( -{\frac {5}{392}} \lbrace 5\,n \rbrace_{28} +{\frac {5}{14}} \right) \,n+\left( 1+ {\frac {1}{784}} \, \left( \lbrace 5\,n \rbrace_{28}  \right) ^{2}-\frac{1}{14}\,\lbrace 5\,n\rbrace_{28} \right).
\]
Indeed, when $n=1$ (no dilation) there is only one lattice point and the formula  above reduces to
\[
{\frac {1089}{784}}-{\frac {33}{392}}\,\{5\}_{28} +{\frac {1}{784}}\, \left( \{5\}_{28}  \right) ^{2}= \, {\frac {1089}{784}}-{\frac {33}{392}}\,5 +{\frac {1}{784}}\, 25=1.
\]
When we dilate the same triangle six times, i.e., $n=6$, we obtain four lattice points.
\[
{\frac {841}{196}}-{\frac {29}{196}}\, \lbrace 30 \rbrace_{28} +{\frac {1}{784}}\, (\lbrace 30 \rbrace_{28} ) ^{2}=\frac{841}{196}-\frac{29}{196}\cdot2+\frac{1}{784}\cdot4=4.
 \]
Next let us  take $M=1$, in that case  the lattice point
$(a,b)$ is counted with weight $a+b$. In this case the top coefficient is equal to the integral of the linear form $\xi=x+y$ over $\t$.
The quasi-polynomial $E(\t,\xi,1;n)$ is given by the following formula:
\begin{multline*}
{\frac {125}{16464}}\,{n}^{3}
+\left({-\frac {25}{5488}} \{ 5\,n\}_{28} +{\frac {75}{784}} \right) \,{n}^{2}
+\left( {\frac {5}{5488}} \left(  \{ 5\, n \}_{28}  \right) ^{2} -{\frac {15}{392}}\, \{ 5\,n \}_{28} +{\frac {25}{84}} \right) \, n\\
+\left( -{\frac {1}{16464}}\, \left( \{5\,n\}_{28} \right) ^{3}
 +{\frac {3}{784}}\, \left( \{5\,n\}_{28}  \right) ^{2}
 -{\frac {5}{84}}\, \{5\,n\}_{28} \ \right).
\end{multline*}
Substitute again $n=1$ in the expression, to obtain
\[
{\frac {275}{686}}-{\frac {1685}{16464}}\, \{ 5 \}_{28} +{\frac {13}{2744}}\, \left( \{5\}_{28}  \right) ^{2}-{\frac {1}{16464}}\, \left(  \{ 5 \}_{28} \right) ^{3}=0.
\]
Note that since only the lattice point $(0,0)$ lies within the triangle at $n=1$, the quasipolynomial must evaluate to zero.
\end{example}}

In practice, it is impossible to compute $E(\p,\xi,M;n)$ except when $\p$ is of small dimension (and $M$ relatively small).
Thus we restrict our ambitions:

Let us fix a number $k_0$. Our goal will be to compute the $k_0+1$
highest degree coefficients $E_m(\p, \xi,M; n \bmod q)$, for $m= M+d, \dots,
M+d-k_0$.
We will  be able to give a polynomial time algorithm to do so.

\subsection{Grading of the generating functions}

A key property that we will make use of in this article is the following \emph{grading}
of~$\CM_{\ell}$:  A function $\phi(\xi)\in \CM_{\ell}(V^*)$ has a unique expansion
into homogeneous rational functions
$$
\phi(\xi)= \sum_{m\geq m_0}\phi_{[m]}(\xi),
$$
where the summands $\phi_{[m]}(\xi)$ have degree $m$ as we define now:
If $P$ is a homogeneous polynomial on $V^*$ of degree $p$, and $D$ a
product of $r$ linear forms, then $\frac{P}{D}$ is an element in
$\CM_{\ell}(V^*)$  homogeneous of degree $m=p-r$. For instance,
$\smash{\frac{\xi_1}{\xi_2}}$ is homogeneous of degree $0$. On this example
we observe that a function in $\CM_\ell(V^*)$ which has non-negative
degree terms need not be  analytic.

In particular, consider the generating function $S(s+\c,\Lambda)(\xi)$ for a
simplicial cone~$\c$.  By Lemma~\ref{lemma-cell-generating_function}, 
\begin{equation*}
  S(s+\c,\Lambda)(\xi)=S(s+\b,\Lambda)(\xi) \frac{1}{\prod_{j=1}^d(1- \e^{\la \xi,v_j\ra})}.
\end{equation*}
Thus, $S(s+\c,\Lambda)\in \CM_{\ell}(V^*)$, and so it admits a
decomposition into homogeneous components:
\begin{lemma}\label{lemma-cell-generating_function-grading}
  \begin{equation}\label{homogeneous-components}
    S(s+\c,\Lambda)( \xi)=  S(s+\c,\Lambda)_{[-d]}(\xi)+ S(s+\c,\Lambda)_{[-d+1]}(\xi) + \cdots,
  \end{equation}
  and the lowest degree term $S(s+\c,\Lambda)_{[-d]}(\xi)$ is equal to 
  $I(\c,\Lambda)(\xi)$, i.e., the integral over the unshifted cone~$\c$.
\end{lemma}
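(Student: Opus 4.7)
The plan is to start from Lemma~\ref{lemma-cell-generating_function} and expand each factor into its homogeneous components with respect to the grading on~$\CM_\ell(V^*)$, then identify the lowest-degree contribution and match it to the explicit formula for $I(\c,\Lambda)(\xi)$ given by Lemma~\ref{lemma-integral}.

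First, I would observe that $S(s+\b,\Lambda)(\xi)=\sum_{x\in(s+\b)\cap\Lambda}\e^{\la\xi,x\ra}$ is a \emph{finite} sum of exponentials, hence an entire holomorphic function on $V^*$; its Taylor expansion at the origin therefore contains only terms of degree $\geq 0$. By Lemma~\ref{lemma-cell}(iii), the value at $\xi=0$ is $\card((s+\b)\cap\Lambda)=\vol_\Lambda(\b)$, so the degree-zero component is the constant $\vol_\Lambda(\b)$.

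Next, for each generator $v_j$, the elementary expansion $1-\e^{\la\xi,v_j\ra}=-\la\xi,v_j\ra\bigl(1+\tfrac{1}{2}\la\xi,v_j\ra+\cdots\bigr)$ shows that $\frac{1}{1-\e^{\la\xi,v_j\ra}}\in\CM_\ell(V^*)$ has an expansion into homogeneous rational functions starting at degree~$-1$, whose leading term is $\frac{-1}{\la\xi,v_j\ra}$. Taking the product over $j=1,\dots,d$, and using that the grading is multiplicative (a product of homogeneous functions of degrees $a$ and $b$ is homogeneous of degree $a+b$), the lowest-degree component of $\prod_j\frac{1}{1-\e^{\la\xi,v_j\ra}}$ has degree~$-d$ and equals
\begin{equation*}
\frac{(-1)^d}{\prod_{j=1}^d \la\xi,v_j\ra}.
\end{equation*}

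Multiplying by $S(s+\b,\Lambda)(\xi)$, whose expansion starts at degree $0$ with constant $\vol_\Lambda(\b)$, the product has expansion starting at degree~$-d$, establishing~\eqref{homogeneous-components}. The degree $-d$ component is the product of the two lowest-degree parts,
\begin{equation*}
S(s+\c,\Lambda)_{[-d]}(\xi)=\vol_\Lambda(\b)\cdot\frac{(-1)^d}{\prod_{j=1}^d\la\xi,v_j\ra},
\end{equation*}
which is precisely $I(\c,\Lambda)(\xi)$ by Lemma~\ref{lemma-integral} applied at $s=0$.

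There is essentially no obstacle here; the only point requiring care is that the grading on $\CM_\ell(V^*)$ is multiplicative, so that the leading term of a product is the product of leading terms, and that the shift parameter~$s$ does not affect the degree $-d$ component because $S(s+\b,\Lambda)(\xi)$ contributes only its constant term $\vol_\Lambda(\b)$ at that order.
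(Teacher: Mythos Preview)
Your proposal is correct and follows essentially the same approach as the paper: both start from Lemma~\ref{lemma-cell-generating_function}, recognize that $S(s+\b,\Lambda)(\xi)$ is holomorphic with constant term $\vol_\Lambda(\b)$, and extract the leading term $\frac{(-1)^d}{\prod_j\la\xi,v_j\ra}$ from the product $\prod_j(1-\e^{\la\xi,v_j\ra})^{-1}$. The paper packages the latter step slightly differently, rewriting the product as $\prod_j\frac{\la\xi,v_j\ra}{1-\e^{\la\xi,v_j\ra}}\cdot\frac{1}{\prod_j\la\xi,v_j\ra}$ and invoking that $x/(1-\e^x)$ is holomorphic with value $-1$ at $0$, but this is the same computation you carry out factor by factor.
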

\begin{proof}
  We write
\begin{equation}\label{todd}
\prod_{j=1}^d \frac{1}{1- \e^{\la \xi,v_j\ra}}=\prod_{j=1}^d\frac{\la
\xi,v_j\ra}{1- \e^{\la \xi,v_j\ra}}\frac{1}{\prod_{j=1}^d \la
\xi,v_j\ra}.
\end{equation}
 The function $\frac{x}{1-\e^x}$ is
 holomorphic with value $-1$ for $x=0$. Thus we have $S(s+\c,\Lambda)\in \CM_\ell(V^*)$.
 The value at $\xi=0$ of the sum over the parallelepiped is the number of lattice
 points of the parallelepiped, that is $\vol_\Lambda(\b)$. This proves
 the last assertion.
\end{proof}

\subsection{Sketch of the method for lattice polytopes}

We will now explain the key point
of our method, with the simplifying assumption that the vertices of
the polytope are lattice points. We will show that the highest
degree coefficients of the weighted Ehrhart polynomial can be read
out from an approximation of the generating functions of the cones at
vertices. In Section~\ref{section-simplicial-cone} we will study
this approximation, and in Section~\ref{section-patchedgenfun-computation}
we will show how to efficiently compute it.
Then, in Section
\ref{section-Ehrhartcomputation}, we will come back to the
computation of  Ehrhart coefficients for the general case of rational polytopes.

\begin{proposition}\label{top-Ehrhart-with-generating-function}
Let $\p$ be a lattice polytope. Then, for $k\geq 0$, we have
\begin{equation}\label{ehrhart-coeff}
E_{M+d-k}(\p, \xi,M)= \sum_{s\in \CV(\p)}\frac{\langle\xi,s\rangle
^{M+d-k}}{(M+d-k)!} S(\c_s)_{[-d+k]}(\xi).
\end{equation}
The highest degree coefficient is just the integral
$$E_{M+d}(\p,\xi,M)= \int_\p \frac{\langle\xi,x\rangle^M}{M!} \,\mathrm dx.$$
\end{proposition}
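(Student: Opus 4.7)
The plan is to apply Brion's theorem to the dilated polytope $n\p$, substitute $\eta = t\xi$ for a scalar parameter~$t$, and then identify coefficients of $t^M n^m$ on both sides of the resulting identity.

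Because $\p$ is a lattice polytope, each vertex~$s$ of~$\p$ gives a lattice vertex $ns$ of $n\p$ whose cone of feasible directions is again~$\c_s$. Applying Theorem~\ref{brion} together with the translation property of~$S$ yields
$$S(n\p,\lattice)(\eta) = \sum_{s\in\CV(\p)} \e^{n\la\eta,s\ra}\,S(\c_s,\lattice)(\eta).$$
Setting $\eta = t\xi$, the left-hand side is entire in~$t$ (since $n\p$ is compact) and expands as
$$S(n\p,\lattice)(t\xi) = \sum_{x\in n\p\cap\lattice}\e^{t\la\xi,x\ra} = \sum_{M\geq 0} t^M E(\p,\xi,M;n) = \sum_{M\geq 0} t^M \sum_{m=0}^{M+d} E_m(\p,\xi,M)\,n^m,$$
using Definition~\ref{power-linear_form_top_Ehrhart}. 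For the right-hand side, Lemma~\ref{lemma-cell-generating_function-grading} gives $S(\c_s,\lattice)(t\xi) = \sum_{k\geq 0} t^{-d+k} S(\c_s)_{[-d+k]}(\xi)$, while $\e^{nt\la\xi,s\ra} = \sum_{j\geq 0}\tfrac{(nt)^j\la\xi,s\ra^j}{j!}$. Multiplying and collecting the coefficient of~$t^M$ (which forces $j = M+d-k$, with $k$ running from~$0$ to $M+d$) produces
$$\sum_{s\in\CV(\p)}\sum_{k=0}^{M+d}\frac{n^{M+d-k}\,\la\xi,s\ra^{M+d-k}}{(M+d-k)!}\,S(\c_s)_{[-d+k]}(\xi).$$
Matching this with $\sum_{m=0}^{M+d} E_m(\p,\xi,M)\,n^m$ coefficient-wise in~$n$ yields the claimed formula \eqref{ehrhart-coeff}.

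For the top coefficient ($k=0$), Lemma~\ref{lemma-cell-generating_function-grading} identifies $S(\c_s)_{[-d]}$ with $I(\c_s,\lattice)$. Applying the same $t$-expansion argument to the Brion-type identity for the continuous valuation, $I(\p,\lattice)(t\xi) = \sum_{s}\e^{t\la\xi,s\ra}\,I(\c_s,\lattice)(t\xi)$ (which follows from Proposition~\ref{valuationI} exactly as Theorem~\ref{brion} does for~$S$), recognizes $\sum_{s}\tfrac{\la\xi,s\ra^{M+d}}{(M+d)!}\,I(\c_s,\lattice)(\xi)$ as the coefficient of~$t^M$ in $I(\p,\lattice)(t\xi) = \int_\p\e^{t\la\xi,x\ra}\,\mathrm dx$, which is exactly $\int_\p \la\xi,x\ra^M/M!\,\mathrm dx$.

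The main obstacle is the legitimacy of matching coefficients, since each individual summand on the right-hand side is only a Laurent series in~$t$ (starting at~$t^{-d}$) rather than a Taylor series. I would handle this by fixing~$\xi$ generic (so that no $\la\xi, v_j\ra$ vanishes in any cone, making each $S(\c_s)(t\xi)$ a genuine meromorphic function of~$t$): the negative-order pole parts then cancel collectively in the sum over~$s$, since the LHS is entire in~$t$, and the non-negative-order coefficients agree term-by-term. Moreover, for each fixed $M \geq 0$ the $t^M$-coefficient on the right is visibly a polynomial in~$n$ of degree at most $M+d$, matching the polynomial character of $E(\p,\xi,M;n)$, so the identification of coefficients of~$n^m$ is unambiguous; the resulting equality between rational functions of~$\xi$ extends to all~$\xi$ by analytic continuation.
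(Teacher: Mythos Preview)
Your proof is correct and follows essentially the same route as the paper: apply Brion's theorem to $n\p$, substitute $t\xi$, expand $S(\c_s)(t\xi)$ via its homogeneous decomposition, and read off the coefficient of $t^M n^{M+d-k}$. Your final paragraph on the legitimacy of coefficient matching (genericity of~$\xi$, cancellation of the pole parts across vertices) is a welcome bit of extra care that the paper leaves implicit.
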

\begin{remark}
As functions of $\xi$, the coefficients $E_m(\p, \xi,M)$ are
polynomial, homogeneous of degree $M$. However, in \eqref{ehrhart-coeff}, they are  expressed
as linear combinations of rational functions of $\xi$, whose poles
cancel out.
\end{remark}
\begin{proof}[Proof of Proposition~\ref{top-Ehrhart-with-generating-function}]
 The starting point is Brion's formula.  As the vertices are lattice
points, we have
\begin{equation}\label{generating-function}
\sum_{x\in \,\p\cap \lattice} \e^{\langle\xi,x\rangle}=\sum_{s\in
\CV(\p)}S(s+\c_s)(\xi)=\sum_{s\in
\CV(\p)}\e^{\langle\xi,s\rangle}S(\c_s)(\xi).
\end{equation}
When $\p$ is replaced with $n\p$, the vertex $s$ is replaced with
$ns$ but the  cone $\c_s$ does not change. We obtain
\begin{equation*}\label{generating-function-lattice-vertices-2}
\sum_{x\in n\p\cap \lattice} \e^{\langle\xi,x\rangle}=\sum_{s\in
\CV(\p)}\e^{n \langle\xi,s\rangle}S(\c_s)(\xi).
\end{equation*}
We replace $\xi $ with $t\xi$,
\begin{equation*}\label{generating-function-lattice-vertices-3}
\sum_{x\in n\p\cap \lattice} \e^{t\langle\xi,x\rangle}=\sum_{s\in
\CV(\p)}\e^{nt \langle\xi,s\rangle}S(\c_s)(t \xi).
\end{equation*}
The decomposition into homogeneous components gives
\begin{equation*}
S(\c_s)(t \xi)= t^{-d}I(\c_s)(\xi) + t^{-d+1}S(\c_s)_{[-d+1]}(\xi) +
\cdots + t^k S(\c_s)_{[k]}(\xi)+ \cdots .
\end{equation*}
Hence, the $t^M$-term in  the right-hand side of the above equation is equal to
$$
\sum_{k=0}^{M+d}(nt)^{M+d-k}\, t^{-d+k}\frac{ \langle\xi,s\rangle
^{M+d-k}}{(M+d-k)!} S(\c_s)_{[-d+k]}(\xi).
$$
Thus we have
\begin{multline}\label{generating-function-lattice-vertices-4}
\sum_{x\in n\p\cap \lattice} \frac{\langle\xi,x\rangle ^M}{M! } =
\sum_{s\in \CV(\p)} n^{M+d} \frac{\langle\xi,s\rangle
^{M+d}}{(M+d)!}I(\c_s)(\xi) \\
 + n^{M+d-1}\frac{\langle\xi,s\rangle
^{M+d-1}}{(M+d-1)!}S(\c_s)_{[-d+1]}(\xi)+\cdots +
S(\c_s)_{[M]}(\xi).
\end{multline}
From this relation, we read immediately that $\sum_{x\in n\p\,\cap
\lattice} \frac{\langle\xi,x\rangle ^M}{M! }$ is a polynomial
function of $n$ of degree $M+d$, and that  the coefficient of
$n^{M+d-k}$ is given by (\ref{ehrhart-coeff}).
 The highest degree coefficient is given by
$$
E_{M+d}(\p,\xi,M)= \sum_{s\in \CV(\p)}\frac{\langle\xi,s\rangle
^{M+d}}{(M+d)!}I(\c_s)(\xi).
$$
Applying Brion's formula for the integral, this is equal to
 the term of $\xi$-degree $M$ in $I(\p)(\xi)$, which is indeed the integral
$ \int_\p \frac{\langle\xi,x\rangle^M}{M!} \,\mathrm dx$.
\end{proof}
From Proposition \ref{top-Ehrhart-with-generating-function}, we draw
an important consequence:  in order to compute the $k_0+1$ highest
degree terms of the weighted Ehrhart polynomial for the weight
$h(x)=\frac{\langle\xi,x\rangle^M}{M!}$, we \emph{only need} the
$k_0 + 1$ lowest degree homogeneous  terms of the meromorphic
function $S(\c_s)(\xi)$, for every vertex $s$ of $\p$. We compute
such an approximation in Section~\ref{section-simplicial-cone}; 
it turns out to be sufficient also
in the general case of a rational polytope.

\subsection{Intermediate generating functions}

To obtain the approximation, we study generating functions
which interpolate between the integral $I(\p,\Lambda)$ and the discrete sum
$S(\p,\Lambda)$. This trend of ideas was first discussed by Barvinok in
\cite{barvinok-2006-ehrhart-quasipolynomial}. 
Let $L$ be a rational subspace of $V$.
 To any polyhedron~$\p$ we  associate a
meromorphic function $S^L(\p,\Lambda)(\xi)\in \CM(V^*)$, which is, roughly
speaking, obtained by slicing $\p$ along affine subpaces parallel to~$L$ 
through lattice points, and adding the integrals of $\e^{\la
\xi,x\ra} $ along the slices. Recall that the quotient space $V/L$
is endowed with the projected  lattice $\lattice_{V/L}$. 

\begin{proposition}\label{valuationSL}
Let $L\subseteq V$ be a rational subspace. There exists a unique
valuation  $S^L(\cdot,\Lambda)$ which to every rational polyhedron $\p\subset V$
associates a meromorphic function with rational coefficients
$S^L(\p,\Lambda)\in \CM(V^*)$ so that the following properties hold:

\begin{enumerate}[\rm(i)]
\item  If $\p$ contains a line, then $S^L(\p,\Lambda)=0$.

\item
  \begin{equation}\label{SL}
    S^L(\p,\Lambda)(\xi)= \sum_{x\in \lattice_{V/L}} \int_{\p\cap (x+L)} \e^{\la
      \xi,y\ra}\,\mathrm dy,
  \end{equation}
  for every $\xi\in V^*$ such that the above sum converges.

\item For every point $s\in \lattice$, we have
$$
S^L(s+\p,\Lambda)(\xi) = \e^{\la \xi,s\ra}S^L(\p,\Lambda)(\xi).
$$
\end{enumerate}
\end{proposition}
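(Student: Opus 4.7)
My plan is to establish uniqueness via the valuation property plus (i) and (iii) forcing the function on a dense family of polyhedra (affine cones at lattice points), and to construct existence by exhibiting $S^L$ on such affine cones, where the formula (ii) can be summed explicitly, and then extending by linearity.

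For uniqueness, the valuation property combined with property (i) allows us to reduce any polyhedron to a signed sum of its affine tangent cones at rational vertices via a Brion--Lawrence--Varchenko-type decomposition, modulo cylinders (polyhedra containing lines). By subdividing into simplicial cones (valuation property again) and translating by lattice vectors (property (iii)), it suffices to determine $S^L$ on a simplicial affine cone $s + \c$ with rational~$s$. There, the series in (ii) converges absolutely for $\xi$ in the interior of the dual cone $-\c^\circ$, a non-empty open subset of $V^*$, so by the identity theorem for meromorphic functions the extension to $\CM(V^*)$ is unique.

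For existence, I would construct $S^L$ first on a simplicial affine cone $s + \c$ with $s \in \Lambda$, by choosing a rational basis of~$\c$ adapted to the projection $\pi : V \to V/L$ and computing the slicing sum in (ii) explicitly. The slice $(s+\c)\cap \pi^{-1}(x)$ for $x$ in the projected lattice is itself an affine cone whose geometry varies piecewise-linearly in~$x$, and after separating the generators of~$\c$ into those contributing to ``continuous'' ($L$-parallel) and ``discrete'' ($V/L$-parallel) directions, the resulting sum factors as a product of univariate integrals $\int e^{\la\xi,y\ra}\,\mathrm dy$ and univariate geometric sums $\sum_k e^{k\la\xi,v\ra}$, each rational in $e^{\la\xi,\cdot\ra}$, so $S^L(s+\c,\Lambda) \in \CM(V^*)$. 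Extension to arbitrary rational polyhedra is by linearity via a Brion--Lawrence--Varchenko decomposition: a linear relation $\sum_i r_i \charfun{\p_i}=0$ restricts for each $x \in \Lambda_{V/L}$ to a linear relation among polyhedra in the affine $L$-space $x+L$, on which the continuous valuation property of $I(\cdot, L\cap\Lambda)$ (Proposition \ref{valuationI}) gives a fiberwise identity, and summing over~$x$ yields the required valuation identity for $S^L$.

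Property (iii) is built into the construction on lattice translates of cones; property (ii) holds by construction on the open convergence domain in~$V^*$ and extends by analytic continuation; property (i) splits into two cases. If $\p$ contains a line $\ell \subseteq L$, every slice $\p \cap (x+L)$ contains a parallel line, so the inner integral vanishes by the continuous valuation property fiberwise. If $\ell \not\subseteq L$, then $\pi(\ell)$ is a nonzero line in~$V/L$ and $\pi(\p)$ is a cylinder there; here one exploits that the slicing structure is constant along $\pi(\ell)$ to reduce the outer sum to the discrete valuation $S(\cdot,\Lambda_{V/L})$ applied to a cylinder, which vanishes by Proposition \ref{Svaluation}(i). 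The main obstacle I anticipate is the interplay between convergence of the sum in (ii) and the valuation property: one must show that the construction on affine cones is independent of the simplicial subdivision chosen and of the chosen lattice translation of the base point. I would handle this by verifying the required identities on the common open convergence domain in $V^*$, where (ii) holds literally as an absolutely convergent series, and then transporting the identities to $\CM(V^*)$ by analytic continuation.
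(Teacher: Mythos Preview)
The paper does not actually give a proof of this proposition: immediately after the statement the authors write that ``the proof is entirely analogous to the case $L=\{0\}$, see Theorem~3.1 in \cite{BarviPom}, and we omit it.'' So there is no argument in the paper to compare against; your sketch is supplying what the authors leave to the reader, and its overall shape (reduce by Brion--Lawrence--Varchenko and simplicial subdivision to affine simplicial cones, define $S^L$ there via the convergent series~\eqref{SL}, and transport identities by analytic continuation) is exactly the standard route for $S(\cdot,\Lambda)$ that the paper is invoking.

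One step in your existence argument needs more care. You propose to compute $S^L$ on a simplicial affine cone $s+\c$ by ``separating the generators of~$\c$ into those contributing to `continuous' ($L$-parallel) and `discrete' ($V/L$-parallel) directions,'' so that the slicing sum factors as a product of univariate integrals and geometric series. That clean factorization is precisely the content of Proposition~\ref{petite-somme}, but that proposition assumes $L=L_I$ is spanned by a \emph{subset} of the edge generators of~$\c$. For an arbitrary rational subspace~$L$ no such subset need exist: the projection of $\c$ to $V/L$ is generated by $d$ vectors in a $(d-\dim L)$-dimensional space and is typically not simplicial, and the slices $(s+\c)\cap(x+L)$ are polytopes whose combinatorial type changes with~$x$. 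Your product formula therefore does not hold as stated. The fix is either to further subdivide~$\c$ into simplicial cones adapted to~$L$ (so that in each piece the projection is simplicial and the fibers over the chambers of the projected fan are of constant shape), or to follow the more structural construction in the cited Barvinok--Pommersheim reference, which builds the valuation abstractly and sidesteps this explicit computation. With that repaired, the remainder of your plan goes through.
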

We call the function $S^L(\p,\Lambda)$ an \emph{intermediate generating function}.
The proof is  entirely analogous to the case  $L= \{0\}$, see
Theorem 3.1 in \cite{BarviPom}, and we omit it.

For $L=\{0\}$, we recover the valuation $S$.  For $L=V$, we have
$S^V(\p,\Lambda)=I(\p,\Lambda)$.  In particular, if $\p$ is not full-dimensional, then $S^V(\p,\Lambda)=0$.\smallbreak 

If $\p$ is compact, the meromorphic function $S^L(\p,\Lambda)(\xi)$ is actually regular
at $\xi=0$, and its value for $\xi=0$ is the $\Q$-valued valuation
$E_{L^\perp}(\p)$ considered by Barvinok
\cite{barvinok-2006-ehrhart-quasipolynomial}.

\begin{remark} The function $S^L(\p,\Lambda) $ is actually an element of
  $\CM_{\ell}(V^*)$, just like the functions
  $S(\p,\Lambda)$ and $I(\p,\Lambda)$.  This follows from an interesting decomposition that
  allows to write $S^L$ as a combination of terms using $S$ and~$I$ for
  certain cones.  This and other properties of the
  valuation~$S^L(\cdot,\Lambda)$ will be discussed in a forthcoming
  article~\cite{so-called-paper-2}. 
\end{remark}

\section{Approximation of the generating function of a simplicial affine cone}\label{section-simplicial-cone}

 Let $\c\subset V $ be a simplicial cone with
integral generators $v_j$, $j=1,\ldots,d$, and let $s\in V_\Q$. Let
$k_0 \leq d $. In this section we will obtain an expression for  the
$k_0 +1$ lowest degree homogeneous terms of the meromorphic function
$S(s+\c)(\xi)$. Recall that if $\c$ is unimodular, the function
$S(s+\c)(\xi)$ has a ``short'' expression
$$
S(s+\c)(\xi)=\e^{\la \xi,\bar{s}\ra}\prod_{j=1}^d\frac{1}{1- \e^{\la \xi,v_j\ra}},
$$
where  $v_i$, $i=1,\ldots,d$  are the primitive integral generators
of the edges and $\bar{s}$ is the unique lattice point in the
corresponding parallelepiped $s+\b$. This is a particular case of Lemma
\ref{lemma-cell}.

When $\c$ is not unimodular, it is not possible to compute
efficiently the first $k_0$ terms of the Laurent expansion of the
function $S(s+\c)(\xi)$, if $k_0$  is part of the input as well as
the dimension $d$. In contrast, \emph{if  $k_0$ is fixed}, we are
going to obtain an expression for the terms of degree $ \leq -d+k_0$
which only involves a discrete summation over cones in dimension $\leq k_0$ and determinants. For example,  the lowest degree term is $
\mathopen|\det(v_j)|\prod_j \frac{-1}{\langle \xi,v_j\rangle}$.

\subsection{Patching functions} \label{patchingfun}

For constructing the approximation, we will use  a \emph{patching
function}. For $I\subseteq \{1,\ldots, d\}$, we denote by $L_I$ the
linear span of the vectors $v_i$, $i\in I$ and by
$L_I^\perp\subseteq V^* $ the orthogonal subspace.  We denote by $I^c$ the complement of
$I$ in $\{1,\ldots ,d\}$. 
\begin{definition}
We denote by  $\Jposet{d}{d_0}$ the set of subsets $I\subseteq
\{1,\dots,d\}$ of cardinality $|I|\geq d_0$. A   function $I\mapsto
\lambda(I)$ on $\Jposet{d}{d_0}$ is called a \emph{patching  function} if it
satisfies the following condition.
\begin{equation}\label{Barvinok-patchfunction}
\charfun[bigg]{\bigcup_{I\in \Jposet{d}{d_0}}L_I^\perp} = \sum_{I\in
\Jposet{d}{d_0}}\lambda(I)\charfun[big]{L_{I}^\perp}.
\end{equation}
\end{definition}
\begin{remark}
  The family of subspaces $L_I$, $|I|\geq d_0$  is closed under sum, and
  the family of orthogonals~$L_I^\perp$ is closed under intersection. 
  The value $\lambda(I)$ plays the same as role as the M\"obius function
  $\mu(L)$ for $L_I^\perp$ that Barvinok \cite[section 7]{barvinok-2006-ehrhart-quasipolynomial} computes
  algorithmically for a certain family of subspaces $L$ by walking the poset.    
  From this discussion, it follows that patching functions do exist.  
  The precise relation between Barvinok's construction and the construction of
  the present paper will be studied in the forthcoming
  paper~\cite{so-called-paper-3}. 
\end{remark}
We will compute a canonical patching function below,
in Proposition~\ref{rho}.   

Let us state some interesting properties.
\begin{lemma}\label{patching_function}
Let $I\mapsto \lambda(I)$ be a function on $\Jposet{d}{d_0}$. The
following conditions are equivalent.
\begin{enumerate}[\rm(i)]
\item  $\lambda$ is a patching function.

\item $ \sum_{I\in \Jposet{d}{d_0}, I\subseteq I_0}\lambda(I)=1 $
  for every $ I_0\in \Jposet{d}{d_0}$.

\item For $1\leq i\leq d$, let $F_i(z)\in\C[[z]]$ be a formal power series
  (in one variable) with constant term equal to $1$. Then
  \begin{multline}\label{product-power-series}
    \prod_{1\leq i\leq d}F_i(z_i)\equiv \sum_{I\in
      \Jposet{d}{d_0}}\lambda(I)\prod_{i\in I^c
    }F_i(z_i) \\
    \mbox { mod terms of } z\mbox {-degree } \geq d-d_0+1.
  \end{multline}
\item Let $z_{I^c}= \sum_{i\in I^c}z_i$. Then
\begin{multline}
\e^{z_1+\cdots + z_d} \equiv \sum_{I\in \Jposet{d}{d_0}}\lambda(I)
\e^{z_{I^c}} 
\mbox { mod terms of } z\mbox {-degree } \geq d-d_0+1.
\end{multline}
\end{enumerate}
\end{lemma}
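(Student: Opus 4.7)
The plan is to prove the chain (i) $\Leftrightarrow$ (ii), (ii) $\Leftrightarrow$ (iii), (iii) $\Rightarrow$ (iv), (iv) $\Rightarrow$ (ii), which yields the full set of equivalences.

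For (i) $\Leftrightarrow$ (ii), I would first observe that since the $v_i$ are linearly independent, $L_I \subseteq L_{I'}$ iff $I \subseteq I'$, hence $L_I^\perp \supseteq L_{I'}^\perp$ iff $I \subseteq I'$. Pick $I_0\in\Jposet{d}{d_0}$ and a generic $\xi\in L_{I_0}^\perp$ (meaning $\xi\notin L_{I'}^\perp$ for every $I'\not\subseteq I_0$). Then $\xi\in L_I^\perp$ iff $I\subseteq I_0$, so evaluating both sides of \eqref{Barvinok-patchfunction} at $\xi$ yields $1 = \sum_{I\in\Jposet{d}{d_0},\,I\subseteq I_0}\lambda(I)$. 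Since $\Jposet{d}{d_0}$ is closed under intersection, stratifying $\bigcup L_I^\perp$ by generic points of the $L_{I_0}^\perp$ shows the converse.

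For (ii) $\Leftrightarrow$ (iii), I would write $F_i(z_i) = 1 + G_i(z_i)$ with $G_i(0)=0$, so that for any subset $K\subseteq\{1,\dots,d\}$,
\begin{equation*}
  \prod_{i\in K} F_i(z_i) = \sum_{J\subseteq K}\prod_{i\in J}G_i(z_i).
\end{equation*}
Applying this to $K=\{1,\dots,d\}$ and to $K=I^c$ and collecting, the difference of the two sides of \eqref{product-power-series} becomes
\begin{equation*}
  \sum_{J\subseteq\{1,\dots,d\}}\Bigl(1-\sum_{I\in\Jposet{d}{d_0},\,I\subseteq J^c}\lambda(I)\Bigr)\prod_{i\in J}G_i(z_i).
\end{equation*}
Since the monomial $\prod_{i\in J}z_i$ has $z$-degree exactly $|J|$ and appears in $\prod_{i\in J}G_i(z_i)$ with a coefficient that can be made nonzero by a suitable choice of the $G_i$, the identity \eqref{product-power-series} holds for every choice of $F_i$ iff each bracket with $|J|\le d-d_0$ vanishes. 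Substituting $I_0 = J^c$ (so $I_0\in\Jposet{d}{d_0}$ iff $|J|\le d-d_0$) gives precisely (ii).

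For (iii) $\Rightarrow$ (iv), just specialize $F_i(z) = \e^{z}$; both products on the two sides then reduce to the exponentials in (iv). For (iv) $\Rightarrow$ (ii), I would expand $\e^{z_1+\cdots+z_d}=\sum_\alpha z^\alpha/\alpha!$ and $\e^{z_{I^c}}=\sum_{\alpha:\, \mathrm{supp}(\alpha)\subseteq I^c}z^\alpha/\alpha!$. Given $I_0\in\Jposet{d}{d_0}$, set $\alpha = \mathbf{1}_{I_0^c}$, which satisfies $|\alpha|=|I_0^c|\le d-d_0$; comparing the coefficient of this $z^\alpha$ on both sides of (iv) produces exactly $1 = \sum_{I\in\Jposet{d}{d_0},\,I\subseteq I_0}\lambda(I)$.

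The main bookkeeping obstacle is (ii) $\Leftrightarrow$ (iii): one has to keep track of which index subset $J$ contributes which monomial degree, and justify that the vanishing of the bracket is necessary (using the freedom in choosing $F_i$) as well as sufficient. The remaining implications are essentially formal, once (iii) is set up correctly.
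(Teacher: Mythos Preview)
Your proof is correct and follows essentially the same route as the paper: (i)$\Leftrightarrow$(ii) via generic points of the $L_{I_0}^\perp$, (ii)$\Rightarrow$(iii) by writing $F_i=1+G_i$ and expanding over subsets, (iii)$\Rightarrow$(iv) by specialization to $F_i=\e^z$, and (iv)$\Rightarrow$(ii) by reading off a single monomial coefficient (the paper instead compares the whole degree-$(d-d_0)$ homogeneous part of the exponential, which amounts to the same thing). One small slip: $\Jposet{d}{d_0}$ is closed under \emph{union}, not intersection (equivalently, the family $\{L_I^\perp\}$ is closed under intersection), but you never actually use this claim in your stratification argument for (ii)$\Rightarrow$(i), so it is harmless.
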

\begin{proof}
Let $I_0\in \Jposet{d}{d_0}$. Then  there exists  $\xi\in L_{I_0}^\perp$
such that $\xi\in L_{I}^\perp$ if and only if $L_{I_0}^\perp \subseteq
L_I^\perp$, i.e., if and only if $I\subseteq I_0$. Thus (i)
$\Leftrightarrow$ (ii).

Let us prove that (ii) $\Rightarrow$ (iii). We write $F_i(z_i)= 1+
z_ig_i(z_i)$. We have
\begin{equation}\label{product-expansion}
\prod_{1\leq i\leq d}(1+z_ig_i(z_i))=\sum_{K\subseteq
\{1,\dots,d\}}\prod_{i\in K}z_ig_i(z_i).
\end{equation}
Consider a monomial $z_1^{k_1}\cdots z_d^{k_d}$ of total degree
$k_1+ \cdots + k_d \leq d-d_0$.  Let us denote its coefficient
 in the product $\prod_{i\in
K}z_ig_i(z_i)$ by $\alpha_K$. Let $K_0$ be the set of indices such
that $k_i\neq 0$. Then $|K_0|\leq d-d_0$.  Moreover in the right
hand side of (\ref{product-expansion}), our monomial appears only in
the terms where $K\subseteq K_0$. Therefore the coefficient of
$z_1^{k_1}\cdots z_d^{k_d}$ in $\prod_{1\leq i\leq d}F_i(z_i)$ is
equal to $\sum_{K\subseteq K_0}\alpha_K$. Furthermore, the
coefficient of $z_1^{k_1}\cdots z_d^{k_d}$  in the right-hand-side
of (\ref{product-power-series}) is equal to
$$
\sum_{I\in \Jposet{d}{d_0}} \lambda(I)\sum_{K\subseteq K_0\cap
I^c}\alpha_K = \sum_{K\subseteq K_0}\alpha_K \sum_{\substack{I\in \Jposet{d}{d_0}\\
K\subseteq I^c}}\lambda(I).
$$
By condition (ii) we have $$\sum_{\substack{I\in \Jposet{d}{d_0}\\ K\subseteq
I^c}}\lambda(I)=1\quad\text{for every $K\subseteq K_0$}.$$
Thus we have proved
that (ii) $\Rightarrow$ (iii). Next, (iv) is a particular case of
(iii), so it remains only to prove that (iv) implies (ii).

By expanding the exponentials in  condition (iv), we obtain
$$
(z_1+\dots + z_d) ^{d-d_0}=  \sum_{I\in \Jposet{d}{d_0}}\;\;\lambda(I)
\Bigl(\sum_{i\in I^c}z_i \Bigr)^{d-d_0}.
$$
Condition  (ii) follows easily from this relation.
\end{proof}

\subsection{Formula for intermediate sums}

In preparation for the approximation theorem, we need some notations and an expression
for intermediate sums~$S^L(s+\c,\Lambda)(\xi)$.

 We have $V=L_I\oplus L_{I^c}$. For $x\in
V$ we denote the components
 by
$$
x=x_I + x_{I^c}.
$$
Thus we identify the quotient $V/L_{I}$ with $L_{I^c}$ and   we
denote the projected lattice by $\lattice_{I^c}\subset L_{I^c}$.
Note that $L_{I^c} \cap \lattice \subseteq \lattice_{I^c}$, but the
inclusion is strict in general.
\begin{example}
Let $v_1=(1,0), v_2=(1,2)$, $I=\{2\}$. The projected lattice~$\Lambda_{I^c}$ on $L_{I^c}=\R v_1$
is $\Z \frac{v_1}{2}$.  See Figure~\ref{fig:projected-lattice}.
\begin{figure}[t]
  \centering
  \ifpdf
  \input{projected-lattice.pdf_t}
   \else
   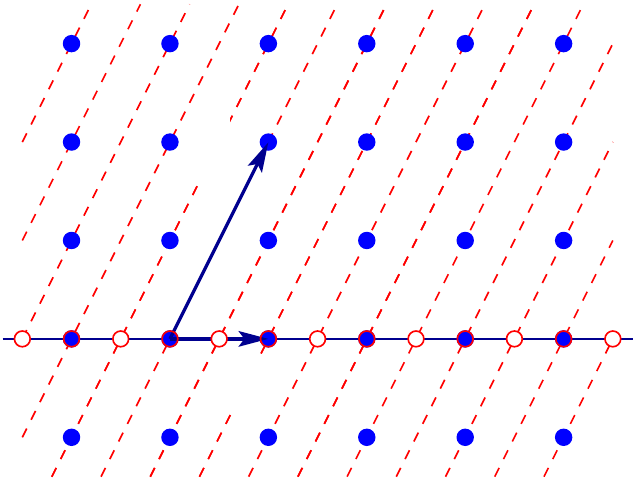
   \fi
  \caption{The projected lattice~$\Lambda_{\{1\}}$}
  \label{fig:projected-lattice}
\end{figure}
\end{example}
 We denote by   $\c_{I}$ the cone  generated by the vectors  $v_j$, for $j\in
I$ and by $\b_I$  the parallelepiped
 $\b_I=\sum_{i\in I}[0,1\mathclose[\, v_i$.
   Similarly
 we denote by $\c_{I^c}$ the cone generated by  the vectors  $v_j$, for $j\in
I^c$ and $\b_{I^c}=\sum_{i\in I^c}[0,1\mathclose[\, v_i$.
The projection of the cone $\c$ on $V/L_I=L_{I^c}$ identifies with $\c_{I^c}$. Note that the generators
$v_i$, $i\in I^c,$ may be non-primitive for the projected lattice $\lattice_{I^c}$,
even if it is primitive for $\lattice$, as we see in the previous
example.
We write $s=s_I+s_{I^c}$. 

We first show that the intermediate generating function~$S^{L_I}(s+\c,\Lambda)$
decomposes as a product. 

\medskip

The function $S(s_{I^c}+\c_{I^c},\lattice_{I^c})(\xi)$ is a meromorphic function
on the space $(L_{I^c})^*$. The integral
$I(s_I+\c_I,L_I\cap \lattice)(\xi)$ is a meromorphic function
on the space $(L_I)^*$. We consider both as functions on $V^*$
through the decomposition $V=L_I\oplus L_{I^c}$.
\begin{proposition}  \label{petite-somme}
The intermediate sum for the full cone $s+\c$ breaks up into the
product
\begin{equation}\label{petite-somme-eqn}
 S^{L_I}(s+\c,\Lambda)(\xi) =
 S(s_{I^c}+\c_{I^c},\lattice_{I^c})(\xi)\, I(s_I+\c_I,L_I\cap
 \lattice)(\xi).
\end{equation}
\end{proposition}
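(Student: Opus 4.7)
The plan is to unwind the defining formula \eqref{SL} for $S^{L_I}(s+\c,\Lambda)(\xi)$ and exploit that, for a simplicial cone, the generators split the ambient space as $V=L_I\oplus L_{I^c}$, so that the polyhedron $s+\c$ itself splits as a product.

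First, I would fix $\xi\in V^*$ in the open region where all the series below converge absolutely (for instance, where the real part of $\xi$ is sufficiently negative on each $v_j$); the final identity is between meromorphic functions in $\CM_\ell(V^*)$, so it will extend from any open set by analytic continuation. Using $V=L_I\oplus L_{I^c}$, I would write $s=s_I+s_{I^c}$ and $y=y_I+y_{I^c}$ for points of $V$, and identify $V/L_I$ with $L_{I^c}$, under which the projected lattice becomes $\Lambda_{I^c}$.

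Next I would describe the slices. Since $\c=\c_I+\c_{I^c}$ as a Minkowski sum with $\c_I\subset L_I$ and $\c_{I^c}\subset L_{I^c}$,
\begin{equation*}
s+\c=(s_I+\c_I)+(s_{I^c}+\c_{I^c}),
\end{equation*}
so for $x\in L_{I^c}$ the slice $(s+\c)\cap(x+L_I)$ is empty unless $x\in s_{I^c}+\c_{I^c}$, in which case it equals $x+(s_I+\c_I)$, sitting inside the affine space $x+L_I$. Using the decomposition $\la\xi,y\ra=\la\xi,x\ra+\la\xi,y_I\ra$ for $y=x+y_I$ with $y_I\in s_I+\c_I$, the integral over the slice factors as
\begin{equation*}
\int_{(s+\c)\cap(x+L_I)}\e^{\la\xi,y\ra}\,\mathrm dy
=\e^{\la\xi,x\ra}\int_{s_I+\c_I}\e^{\la\xi,z\ra}\,\mathrm dm_{L_I\cap\Lambda}(z)
=\e^{\la\xi,x\ra}\,I(s_I+\c_I,L_I\cap\Lambda)(\xi),
\end{equation*}
by property (ii) of Proposition \ref{valuationI} applied inside $L_I$.

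Finally I would sum over $x\in\Lambda_{I^c}$. Only $x\in(s_{I^c}+\c_{I^c})\cap\Lambda_{I^c}$ contribute, and the factor $I(s_I+\c_I,L_I\cap\Lambda)(\xi)$ comes out of the sum, leaving
\begin{equation*}
S^{L_I}(s+\c,\Lambda)(\xi)=I(s_I+\c_I,L_I\cap\Lambda)(\xi)\sum_{x\in(s_{I^c}+\c_{I^c})\cap\Lambda_{I^c}}\e^{\la\xi,x\ra},
\end{equation*}
and the remaining sum is, by property (ii) of Proposition \ref{Svaluation} inside $L_{I^c}$ with the lattice $\Lambda_{I^c}$, exactly $S(s_{I^c}+\c_{I^c},\Lambda_{I^c})(\xi)$. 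This is the desired identity \eqref{petite-somme-eqn} on the convergence domain, and it then holds as an equality in $\CM_\ell(V^*)$. The main thing to be careful about is not a hard obstacle but a bookkeeping point: verifying that under the identification $V/L_I\cong L_{I^c}$ the projected lattice used in \eqref{SL} coincides with the lattice $\Lambda_{I^c}$ appearing on the right-hand side, so that the outer sum really is the discrete generating function of $s_{I^c}+\c_{I^c}$ for $\Lambda_{I^c}$ and not for some sublattice (such as $L_{I^c}\cap\Lambda$), which in general it is not.
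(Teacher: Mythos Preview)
Your proof is correct and follows essentially the same approach as the paper's: both identify the slices of $s+\c$ parallel to $L_I$ as translates $x+(s_I+\c_I)$, factor the integral over each slice as $\e^{\la\xi,x\ra}$ times a fixed integral over $s_I+\c_I$ in $L_I$, and then sum over $x$ in the projected lattice~$\Lambda_{I^c}$. The only cosmetic difference is that the paper computes the slice integral explicitly in coordinates and identifies it with $I(s_I+\c_I,L_I\cap\Lambda)(\xi)$ via Lemma~\ref{lemma-integral}, whereas you invoke Proposition~\ref{valuationI}(ii) directly and then appeal to analytic continuation.
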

\begin{proof}
The projection of the cone $s + \c$ into~$L_{I^c}$ is the cone $s_{I^c}+\c_{I^c}$.  For
each $x_{I^c}\in (s_{I^c}+\c_{I^c})\cap \lattice_{I^c}$, the slice $(s+\c ) \cap
(x_{I^c}+ L_I)$  is the cone $x_{I^c} +s_I+\c_I$. Let us compute
the integral on the slice.
\begin{equation}\label{slice}
  \int_{(s+\c )\cap (x_{I^c}+ L_I)}\e^{\la \xi,y \ra} \,\mathrm dm_{L_I\cap\Lambda}(y).
\end{equation}
We write $y =x_{I^c}+ s_I+ \sum_{j\in I} y_j v_j$. 
Then
$$\mathrm dm_{L_I\cap\Lambda}(y) = \vol_{L_I\cap \lattice}(\b_I)\prod_{j\in
  I}\mathrm dy_j.$$
Hence (\ref{slice})  is equal to
$$
\e^{\la \xi,x_{I^c}\ra}\e^{\la \xi,s_I \ra}\vol_{L_I\cap
\lattice}(\b_{I})(-1)^{|I|}\prod_{j\in I}\frac{1}{\la
\xi,v_j\ra}.
$$
We observe that only the
first factor, $\e^{\la \xi,x_{I^c}\ra}$, depends on~$x_{I^c}$.  The sum of these
factors over all $x_{I^c}\in (s_{I^c}+\c_{I^c})\cap \lattice_{I^c}$ gives
$S(s_{I^c}+\c_{I^c},\lattice_{I^c})(\xi)$, and using formula~\eqref{I} for
the integral, we obtain~\eqref{petite-somme-eqn}.
\end{proof}

\subsection{Approximation theorem}

We can now state and prove the approximation theorem. 

\begin{theorem}[Approximation by a patched generating function]\label{approx-generating-function}
Let $\c\subset V$ be a rational simplicial cone with  edge
generators $v_1,\dots,v_d$. Let $s\in V_\Q$. Let $I\mapsto
\lambda(I)$ be a patching function on $\Jposet{d}{d_0}$. For $I\in
\Jposet{d}{d_0}$ let $L_I$ be the linear span of $\{v_i\}_{ i\in I}$. Then
we have
\begin{multline}\label{belle-formule-box}
  S(s+\c,\Lambda)(\xi)\equiv  A^\lambda(s+\c,\Lambda)(\xi) 
  := \sum_{I\in \Jposet{d}{d_0}}\lambda(I)\, S^{L_I}(s+\c, \Lambda)(\xi)
\\ \mbox{ mod terms of  }  \xi\mbox{-degree } \geq -d_0+1.
\end{multline}
\end{theorem}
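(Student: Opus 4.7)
The plan is to reduce the theorem to a holomorphic congruence modulo $\xi^{d-d_0+1}$ and then apply Lemma~\ref{patching_function}(iii) one lattice point at a time. Using Lemma~\ref{lemma-cell-generating_function}, Proposition~\ref{petite-somme}, and formula~\eqref{I}, both $S(s+\c,\Lambda)(\xi)$ and each $S^{L_I}(s+\c,\Lambda)(\xi)$ factor as $(-1)^d/\prod_{j=1}^d\langle\xi,v_j\rangle$ (a meromorphic factor of $\xi$-degree $-d$) times a holomorphic function. Introducing $z_j:=\langle\xi,v_j\rangle$ and $g(z):=z/(\e^z-1)$, which is holomorphic with $g(0)=1$, it thus suffices to prove
\[
S(s+\b,\Lambda)(\xi)\prod_{j=1}^d g(z_j)
\;\equiv\;
\sum_{I\in\Jposet{d}{d_0}}\lambda(I)\,\vol_{L_I\cap\Lambda}(\b_I)\,\e^{\langle\xi,s_I\rangle}\,
S(s_{I^c}+\b_{I^c},\Lambda_{I^c})(\xi)\prod_{j\in I^c}g(z_j)
\]
modulo terms of $\xi$-degree $\geq d-d_0+1$.

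Next I would expand $S(s+\b,\Lambda)(\xi)=\sum_{x\in(s+\b)\cap\Lambda}\e^{\langle\xi,x\rangle}$, writing each $x$ uniquely as $x=s+\sum_j t_j(x)v_j$ with $t_j(x)\in[0,1)$. Pulling out the common exponential $\e^{\langle\xi,s\rangle}$ from both sides, the left side of the desired identity becomes $\sum_x\prod_{j=1}^d F_j^{(x)}(z_j)$, where $F_j^{(x)}(z):=\e^{t_j(x)z}g(z)$ satisfies $F_j^{(x)}(0)=1$. Applying Lemma~\ref{patching_function}(iii) to each fixed $x$ yields the per-point congruence $\prod_j F_j^{(x)}(z_j)\equiv\sum_I\lambda(I)\prod_{j\in I^c}F_j^{(x)}(z_j)$ modulo $z$-degree $\geq d-d_0+1$.

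Summing over $x$, the remaining task is to evaluate $\sum_{x\in(s+\b)\cap\Lambda}\prod_{j\in I^c}\e^{t_j(x)z_j}=\sum_x \e^{\langle\xi,x_{I^c}-s_{I^c}\rangle}$ for each $I$. I would handle this by fibering the projection $x\mapsto x_{I^c}$ from $(s+\b)\cap\Lambda$ onto $(s_{I^c}+\b_{I^c})\cap\Lambda_{I^c}$: each fiber is the intersection of $\Lambda\cap L_I$ with a translate of $\b_I$, and by Lemma~\ref{lemma-cell}(iii) contains exactly $\vol_{L_I\cap\Lambda}(\b_I)$ lattice points. Hence the sum equals $\vol_{L_I\cap\Lambda}(\b_I)\,\e^{-\langle\xi,s_{I^c}\rangle}\,S(s_{I^c}+\b_{I^c},\Lambda_{I^c})(\xi)$. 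Multiplying back by $\e^{\langle\xi,s\rangle}$ and using $\e^{\langle\xi,s\rangle-\langle\xi,s_{I^c}\rangle}=\e^{\langle\xi,s_I\rangle}$ reconstructs exactly the right-hand side of the displayed identity.

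The main subtlety I expect is the choice of $F_j^{(x)}$: absorbing only the fractional coordinate $t_j(x)\in[0,1)$ into the exponential (rather than the full coordinate $s_j+t_j(x)$ of $x$ in the basis $v_1,\dots,v_d$) is what permits the vertex-shift $\e^{\langle\xi,s\rangle}$ to be extracted uniformly before invoking the patching identity and then to redistribute as $\e^{\langle\xi,s_I\rangle}$ via the splitting $s=s_I+s_{I^c}$. This bookkeeping is precisely what aligns the output of Lemma~\ref{patching_function}(iii) with the factorization of $S^{L_I}$ supplied by Proposition~\ref{petite-somme}.
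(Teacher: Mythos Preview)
Your proof is correct and follows essentially the same route as the paper's: both apply Lemma~\ref{patching_function}(iii) to the functions $F_j(z)=\e^{(a_j-s_j)z}\cdot z/(\e^z-1)$ (your $t_j(x)$ is exactly the paper's $a_j-s_j$), sum over the lattice points of $s+\b$, and then count the fibers of the projection $x\mapsto x_{I^c}$ via Lemma~\ref{lemma-cell} to recover the factor $\vol_{L_I\cap\Lambda}(\b_I)\,S(s_{I^c}+\b_{I^c},\Lambda_{I^c})$. The only cosmetic difference is that you strip off the rational factor $(-1)^d/\prod_j\langle\xi,v_j\rangle$ at the outset, whereas the paper first derives the holomorphic congruence and then multiplies this factor back in; the ``subtlety'' you flag about extracting $\e^{\langle\xi,s\rangle}$ before invoking the patching identity is precisely the step the paper performs when it multiplies both sides by $\e^{\langle\xi,s\rangle}$.
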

We call the function~$A^\lambda(s+\c,\Lambda)(\xi)$ on the right-hand side of~\eqref{belle-formule-box} the
\emph{patched generating function} of~$s+\c$ (with respect to~$\lambda$). 

\begin{proof}[Proof of Theorem~\ref{approx-generating-function}]
We write the vertex as $s=\sum_{i} s_i v_i$. Let $a=\sum_{i} a_i
v_i\in V$. We apply (\ref{product-power-series}) to the functions
$$
F_i(z_i)=\e^{(a_i-s_i) z_i}\frac{-z_i}{1-\e^{z_i}},
$$
and we substitute  $z_i=\langle \xi,v_i\rangle$. We obtain
\begin{multline*}
\e^{\langle \xi, a-s\rangle}\prod_{i=1}^d \frac{-\langle
\xi,v_i\rangle}{1-\e^{\langle \xi,v_i\rangle}}\equiv \sum_{I\in
\Jposet{d}{d_0}}\lambda(I)\e^{\langle
\xi,a_{I^c}-s_{I^c}\rangle}\prod_{i\in I^c}\frac{-\langle
\xi,v_i\rangle}{1-\e^{\langle \xi,v_i\rangle}}\\ \mbox{ mod terms of
}  \xi\mbox{-degree } \geq d-d_0+1.
\end{multline*}
We multiply both sides first by $\e^{\langle \xi, s\rangle}$; because this is
analytic in~$\xi$ and thus of non-negative~$\xi$-degree, the identity modulo
terms of high~$\xi$-degree still holds true.  Then we multiply by $1/\prod_{i=1}^d
(-\langle \xi,v_i\rangle)$, which is homogeneous of degree~$-d$ in~$\xi$. We obtain
\begin{multline}\label{un-point-dans-la-boite}
\e^{\langle \xi, a\rangle}\prod_{i=1}^d \frac{1}{1-\e^{\langle
\xi,v_i\rangle}}\equiv \sum_{I\in \Jposet{d}{d_0}}\lambda(I)\e^{\langle
\xi,a_{I^c}\rangle}\prod_{i\in I^c}\frac{1}{1-\e^{\langle
\xi,v_i\rangle}} \e^{\langle \xi,
s_I\rangle}\prod_{i\in I} \frac{-1}{\langle \xi,v_i\rangle}\\
\mbox{ mod terms of }  \xi\mbox{-degree } \geq -d_0+1.
\end{multline}
Now, we sum up  equalities (\ref{un-point-dans-la-boite}) when $a$
runs over the set $(s+\b)\cap \lattice$ of  integral points in the
fundamental parallelepiped $s+\b$ of the affine cone $s+\c$. On the left-hand side we obtain
$$
\sum_{a\in (s+\b)\cap \lattice}\e^{\langle \xi, a\rangle}\prod_{i=1}^d
\frac{1}{1-\e^{\langle \xi,v_i\rangle}}.
$$
By Lemma \ref{lemma-cell-generating_function}, this is precisely
$S(s+\c,\Lambda)(\xi)$. On the right-hand side, for each $I$, we have a sum over $a\in
(s+\b)\cap \lattice$ of the function
$$\e^{\langle
\xi,a_{I^c}\rangle}\prod_{i\in I^c}\frac{1}{1-\e^{\langle
\xi,v_i\rangle}} \e^{\langle \xi,
s_I\rangle}\prod_{i\in I} \frac{-1}{\langle \xi,v_i\rangle},$$
which depends only on the
projection $a_{I^c}$ of $a$ in the decomposition $a=a_I+a_{I^c}\in
L_I\oplus L_{I^c}$. When $a$ runs over $(s+\b)\cap \lattice$, its
projection $a_{I^c}$ runs over $(s_{I^c}+\b_{I^c})\cap
\lattice_{I^c}$. Let us show that the fibers have the same number of
points, equal to $\vol_{L_I\cap\Lambda}(\b_I)$. For a given $a_{I^c}\in
(s_{I^c}+\b_{I^c})\cap \lattice_{I^c}$, let us compute the fiber
$$ \{\,y\in (s+\b)\,\cap \lattice :  y_{I^c}=a_{I^c}\,\}.
$$
Fix  a point $a_I+ a_{I^c}$ in this fiber. Then $y=a_{I^c}+y_I$ lies
in the fiber if and only if $y_{I}-a_{I}\in (s_{I}-
a_{I}+\b_{I})\,\cap \lattice$. By Lemma \ref{lemma-cell}(ii), the
cardinality of the fiber is equal to $\vol_{L_I\cap\Lambda}(\b_{I})$. Thus we obtain
\begin{multline}\label{approx-cone-last}
S(s+\c)(\xi)\equiv \\
\sum_{I\in
\Jposet{d}{d_0}}\lambda(I)S(s_{I^c}+\b_{I^c})(\xi)
  \prod_{i\in I^c}\frac{1}{1-\e^{\langle
\xi,v_i\rangle}} \e^{\langle \xi,
s_I\rangle}\vol_{L_I\cap\Lambda}(\b_{I})\prod_{i\in I} \frac{-1}{\langle \xi,v_i\rangle}\\
\mbox{ mod terms of }  \xi\mbox{-degree } \geq -d_0+1.
\end{multline}
By Proposition~\ref{petite-somme} and Lemmas
\ref{lemma-integral} and~\ref{lemma-cell-generating_function},  the term
corresponding to an $I\in\Jposet{d}{d_0}$ in the
right-hand side of (\ref{approx-cone-last}) is precisely
$$
S^{L_I}(s+\c,\Lambda)(\xi) = S(s_{I^c}+\c_{I^c},\lattice_{I^c})(\xi)\,I(s_I+\c_I,L_I\cap \lattice)(\xi),
$$
which completes the proof.
\end{proof}

\begin{remark}
  For $d_0=0$, we obtain the poset $\Jposet{d}{0}$ of all subsets
  of~$\{1,\dots,d\}$. The unique patching function on $\Jposet{d}{0}$ is given
  by 
  $\lambda(\emptyset)=1$ and $\lambda(I)=0$ for all~$I\neq\emptyset$.  Then
  the approximation is trivial, i.e., 
  $S(s+\c,\Lambda)(\xi) = A^\lambda(s+\c,\Lambda)(\xi)$.
\end{remark}

{\small
\begin{example}
Let $\c$ be the standard cone in $\R^2$, and $d_0=1$. Thus
$\Jposet{2}{1}$ consists of three subsets, $\{1\}, \{2\}$ and $\{1,2\}$.
A patching function is given by $\lambda(\{i\})=1$ and
$\lambda(\{1,2\})=-1$. We consider the affine cone $s+\c$ with $s=(-\frac12,-\frac12)$.
Let $\xi=(\xi_1,\xi_2)$. We have
\begin{align*}
  I(s_i+\c_{\{i\}})(\xi) &= \frac{-\e^{-\xi_i/2}}{\xi_i}, &&&
  I(s+\c)(\xi)&=\frac{\e^{-\xi_1/2-\xi_2/2}}{\xi_1\xi_2},\\
  S(s_i+\c_{\{i\}})(\xi) &= \frac{1}{1-\e^{\xi_i}},  &&&
  S(s+\c)(\xi)&=\frac{1}{(1-\e^{\xi_1})(1-\e^{\xi_2})}.
\end{align*}
 The approximation theorem claims that
\begin{multline*}
\frac{1}{(1-\e^{\xi_1})(1-\e^{\xi_2})}\equiv
\frac{1}{1-\e^{\xi_2}}\cdot \frac{-\e^{-\xi_1/2}}{\xi_1}+
\frac{1}{1-\e^{\xi_1}}\cdot \frac{-\e^{-\xi_2/2}}{\xi_2}-
\frac{\e^{-\xi_1/2-\xi_2/2}}{\xi_1\xi_2}
\\\mbox{ mod terms of } \xi\mbox{-degree } \geq 0.
\end{multline*}
Indeed, the difference between the two sides is equal to
$$
\Bigl(\frac{1}{1-\e^{\xi_1}}+\frac{\e^{-\xi_1/2}}{\xi_1}\Bigr)\Bigl(\frac{1}{1-\e^{\xi_2}}+\frac{\e^{-{\xi_2/2}}}{\xi_2}\Bigr)
$$
which is analytic near $0$.
\end{example}}

\subsection{An explicit patching function}

Next we compute an explicit patching function on $\Jposet{d}{d_0}$. It is
related to the M\"obius function of the poset $\Jposet{d}{d_0}$, so we call
it the \emph{M\"obius patching function} and denote it by
$\lambda_{\Moebius}$.  We will denote the corresponding patched generating
function~$A^{\lambda_{\Moebius}}(s+\c,\Lambda)$ 
%% with respect to the canonical patching function~$\lambda_{\Moebius}$ on
%% $\Jposet{d}{d_0}$
by~$A_{\geq d_0}(s+\c,\Lambda)$.
%%  We denote the
%% binomial coefficient $\frac{p!}{k! (p-k)!}$ by $\binom{p}{k}$.  
%%%%% omitted, standard notation --Matthias
\begin{proposition}\label{rho} For $I\in \Jposet{d}{d_0}$, let
$$
\lambda_{\Moebius}(I)= (-1)^{|I|-d_0}\binom{|I|-1}{d_0-1}.
$$
Then $\lambda_{\Moebius}$ is a patching function on $\Jposet{d}{d_0}$.
\end{proposition}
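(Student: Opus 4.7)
My plan is to verify the patching condition by using the equivalent characterization (ii) of Lemma~\ref{patching_function}: it suffices to check that for every $I_0\in \Jposet{d}{d_0}$,
\[
\sum_{\substack{I\subseteq I_0 \\ |I|\geq d_0}} \lambda_{\Moebius}(I) = 1.
\]
The summand $(-1)^{|I|-d_0}\binom{|I|-1}{d_0-1}$ depends on $I$ only through~$|I|$, so setting $n=|I_0|$ and grouping subsets by cardinality reduces the problem to the single numerical identity
\[
\sum_{k=d_0}^{n}(-1)^{k-d_0}\binom{n}{k}\binom{k-1}{d_0-1}=1
\qquad\text{for all } n\geq d_0.
\]
This is the entire content of the proposition; everything else has already been set up.

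To prove the identity, I would substitute $j=k-d_0$ and rewrite it as
\[
\sum_{j=0}^{n-d_0}(-1)^{j}\binom{n}{d_0+j}\binom{d_0-1+j}{j}=1,
\]
and then recognize this sum as a convolution coefficient. Indeed, $\binom{n}{d_0+j}=\binom{n}{(n-d_0)-j}$ is the coefficient of $x^{(n-d_0)-j}$ in $(1+x)^n$, while
\[
\sum_{j\geq 0}(-1)^{j}\binom{d_0-1+j}{j}x^{j}=(1+x)^{-d_0}
\]
by the standard negative binomial expansion. Hence the sum equals the coefficient of $x^{n-d_0}$ in $(1+x)^{n}\cdot (1+x)^{-d_0}=(1+x)^{n-d_0}$, which is $\binom{n-d_0}{n-d_0}=1$. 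Boundary cases ($n=d_0$, or $d_0=0$ where one must be slightly careful with the definition of $\binom{|I|-1}{-1}$ so that only $I=\emptyset$ contributes) are checked directly.

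The argument is almost entirely mechanical once the right characterization of a patching function is selected; there is no geometric obstacle, since Lemma~\ref{patching_function} has already translated~\eqref{Barvinok-patchfunction} into a down-set summation condition. The only thing to get right is the binomial identity, and using generating functions as above seems the cleanest route; alternatively one could prove the identity by induction on $n$ using Pascal's rule, or by interpreting it as a M\"obius inversion on the poset of subsets of size~$\geq d_0$ (which is essentially how the name ``M\"obius patching function'' is justified). I would include the generating function derivation as it is both short and transparent.
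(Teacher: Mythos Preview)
Your proof is correct and takes a genuinely different route from the paper's. You verify condition~(ii) of Lemma~\ref{patching_function} directly, reducing everything to the single binomial identity
\[
\sum_{k=d_0}^{n}(-1)^{k-d_0}\binom{n}{k}\binom{k-1}{d_0-1}=1,
\]
which you dispatch cleanly via the convolution $(1+x)^n\cdot(1+x)^{-d_0}=(1+x)^{n-d_0}$.

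The paper instead works with condition~(iv). It introduces an auxiliary parameter~$t$, writes $\e^{z_1+\cdots+z_d}=\prod_i\bigl((1-t)+t\,\e^{z_i}\bigr)\big|_{t=1}$, expands in powers of~$t$, and truncates at $t^{k_0}$. This produces an expression $\sum_{|I|\geq d_0} f(|I|)\,\e^{z_{I^c}}$ with $f(j)=\sum_{m=d_0}^{j}(-1)^{j-m}\binom{j}{m}$, and the telescoping Pascal recursion then yields $f(j)=(-1)^{j-d_0}\binom{j-1}{d_0-1}$. So the paper's argument is more \emph{constructive}: it discovers the formula for~$\lambda_{\Moebius}$ rather than merely verifying it. Your argument, by contrast, is a pure verification, but it is shorter, and the generating-function step is transparent. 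Both are perfectly valid; your route is the natural one once the formula is already on the table, while the paper's route explains where the formula comes from.

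Your remark on the boundary case $d_0=0$ is apt: the formula requires the convention $\binom{-1}{-1}=1$ and $\binom{k-1}{-1}=0$ for $k\geq1$, matching the paper's separate remark that the unique patching function on $\Jposet{d}{0}$ is supported on $I=\emptyset$.
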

\begin{proof}
We prove that $\lambda_{\Moebius}$ satisfies Condition (iv) of Lemma
\ref{patching_function}.
 The trick is to write
$\e^z= 1+t( \e^z -1)|_{t=1}$. Thus
$$
  \e^{z_1+\dots + z_d}=\prod_1^d \e^{z_i} =\prod_{i=1}^d \bigl(1+t( \e^{z_i} -1)\bigr)\Big|_{t=1}
$$
Let us consider $P(t): = \prod_{i=1}^d \bigl(1+t( \e^{z_i} -1)\bigr)= \sum_{q=0}^d
C_q(z)t^q$ as a polynomial in the indeterminate $t$. As $ \e^{z_i}
-1$ is a sum of terms of $z_i$-degree $>0$, we have
\begin{equation}\label{exponential}
 \e^{z_1+\dots + z_d}\equiv \sum_{q=0}^{k_0} C_q(z) \mbox{ mod terms of  } z\mbox{-degree } \geq k_0+1.
\end{equation}
Next, we write
$$
P(t)= \prod_1^d \bigl(1+t( \e^{z_i} -1)\bigr)= \prod_1^d \bigl((1-t) + t \e^{z_i}\bigr).
$$
By expanding the product, we obtain
$$
C_q(z)= \sum_{ |K|\leq q}(-1)^{q-|K|}\binom{d-|K|}{q-|K|}\e^{z_K}.
$$
Summing up these coefficients for $0\leq q\leq k_0=d-d_0$, we obtain
$$
\sum_{q=0}^{k_0} C_q(z)=\sum_{ |K|\leq k_0}\left(\sum_{q=|K|}^{k_0}
(-1)^{q-|K|}\binom{d-|K|}{q-|K|}\right )\e^{z_K}.
$$
By substituting $K=I^c$ and $d-q=m$, we obtain
$$
\sum_{q=0}^{k_0} C_q(z)= \sum_{|I|\geq d_0}f(|I|)\e^{z_{I^c}},
$$
with
$$
f(j)=\sum_{m=d_0}^{j}(-1)^{j-m}\binom{j}{j-m}=\sum_{m=d_0}^{j}(-1)^{j-m}\binom{j}{m}.
$$
The truncated binomial sum $f(j)$ is easy to compute, using the
recursion relation $\binom{j}{m}= \binom{j-1}{m-1}+ \binom{j-1}{m}$.
We obtain
$$
f(j)= (-1)^{j-d_0}\binom{j-1}{d_0-1}.
$$
Thus, $\lambda_{\Moebius}(I)=f(|I|)$ satisfies Condition (ii) for a
patching function.
\end{proof}
\begin{remark}
Proposition \ref{rho} can also be deduced from results in
\cite{Bjorner-Lovasz-Yao}.
\end{remark}

\section{Computation of the patched generating function
  %of a simplicial cone
} 
\label{section-patchedgenfun-computation}

In this section, we show that if~$k_0 = d - d_0$ is fixed, the patched
generating function~$A_{\geq d_0}(s+\c,\Lambda)$ can be efficiently computed for
a simplicial cone~$s+\c$.  This will be a consequence of Barvinok's
polynomial-time decomposition of cones in fixed dimension \cite{bar,barvinokzurichbook}.  We exhibit the
dependence of the patched generating function on the vertex~$s$ explicitly
as a ``step function'' in two useful ways, using the ``ceiling'' function $\lceil\cdot\rceil$ and the
``fractional part'' function~$\{\cdot\}$, respectively. 

\smallbreak

We will write the patched generating function using the following analytic
function. 
\begin{definition}
  Let
  \begin{equation}\label{bernoulli-genfun}
    T(\tau ,x)=\e^{\tau x  }\frac{x}{1-\e^x} = -\sum_{n=0}^\infty B_n(\tau ) \frac{x^n}{n!},
  \end{equation}
  where $B_n(\tau)$ are the Bernoulli polynomials.
\end{definition}

We start with the following result.
\begin{theorem}[Short formula for~$S^{L_I}(s+\c,\Z^d)(\xi)$ %%
  %% , where $L$ is parallel to a face of the cone~$\c$
  for varying~$s$%
  ]\label{th:problem1}
Fix a non-negative  integer $\co$.
There exists a polynomial time algorithm for the following problem.
Given the following {input}:
\begin{inputlist}
\item a number $d$ in unary encoding,

\item a simplicial cone $\c = \c(v_1,\dots,v_d) \subset \R^d$,
  represented by the primitive vectors $v_1,\dots,v_d\in\Z^d$ in binary
   encoding,

 \item a subspace $L_I= \lin(v_i : i\in I)\subseteq \R^d$ of codimension $\co $, represented by 
   an index set $I \subseteq \{1,\dots,d\}$ of cardinality $d_0 = d-\co$,
\end{inputlist}
compute the following {output} in binary encoding:
\begin{outputlist}
\item a finite set~$\Gamma$,
\item for every $\gamma$ in $\Gamma$, integers~$\alpha^{(\gamma)}$,
  rational vectors~$\eta ^{(\gamma)}_i$ 
  and $w^{(\gamma)}_i$ for $i=1,\dots,d$, where $\eta ^{(\gamma)}_i\in \Z^d$ for $i\in I^c$
\end{outputlist}
such that for every $s\in\Q^d$, we have the following equality of meromorphic
functions of~$\xi$:
\begin{subequations}\label{problem1-shortformula-both}
  \begin{align}
    &S^{L_I}(s+\c,\Z^d)(\xi)
    %% = S(s_{I^c}+\c_{I^c})(\xi)\,I(s_I+\c_I)(\xi)\\
    \notag\\
    &\quad = \sum_{\gamma\in \Gamma} \alpha^{(\gamma)} \prod_{i\in I^c}
    T\bigl(\bigl\lceil\bigl\langle \eta _i^{(\gamma)}, s\bigr\rangle\bigr\rceil,
    \bigl\langle
    \xi,w^{(\gamma)}_i\bigr\rangle\bigr) \notag\\
    &\qquad\qquad \cdot \prod_{i\in I} \exp\bigl({\bigl\langle \eta _i^{(\gamma)},
      s\bigr\rangle \bigl\langle \xi, w_i^{(\gamma)}\bigr\rangle}\bigr)
    \cdot\frac{1}
    {\prod_{i=1}^d\bigl\langle \xi,w^{(\gamma)}_i\bigr\rangle} \label{problem1-shortformula}\\
    &\quad= \e^{\langle \xi,s\rangle} \sum_{\gamma\in \Gamma} \alpha^{(\gamma)} 
    \prod_{i\in I^c} T\bigl(\bigl\{ -\bigl\langle \eta _i^{(\gamma)},
    s\bigr\rangle\bigr\}, \bigl\langle \xi,w^{(\gamma)}_i\bigr\rangle \bigr) \cdot\frac{1} {\prod_{i=1}^d\bigl\langle
      \xi,w^{(\gamma)}_i\bigr\rangle}. \label{problem1-shortformula-variant}
  \end{align}
\end{subequations}
\end{theorem}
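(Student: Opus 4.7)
The plan is to apply Proposition~\ref{petite-somme}, which factors the intermediate generating function as
\[
S^{L_I}(s+\c,\Z^d)(\xi) = S(s_{I^c}+\c_{I^c},\lattice_{I^c})(\xi)\cdot I(s_I+\c_I,L_I\cap \Z^d)(\xi),
\]
so that each factor can be handled separately. The integral factor has a one-line closed form by Lemma~\ref{lemma-integral}: it equals $\e^{\langle \xi,s_I\rangle}(-1)^{d_0}\vol_{L_I\cap\Z^d}(\b_I)/\prod_{i\in I}\langle \xi,v_i\rangle$, and writing $s_I=\sum_{i\in I}\langle \eta_i,s\rangle v_i$ with rational coordinate functionals $\eta_i\in\Q^d$ (characterized by $\langle\eta_i,v_j\rangle=\delta_{ij}$ for $j\in I$ and $\eta_i|_{L_{I^c}}=0$) exposes the linear dependence on $s$ in the form required, after setting $w_i^{(\gamma)}:=v_i$ for $i\in I$.

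For the discrete factor, the key observation is that $\c_{I^c}$ lives in the quotient $V/L_I$, which has \emph{fixed} dimension $k_0 = d-d_0$. First I compute a $\Z$-basis of the projected lattice $\lattice_{I^c}$, for example via Hermite normal form starting from a basis of $L_I\cap\Z^d$ extended to one of $\Z^d$. Then I apply Barvinok's signed decomposition into unimodular cones with respect to $\lattice_{I^c}$; in fixed dimension $k_0$ this runs in polynomial time and yields a polynomial-size family
\[
[\c_{I^c}]\equiv \sum_{\gamma\in\Gamma}\epsilon_\gamma\,[\c_\gamma]\pmod{\text{indicator functions of cones containing lines}},
\]
where each $\c_\gamma$ is unimodular with primitive generators $\{w_i^{(\gamma)}\}_{i\in I^c}$ that form a $\Z$-basis of $\lattice_{I^c}$. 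For each unimodular summand the standard fundamental-parallelepiped formula gives
\[
S(s_{I^c}+\c_\gamma,\lattice_{I^c})(\xi) = \prod_{i\in I^c}\frac{\e^{\lceil t_i^{(\gamma)}\rceil\,\langle \xi,w_i^{(\gamma)}\rangle}}{1-\e^{\langle \xi,w_i^{(\gamma)}\rangle}},
\]
with $t_i^{(\gamma)}$ the coordinates of $s_{I^c}$ in the basis $\{w_i^{(\gamma)}\}_{i\in I^c}$. Because this is a $\Z$-basis of the \emph{projected} lattice, the coordinate functionals are $\Z$-valued on $\Z^d$, hence $t_i^{(\gamma)}=\langle \eta_i^{(\gamma)},s\rangle$ for some $\eta_i^{(\gamma)}\in\Z^d$, as required.

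The remainder is bookkeeping: using $\e^{\tau x}/(1-\e^x)=T(\tau,x)/x$ to rewrite each factor as $T(\lceil t_i^{(\gamma)}\rceil,\langle\xi,w_i^{(\gamma)}\rangle)/\langle\xi,w_i^{(\gamma)}\rangle$, then multiplying by the integral piece and absorbing $\epsilon_\gamma(-1)^{d_0}\vol_{L_I\cap\Z^d}(\b_I)\in\Z$ into $\alpha^{(\gamma)}$, yields~\eqref{problem1-shortformula}. The variant~\eqref{problem1-shortformula-variant} drops out from $\lceil t\rceil=t+\{-t\}$ combined with $T(\tau+\delta,x)=\e^{\delta x}T(\tau,x)$: after splitting each ceiling this way, the exponentials combine via $\sum_{i=1}^d\langle\eta_i^{(\gamma)},s\rangle\langle\xi,w_i^{(\gamma)}\rangle = \langle\xi,s\rangle$ (since $\sum_i \langle\eta_i^{(\gamma)},s\rangle w_i^{(\gamma)} = s_I+s_{I^c}=s$), factoring out as $\e^{\langle\xi,s\rangle}$.

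I expect the main pitfall, both conceptually and for correctness, to be insisting on the \emph{projected} lattice $\lattice_{I^c}$ rather than the (in general smaller) sublattice $L_{I^c}\cap\Z^d$: only the projected lattice makes the coordinate functionals $\eta_i^{(\gamma)}$ land in $\Z^d$, and the two lattices genuinely differ, as the example with $v_1=(1,0)$, $v_2=(1,2)$, $I=\{2\}$ in the excerpt shows. Once the projected lattice basis is in hand, every remaining step---Hermite normal form, Barvinok's decomposition in fixed dimension~$k_0$, and the symbolic assembly---runs in polynomial time in the binary input size, which establishes the complexity claim.
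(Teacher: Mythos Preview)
Your proposal is correct and follows essentially the same approach as the paper's proof: factor via Proposition~\ref{petite-somme}, handle the integral piece by Lemma~\ref{lemma-integral} with the dual-basis functionals, pass to the projected lattice $\lattice_{I^c}$ via Hermite normal form, apply Barvinok's decomposition in the fixed dimension~$k_0$, and then assemble each unimodular summand using the fundamental-parallelepiped formula and the identity $\lceil t\rceil = t + \{-t\}$. Your justification that $\eta_i^{(\gamma)}\in\Z^d$ for $i\in I^c$ (because the $w_i^{(\gamma)}$ form a $\Z$-basis of the projected lattice, so the coordinate functionals take integer values on~$\Z^d$) is in fact slightly more explicit than the paper's, and your emphasis on the distinction between $\lattice_{I^c}$ and $L_{I^c}\cap\Z^d$ correctly identifies the one genuinely delicate point.
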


Of course, for $I=\emptyset$ we have $L=\{0\}$, and so we recover formulas
for $S(s+\c,\Z^d)(\xi)$.  If we set $I=\{1,\dots,d\}$, then $L=\R^d$, and we
get formulas for $I(s+\c,\Z^d)(\xi)$.

\begin{remark}
Consider the term corresponding to $\gamma\in\Gamma$
in \eqref{problem1-shortformula} or \eqref{problem1-shortformula-variant}.
As it will follow from the proof, the vector $w_i^{(\gamma)}$ for $i\in I$ is
just the original vector~$v_i$, and the collection $w_i^{(\gamma)}$,
$i=1,\dots,d$,  forms a basis of~$\R^d$. Furthermore  the vectors
$w_i^{(\gamma)}$, with $i\in I^c$, are in $L_{I^c}$ and   form  a basis of the
projected lattice. 
The vectors $\eta _i^{(\gamma)}$, $i=1,\dots,d$,  are the dual (biorthogonal)
vectors to the elements $w_j^{(\gamma)}$,  $j=1,\dots,d$, i.e., $\bigl\langle
\eta _i^{(\gamma)}, w_j^{(\gamma)}\bigr\rangle = \delta_{i,j}$.  
Thus we only need to compute the integers $\alpha^{(\gamma)}$ and the elements
$w_i^{(\gamma)}$ where $i\in I^c$. 
\end{remark}
\begin{remark}
Consider the term corresponding to $\gamma\in\Gamma$ in \eqref{problem1-shortformula-variant}.
As the vectors $w_i^{(\gamma)}$, $i\in I^c$, form a basis of the projected
lattice, we may identify $V/(L_I+\lattice)$ to $\bigoplus_{i\in I^c}
[0,1\mathclose[ \,w_i^{(\gamma)}$.  
Define $$s^{(\gamma)}=\sum_{i\in I^c}\bigl\{-\bigl\langle \eta _i^{(\gamma)},s\bigr\rangle\bigr\}\, w_i^{(\gamma)}.$$
As the $\eta _i^{(\gamma)}$ for $i\in I^c$ are integer vectors, and
$\bigl\langle \eta _i^{(\gamma)},v_j\bigr\rangle=0$ if $j\in I$,
we can think of $s\mapsto s^{(\gamma)}$ as a  linear  
map on  the torus $V/(\Lambda+L_I)$  with integer coefficients.
The point $s+s^{(\gamma)}$ is in $\bigoplus_{i\in I^c} \Z w_i^{(\gamma)} \oplus
\bigoplus_{i\in I} \R v_i$, and  formula~\eqref{problem1-shortformula-variant} reads also 
\begin{equation}
  S^{L_I}(s+\c,\Z^d)(\xi)
=\sum_{\gamma\in \Gamma} \alpha^{(\gamma)} \e^{\langle \xi,s+s^{(\gamma)}\rangle}
     \frac{1} {\prod_{i\in I^c}\bigl(1-\e^{\langle \xi, w_i^{(\gamma)}\rangle}\bigr)}
    \frac{1}{\prod_{i\in I} \langle \xi, v_i\rangle 
      \vphantom{\e^{\langle \xi, w_i^{(\gamma)}\rangle}} %%% vphantom only!
    }.
  \end{equation}
\end{remark}

Now we prove the theorem.

\begin{proof}[Proof of Theorem~\ref{th:problem1}]
Let us describe the algorithm along the proof.  Let $\Lambda=\Z^d$.
By Proposition~\ref{petite-somme}, 
\begin{equation}
  S^{L_I}(s+\c,\Lambda)(\xi) =
  S(s_{I^c}+\c_{I^c},\lattice_{I^c})(\xi)\, I(s_I+\c_I,L_I\cap \lattice)(\xi).
\end{equation}
We first discuss $I(s_I+\c_I, L_I\cap \lattice)$. We have
\begin{equation}
  I(s_I+\c_I, L_I\cap \lattice)(\xi) = \e^{\la \xi,s_{I} \ra}\vol_{L_I\cap \lattice}(\b_{I}) \prod_{j\in I} \frac{-1}{\la \xi,v_j\ra}.\label{coeff-2}
\end{equation}
Using linear functionals~$\eta _i\in \Q^d$, $i\in I$ (the coordinate functions with respect
to the basis $v_i$), write $s_I = \sum_{i\in I}
\langle \eta _i, s\rangle v_i$.  The $\eta _i$ can be read off in polynomial time from
the inverse of the matrix whose columns are~$v_1,\dots,v_d$. Then $\e^{\la
  \xi,s_{I} \ra}$ takes the form 
\begin{equation}
  \e^{\la \xi,s_{I} \ra} = \prod_{i\in I} \exp(\langle \eta _i, s\rangle \langle\xi, v_i\rangle ).
\end{equation}

Now, to handle the factor $S(s_{I^c}+\c_{I^c}, \lattice_{I^c})$, 
note that $\c_{I^c}\subset L_{I^c}$ is a $\co$-dimensional cone. 
By using a Hermite normal form computation, which is polynomial time
\cite{kannan-bachem:79}, we can compute a linear change of variables which
replaces the projected lattice~$\Lambda_{I^c}$ on $L_{I^c}$ by $\Z^{\co}$.
Then, using Barvinok's decomposition \cite{bar}, we decompose it into a  family of  cones
which are {unimodular},
\begin{equation}\label{coeff-3}
  \charfun[big]{\c_{I^c}} \equiv \sum_{m\in M} \epsilon_m \charfun[big]{\c^{(m)}_{I^c}}
  \quad\text{(modulo cones containing lines)},
\end{equation}
where $\epsilon_m\in\{\pm1\}$.
As $\co$ is fixed, this decomposition can be done by a polynomial time
algorithm. This step  is of course  crucial  with respect to the
efficiency of the whole algorithm.

Changing again notations, we now denote by $\c=\c(\{w_i\}_{i\in I^c})$ one of
these unimodular cones~$\smash[t]{\c^{(m)}_{I^c}}\subset L_{I^c}$, with
primitive generators $w_i$, and also write $\epsilon=\epsilon_m$. 
We remark that the vectors $w_i$, $i\in I^c$, generate the projected lattice
on~$L_{I^c}$.  Using linear functionals~$\eta _i\in \Q^d$, $i\in I^c$, write
$s_{I^c} = \sum_{i\in I^c} \langle \eta _i, s\rangle w_i$. 
Actually, we have $\eta _i\in\Z^d$.  By letting $w_i = v_i$ for the other indices
$i\in I$, we can write
\begin{equation}
  \label{eq:s-reunited}
  s = \sum_{i\in I} \langle \eta _i, s\rangle v_i +  \sum_{i\in I^c} \langle \eta _i,
  s\rangle w_i 
  = \sum_{i=1}^d \langle \eta _i, s\rangle w_i.
\end{equation}

Let $s'_{I^c}$ be the unique lattice point in the fundamental parallelepiped of the
cone $s_{I^c}+\c$.  We have
\begin{equation}
  s'_{I^c} = \sum_{i\in I^c} \lceil\langle \eta _i, s\rangle\rceil w_i.
\end{equation}
Using this, we obtain the generating function from
Lemma~\ref{lemma-cell-generating_function} as
$$
S(s_{I^c}+\c)(\xi)=\frac{\e^{\langle \xi,s'_{I^c}\rangle}}{\prod_{i\in I^c} (1-\e^{\langle \xi,w_i\rangle})}.
$$
Thus finally, using \eqref{coeff-2} we have the meromorphic function
\begin{equation}\label{unimodular}
\epsilon \vol_{L_I\cap \lattice}(\b_{I}) (-1)^{|I|}
\cdot \frac{\e^{\langle \xi,s'_{I^c}\rangle}}{\prod_{i\in I^c} (1-\e^{\langle \xi,w_i\rangle})}\cdot\frac{\e^{\la \xi,s_I \ra}}{\prod_{j\in I}{\la
\xi,v_j\ra}}.
\end{equation}
Then \eqref{unimodular} is now written
\begin{equation}\label{unimodular-2}
  \alpha \prod_{i\in I^c} T(\lceil\langle \eta _i, s\rangle\rceil, \langle
  \xi,w_i\rangle)\cdot
  \prod_{i\in I} \exp(\langle \eta _i, s\rangle \langle\xi, v_i\rangle )
  \cdot \frac{1}
  {\prod_{i=1}^d\langle \xi,w_i\rangle},
\end{equation}
where $\alpha$ collects the multiplicative constants in~\eqref{unimodular}.
Collecting these terms gives the desired short formula~\eqref{problem1-shortformula}.

To derive the second form, we note $\lceil\langle \eta _i, s\rangle\rceil =
\langle \eta _i, s\rangle + \{ -\langle \eta _i, s\rangle \}$, so we can write
\begin{equation}
  T(\lceil\langle \eta _i, s\rangle\rceil, \langle
  \xi,w_i\rangle) = T(\{ -\langle \eta _i, s\rangle\} , \langle
  \xi,w_i\rangle) \exp(\langle \eta _i, s\rangle \langle
  \xi,w_i\rangle).
\end{equation}
Thus the term~\eqref{unimodular-2} can be written as
\begin{equation}\label{unimodular-2-variant}
  \alpha
  \prod_{i\in I^c} T(\{ -\langle \eta _i, s\rangle\} , \langle
  \xi,w_i\rangle) \cdot
  \e^{\langle \xi,s\rangle}
  \cdot \frac{1}
  {\prod_{i=1}^d\langle \xi,w_i\rangle},
\end{equation}
using~\eqref{eq:s-reunited}.  Collecting these terms gives the short
formula~\eqref{problem1-shortformula-variant}. 
\end{proof}

{\small
\begin{example}
Let us give an example of the output of our algorithm, in a small example.
Consider the $3$-dimensional cone with  rays given by the vectors
$(1,1,1)$, $(1,-1,0)$, $(1,1,0)$. This cone is not unimodular.
 We consider the affine cone $s+\c$.
  Our algorithm described in Theorem  \ref{th:problem1} computes any   intermediate generating function $S^L(s+\c,\Z^3)$ when $L$ is a  linear span of a face of $\c$. For $L=\{0\}$ (indexed by the empty set $I$), we obtain the 
meromorphic function $S(s+\c,\Z^3)(\xi)$ (the discrete generating function of the cone $s+\c$). Here $S(s+\c,\Z^3)(\xi)$ depends of $s=(s_1,s_2,s_3)$ and $\xi=(\xi_1,\xi_2,\xi_3)$ and is given by:
  \begin{multline*}
    \exp
    \left( s_{{1}}\xi_{{1}}+s_{{2}}\xi_{{2}}+s_{{3}}\xi_{{3}} \right)\\
    \left( -{\frac {T \left( \left\{ -s_{{3}}+s_{{2}} \right\}
            ,-\xi_{{1}}-\xi_{{2}} \right) T \left( \left\{ -s_{{1}}+s_{{2}}
            \right\} ,\xi_{{1}} \right) T \left( \left\{ -s_{{3}} \right\}
            ,\xi_{{1}}+\xi_{{2}}+\xi_{{3}} \right) }{ \left(
            -\xi_{{1}}-\xi_{{2}} \right) \xi_{{1}} \left( \xi
            _{{1}}+\xi_{{2}}+\xi_{{3}} \right) }} \right. \\
    \left. \qquad\quad{}+{\frac {T \left( \left\{ -s_{{3}}+s_{{2}} \right\}
            ,\xi_{{1}}-\xi_{{2}} \right) T \left( \left\{
              2\,s_{{3}}-s_{{2}}-s_{{1}} \right\} ,\xi_{{1}} \right) T \left(
            \left\{ -s_{{3}} \right\} ,\xi_{{1}}+\xi_{{2}}+\xi_{{3 }} \right)
        }{ \left( \xi_{{1}}-\xi_{{2}} \right) \xi_{{1}} \left(
            \xi_{{1}}+\xi_{{2}}+\xi_{{3}} \right) }} \right).
  \end{multline*}
%% \end{subequations}

If $L=\R v_1$ is the subspace of dimension $1$   generated by the edge $v_1=(1,1,1)$ of the cone $\c$ (so that $L$ is indexed by the subset $I=\{1\}$ of $\{1,2,3\}$), the intermediate generating function
  $S^L(s+\c,\Z^3)$ is given by:

%% \begin{subequations}
%%   \begin{multline}
%%     -{\frac {T \left( \lceil s_{{3}}-s_{{2}} \rceil ,-\xi_{{1}}-\xi_{{2}}
%%         \right) T \left( \lceil s_{{1}}-s_{{2}} \rceil ,\xi_{{1}} \right) \exp
%%         \left( s_{{3}}\xi_{{1}}+s_{{3}}\xi_{{2}}+s_{{3}}\xi_{{3}} \right) }{
%%         \left( -\xi_{{1}}-\xi_{{2}}
%%         \right) \xi_{{1}} \left( -\xi_{{1}}-\xi_{{2}}-\xi_{{3}} \right) }} \\
%%     + {\frac {T \left( \lceil s_{{3}}-s_{{2}} \rceil ,\xi_{{1}}-\xi_{{2}}
%%         \right) T \left( \lceil s_{{1}}+s_{{2}}-2\,s_{{3}} \rceil ,\xi_{{1}}
%%         \right) \exp \left( s_{{3}}\xi_{{1}}+s_{{3}}\xi_{{2}}+s_{{3}}\xi_{{3}}
%%         \right) }{ \left( \xi_{{1}}-\xi_{{2}} \right) \xi_{{1}} \left(
%%           -\xi_{{1}}- \xi_{{2}}-\xi_{{3}} \right) }}.
%%   \end{multline}
%%   Finally, the alternative presentation, using Formula
%%   \ref{problem1-shortformula-variant} instead, is given by
  \begin{multline*}
    \exp \left( s_{{1}}\xi_{{1}}+s_{{2}}\xi_{{2}}+s_{{3}}\xi_{{3}} \right)
    \left( -{\frac { T \left( \left\{ -s_{{3}}+s_{{2}} \right\}
            ,-\xi_{{1}}-\xi_{{2}} \right) T \left(
            \left\{ -s_{{1}}+s_{{2}} \right\} ,\xi_{{1}} \right) }{ \left( -\xi_{{1}}-\xi_{{2}} \right) \xi_{{1}}\left( -\xi_{{1}}-\xi_{{2}}-\xi_{{3}} \right) }} \right. \\
    \left. {}+{\frac {T \left( \left\{ -s_{{3}}+s_{{2}} \right\}
            ,\xi_{{1}}-\xi_{{2}} \right) T \left( \left\{
              2\,s_{{3}}-s_{{2}}-s_{{1}} \right\} ,\xi_{{1}} \right) }{ \left(
            \xi_{{1}}-\xi_{{2}} \right) \xi_{{1}} \left(
            -\xi_{{1}}-\xi_{{2}}-\xi_{{3}} \right) }} \right).
  \end{multline*}
%% \end{subequations}
\end{example}
}

\begin{remark}\label{patchedgenfun-cone-polytime}
  When $k_0=d-d_0$ is fixed, the set $\Jposet{d}{d_0}$ has a polynomially bounded
  cardinality, and it can be enumerated by a straightforward algorithm along
  with the evaluation of the patching function~$\lambda_{\Moebius}$.  Thus we
  can also compute $A_{\geq d_0}(s+\c,\Lambda)(\xi)$ in the same
  form~\eqref{problem1-shortformula} or~\eqref{problem1-shortformula-variant} in polynomial time.
\end{remark}

\section{Computation of Ehrhart quasi-polynomials}\label{section-Ehrhartcomputation} 

We now apply the approximation of the generating functions of the cones at
vertices to the computation of the highest coefficients for a weighted Ehrhart 
quasi-polynomial.  
We first discuss the case when the weight is a power of a linear form. 

\begin{theorem}\label{main-general}
  Let $\p$ be a simple  polytope and let $\CV(\p)$ denote the set of its
  vertices. For each vertex $s\in\CV(\p)$, let $\c_s$ be the tangent cone of~$s$,
  and let $q_s\in \N$ be a positive integer
  %% the smallest    
  %% (We shouldn't insist to use the smallest integer, as it can be 
  %%  nice to obtain an expression where all q_s are the same.)
  such that  $q_s\, s\in \lattice $.
  %% For $n\in\N$, let $n_s$  be the residue of $n$ mod $q_s$, so that
  %% $0\leq n_s\leq q_s-1$.
  Fix a linear form $\ell\in V^*$ and $M$ a nonnegative integer.
  Fix $0\leq k_0\leq d$ and let $d_0=\max\{d-k_0, 0\}$. Then the Ehrhart quasi-polynomial
  $$
  E(\p,\ell,M; n) = \sum_ {x\in n \p\cap \lattice}
  \frac{\langle\ell,x\rangle^M}{M!}\,$$ coincides in degree $\geq
  M+d-k_0$ with the following quasi-polynomial
  \begin{subequations}\label{eq:main-general-both}
    \begin{equation}\label{eq:main-general}
      \sum_{k=0}^{k_0} \sum_{s\in\CV(\p)} (\lfloor n\rfloor_{q_s}) ^{M+d-k}
      \frac{\langle\xi,s\rangle^{M+d-k}}{(M+d-k)!}\, A_{\geq d_0}\bigl(\{n\}_{q_s} s
      + \c_s,\Lambda\bigr)_{[-d+k]}(\xi)
      % \bigg|_{\xi=\ell}
      ,
    \end{equation}
    evaluated at $\xi=\ell$, which can also be written as
    \begin{equation}\label{eq:main-general-variant}
      \sum_{k=0}^{k_0} n^{M+d-k} \sum_{s\in\CV(\p)} 
      \frac{\langle\xi,s\rangle^{M+d-k}}{(M+d-k)!}\, 
      \Bigl( \e^{-\langle\xi,\{n\}_{q_s}s\rangle} A_{\geq d_0}\bigl(\{n\}_{q_s} s +
      \c_s,\Lambda\bigr)(\xi)\Bigr)_{[-d+k]},  %\bigg|_{\xi=\ell}
    \end{equation}
  \end{subequations}
  evaluated at $\xi=\ell$.
\end{theorem}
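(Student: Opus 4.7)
The plan is to extend the proof of Proposition~\ref{top-Ehrhart-with-generating-function} from lattice polytopes to rational polytopes by splitting each dilated vertex $ns$ into a lattice part $\lfloor n\rfloor_{q_s}\,s$ and a fractional remainder $\{n\}_{q_s}\,s$, and then replacing each exact generating function at a vertex by its patched approximation from Theorem~\ref{approx-generating-function}.

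First, I would apply Brion's Theorem~\ref{brion} to $n\p$, obtaining
$$S(n\p,\Lambda)(\xi) = \sum_{s\in\CV(\p)} S(ns+\c_s,\Lambda)(\xi).$$
Since $q_s s\in\Lambda$ and $\lfloor n\rfloor_{q_s}\in q_s\Z$, the point $\lfloor n\rfloor_{q_s}\,s$ is a lattice point, so Proposition~\ref{Svaluation}(iii) gives
$$S(ns+\c_s,\Lambda)(\xi) = \e^{\langle\xi,\lfloor n\rfloor_{q_s} s\rangle}\,S(\{n\}_{q_s} s+\c_s,\Lambda)(\xi).$$
Substituting $\xi\mapsto t\xi$, expanding the exponential as a power series in $t$ and the generating function into its homogeneous components via Lemma~\ref{lemma-cell-generating_function-grading}, and extracting the $t^M$-coefficient of $\sum_{x\in n\p\cap\Lambda}\e^{t\langle\xi,x\rangle}$, I would obtain by a direct calculation, mirroring~(\ref{generating-function-lattice-vertices-4}), the exact identity
$$E(\p,\xi,M;n) = \sum_{s\in\CV(\p)}\sum_{k=0}^{M+d} (\lfloor n\rfloor_{q_s})^{M+d-k}\,\frac{\langle\xi,s\rangle^{M+d-k}}{(M+d-k)!}\,S(\{n\}_{q_s} s+\c_s,\Lambda)_{[-d+k]}(\xi).$$

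Next, I would invoke Theorem~\ref{approx-generating-function} with the M\"obius patching function from Proposition~\ref{rho}. Since the approximation is valid modulo $\xi$-degree $\geq -d_0+1=-d+k_0+1$, for every $k\leq k_0$ the $\xi$-homogeneous component of degree $-d+k$ satisfies
$$S(\{n\}_{q_s} s+\c_s,\Lambda)_{[-d+k]}(\xi) = A_{\geq d_0}(\{n\}_{q_s} s+\c_s,\Lambda)_{[-d+k]}(\xi).$$
As $\{n\}_{q_s}$ is bounded and $(\lfloor n\rfloor_{q_s})^{M+d-k}$ is a quasi-polynomial in $n$ of degree exactly $M+d-k$, the contribution of the terms with $k>k_0$ is a quasi-polynomial in $n$ of degree strictly less than $M+d-k_0$. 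Setting $\xi=\ell$, expression~(\ref{eq:main-general}) is precisely what remains of the above identity after truncating below $n$-degree $M+d-k_0$.

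To derive the alternative form~(\ref{eq:main-general-variant}), I would regroup using $ns=\lfloor n\rfloor_{q_s} s+\{n\}_{q_s} s$ to write
$$\e^{\langle\xi,\lfloor n\rfloor_{q_s} s\rangle}\,S(\{n\}_{q_s} s+\c_s,\Lambda)(\xi) = \e^{\langle\xi,ns\rangle}\bigl(\e^{-\langle\xi,\{n\}_{q_s} s\rangle}\,S(\{n\}_{q_s} s+\c_s,\Lambda)(\xi)\bigr).$$
Multiplication by the holomorphic factor $\e^{-\langle\xi,\{n\}_{q_s} s\rangle}$, which has non-negative $\xi$-degree, preserves the approximation modulo $\xi$-degree $\geq -d_0+1$, so we may replace $S$ by $A_{\geq d_0}$ inside the parentheses. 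Performing the same $t$-expansion with this regrouping, the factor $\e^{t\langle\xi,ns\rangle}$ contributes pure monomials $n^{M+d-k}$ in place of $(\lfloor n\rfloor_{q_s})^{M+d-k}$, producing~(\ref{eq:main-general-variant}). The main technical point to navigate carefully is the alignment of two gradings: the homogeneous $\xi$-grading of $\CM_\ell$, controlled by the approximation theorem with truncation order $-d+k_0+1$, and the $n$-grading obtained from the $t$-expansion via the identity $(-d+k)+(M+d-k)=M$. This identity ensures that the approximation is exact precisely on those $\xi$-components that contribute to the top $k_0+1$ coefficients in $n$, while the uncontrolled components of $S$ feed only into lower-degree terms that are discarded.
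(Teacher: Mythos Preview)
Your proof is correct and follows essentially the same route as the paper: Brion's theorem, the splitting $n=\lfloor n\rfloor_{q_s}+\{n\}_{q_s}$ with $\lfloor n\rfloor_{q_s}s\in\Lambda$, extraction of the $\xi$-degree $M$ component via homogeneous expansion, replacement of $S$ by $A_{\geq d_0}$ on the low-degree components via Theorem~\ref{approx-generating-function}, and the regrouping trick for~\eqref{eq:main-general-variant}. Your presentation is slightly more explicit (writing the full sum over $k$ and then bounding the $k>k_0$ terms in $n$-degree), but the argument is the same.
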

In the following, we will use the second form~\eqref{eq:main-general-variant}.
  
\begin{remark}
  The sum \eqref{eq:main-general-both} depends polynomially on
$\ell$. However, for an individual vertex $s$, the functions
$$\xi\mapsto \frac{\langle\xi,s\rangle^{M+d-k}}{(M+d-k)!}\, A_{\geq d_0}\bigl(\{n\}_{q_s}
s + \c_s,\Lambda\bigr)_{[-d+k]}(\xi)$$ 
and
$$
\xi\mapsto \frac{\langle\xi,s\rangle^{M+d-k}}{(M+d-k)!}\, \Bigl( \e^{-\langle\xi,\{n\}_{q_s}s\rangle} A_{\geq d_0}\bigl(\{n\}_{q_s} s +
    \c_s,\Lambda\bigr)(\xi)\Bigr)_{[-d+k]} 
$$
are meromorphic functions, which are not
defined  if $\xi$ is singular. Thus in the algorithm we use a
deformation procedure.
\end{remark}
%% \begin{remark}\eqref{eq:main-general} and \eqref{eq:main-general-variant} are
%%   clearly quasi-polynomials in $n$ with 
%%   coefficients which depend only on the residues $\{n\}_{q_s}=(n\bmod q_s),
%%   s\in\CV(\p)$.
%% \end{remark}  
%%%% Removed because I think this is redundant due to the two forms in
%%%% which we show the same function. --Matthias
\begin{proof}[Proof of Theorem~\ref{main-general}]
The sum
$ \sum_ {x\in n \p\cap \lattice} \frac{\langle\xi,x\rangle^M}{M!}$
is the term of $\xi$-degree $M$ in
$$
S(n\p)(\xi)=\sum_{s\in\CV(\p)} S(ns+\c_s)(\xi).
$$
Fix a vertex $s$. We write $n= \lfloor n\rfloor_{q_s} + \{n\}_{q_s}$. As
$\lfloor n\rfloor_{q_s} s$ is a  lattice
point, we have
\begin{equation}\label{dilated-cone}
S(ns+\c_s)(\xi)=\e^{\lfloor n\rfloor_{q_s}\langle\xi, s\rangle}S(\{n\}_{q_s} s+\c_s)(\xi).
\end{equation}
Consider $S(ns+\c_s)_{[M]}{}(\xi)$  as a quasi-polynomial in $n$. By
(\ref{dilated-cone}),  it coincides in degree $\geq M+d-k_0$ with
$$
\sum_{k=0}^{k_0}(\lfloor n\rfloor_{q_s})^{M+d-k}
\frac{\langle\xi,s\rangle^{M+d-k}}{(M+d-k)!}\,S(\{n\}_{q_s}
s+\c_s)_{[-d+k]}(\xi).
$$
Now, for $0\leq k\leq k_0$,  we have
$$
S(\{n\}_{q_s} s+\c_s)_{[-d+k]}(\xi)= A_{\geq d_0}(\{n\}_{q_s} s + \c_s)_{[-d+k]}(\xi).
$$
By specializing on $\xi=\ell$, we obtain the claim in the form of
equation~\eqref{eq:main-general}.\smallbreak

To obtain the second claim in the form of~\eqref{eq:main-general-variant}, 
we write
\begin{equation}\label{dilated-cone-variant}
S(ns+\c_s)(\xi)=\e^{n\langle\xi, s\rangle} \bigl( \e^{-\langle\xi, s\rangle
  \{n\}_{q_s}}S(\{n\}_{q_s} s+\c_s)(\xi) \bigr).
\end{equation}
Again, by expanding we obtain that the quasi-polynomial
$S(ns+\c_s)_{[M]}{}(\xi)$ coincides in degree $\geq M+d-k_0$ with
$$
\sum_{k=0}^{k_0} n^{M+d-k}
\frac{\langle\xi,s\rangle^{M+d-k}}{(M+d-k)!}\,\Bigl(
\e^{-\langle\xi, s\rangle
  \{n\}_{q_s}} S(\{n\}_{q_s}
s+\c_s)(\xi)\Bigr) _{[-d+k]}.
$$
Since $\e^{-\langle\xi, s\rangle \{n\}_{q_s}}$ is analytic in~$\xi$, we have 
for $0\leq k\leq k_0$ that 
\begin{multline}
\Bigl(\e^{-\langle\xi, s\rangle
  \{n\}_{q_s}} S(\{n\}_{q_s} s+\c_s)(\xi)\Bigr)_{[-d+k]}\\
= \Bigl(\e^{-\langle\xi, s\rangle
  \{n\}_{q_s}} A_{\geq d_0}(\{n\}_{q_s} s + \c_s)(\xi)\Bigr)_{[-d+k]}.
\end{multline}

Again, by specializing on $\xi=\ell$, we obtain the claim in the form of
equation~\eqref{eq:main-general-variant}.
\end{proof}

We now derive the coefficients of the weighted Ehrhart polynomial as
short \emph{closed formulas} that are ``step polynomials'' (cf.~\cite{verdoolaege-woods-2005}).  
These can then be evaluated efficiently, providing a corollary (Theorem~\ref{ehrhart-barvinok-style}) in the same form as
Barvinok's theorem in~\cite{barvinok-2006-ehrhart-quasipolynomial}.

\begin{theorem} \label{powersoflinear-stepfun}
 For every fixed number ~$k_0 \in\N$, there exists a polynomial-time algorithm
  for the following problem.

\noindent  Input:
\begin{inputlist}
\item a number~$ d\in\N$ in unary encoding, 
  with $d\geq k_0$, 

\item a finite index set $\CV$,

\item a simple polytope~$\p$, given by its vertices, rational vectors $\ve s_j
  \in \Q^d$ for $j\in\CV$ in binary encoding,
\item a rational vector $\ell \in \Q^d$ in binary encoding,
\item a number~$M \in \N$ in unary encoding.
\end{inputlist}
\noindent Output, in binary encoding,
\begin{outputlist}
\item an index set~$\Gamma$,
\item polynomials $f^{\gamma,m} \in \Q[r_1,\dots,r_{k_0}]$ and integer numbers
  $\zeta^{\gamma,m}_i\in\Z$, $q^{\gamma,m}_i \in \N$ for $\gamma\in \Gamma$ and
  $m=M+d-k_0,\dots,M+d$ and $i=1,\dots,k_0$,
\end{outputlist}
such that the Ehrhart quasi-polynomial 
$$ E(\p,\ell,M; n) = \sum_ {x\in n \p\cap \lattice}
\frac{\langle\ell,x\rangle^M}{M!}\,
= \sum_{m=0}^{M+d} E_m(\p,\ell,M; \{n\}_q)\, n^m
$$
agrees in $n$-degree~$\geq M+d-k_0$ with the quasi-polynomial
\begin{displaymath}
  \sum_{\gamma\in \Gamma} \sum_{m=M+d-k_0}^{M+d} f^{\gamma,m}\Bigl(\{\zeta^{\gamma,m}_1 n\}_{q^{\gamma,m}_1},\dots, \{\zeta^{\gamma,m}_{k_0} n\}_{q^{\gamma,m}_{k_0}}\Bigr)\, n^m.
\end{displaymath}

\end{theorem}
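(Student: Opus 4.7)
The plan is to assemble the algorithm by chaining together the main technical results built up in the paper. The starting point is Theorem~\ref{main-general} in the form \eqref{eq:main-general-variant}, which expresses the top $k_0+1$ coefficients of $E(\p,\ell,M;n)$ as a sum, over vertices $s\in\CV(\p)$ and indices $k=0,\dots,k_0$, of terms of the shape
\[
n^{M+d-k}\cdot\frac{\langle\xi,s\rangle^{M+d-k}}{(M+d-k)!}\cdot\Bigl(\e^{-\langle\xi,\{n\}_{q_s}s\rangle}A_{\geq d_0}(\{n\}_{q_s}s+\c_s,\Lambda)(\xi)\Bigr)_{[-d+k]},
\]
evaluated at $\xi=\ell$. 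The number of vertices is part of the input and hence polynomial, so it suffices to treat one vertex and one value of $k$.

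For a fixed vertex $s$, I would compute the patched generating function $A_{\geq d_0}(s'+\c_s,\Lambda)(\xi)$ with the M\"obius patching function of Proposition~\ref{rho}, as pointed out in Remark~\ref{patchedgenfun-cone-polytime}. Since $|\Jposet{d}{d_0}|$ is polynomially bounded when $k_0$ is fixed, summing Theorem~\ref{th:problem1} (in the form \eqref{problem1-shortformula-variant}) over $I\in\Jposet{d}{d_0}$ produces, in polynomial time, a finite collection of tuples $(\alpha^{(\gamma)},w_i^{(\gamma)},\eta_i^{(\gamma)})$ such that
\[
\e^{-\langle\xi,s'\rangle}A_{\geq d_0}(s'+\c_s,\Lambda)(\xi)=\sum_\gamma\alpha^{(\gamma)}\prod_{i\in I_\gamma^c}T\bigl(\{-\langle\eta_i^{(\gamma)},s'\rangle\},\langle\xi,w_i^{(\gamma)}\rangle\bigr)\cdot\frac{1}{\prod_{i=1}^d\langle\xi,w_i^{(\gamma)}\rangle}.
\]
Next I specialize $s'=\{n\}_{q_s}s$. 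Crucially, for $i\in I_\gamma^c$ the vector $\eta_i^{(\gamma)}$ lies in $\Z^d$, while $q_s s\in\Lambda$, so $q_s\langle\eta_i^{(\gamma)},s\rangle\in\Z$. This implies the identity $\{-\{n\}_{q_s}\langle\eta_i^{(\gamma)},s\rangle\}=\{-n\langle\eta_i^{(\gamma)},s\rangle\}$ modulo~$1$, which after clearing the common denominator can be rewritten as $\{\zeta_i n\}_{Q_i}/Q_i$ for explicit integers $\zeta_i,Q_i$ of polynomial bit-size.

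To extract the homogeneous component of degree $-d+k$ I expand each factor $T(\tau,x)$ via the Bernoulli generating function \eqref{bernoulli-genfun} up to total $\xi$-degree $k\leq k_0$; since the rational prefactor $1/\prod\langle\xi,w_i^{(\gamma)}\rangle$ is homogeneous of degree $-d$, only finitely many terms contribute, and their coefficients are polynomials in the step function arguments $\{\zeta_i n\}_{Q_i}$ (the Bernoulli polynomials evaluated at rational multiples of these). Multiplying by $\frac{\langle\xi,s\rangle^{M+d-k}}{(M+d-k)!}$ produces a homogeneous rational function of $\xi$ of degree~$M$ that has, a priori, poles along the hyperplanes $\langle\xi,w_i^{(\gamma)}\rangle=0$. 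As noted in the remark after Theorem~\ref{main-general}, these poles cancel after summing over~$\gamma$ and~$s$, but to evaluate each term at $\xi=\ell$ safely, I use a standard deformation: replace $\xi$ by $\ell+t\xi_0$ for a generic rational~$\xi_0$, expand in~$t$, and extract the constant term; for fixed $k_0$ this is a polynomial-time operation in the size of the symbolic data.

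Collecting the outputs over all $\gamma$, all $k$, and all vertices~$s$ yields the index set $\Gamma$, the modular data $(\zeta^{\gamma,m}_i,q^{\gamma,m}_i)$, and the rational polynomials $f^{\gamma,m}$ required by the theorem. Polynomial-time complexity follows by tracing each step: the vertex loop contributes a polynomial factor, Barvinok's decomposition inside Theorem~\ref{th:problem1} runs in polynomial time because $k_0$ and hence $\codim L_I$ is fixed, $|\Jposet{d}{d_0}|=O(d^{k_0})$, and the Laurent/Taylor expansions truncated at degree~$k_0$ involve only polynomially many monomials of polynomial bit-size. The main technical obstacle is the bookkeeping of the deformation step together with the conversion of the arguments $\{-\langle\eta_i^{(\gamma)},\{n\}_{q_s}s\rangle\}$ into the canonical step-polynomial form $\{\zeta n\}_Q/Q$; both reduce to elementary but careful manipulations using the integrality of $\eta_i^{(\gamma)}$ for $i\in I^c$ and the defining property of~$q_s$.
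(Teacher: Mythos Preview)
Your proposal is correct and follows essentially the same route as the paper: start from \eqref{eq:main-general-variant}, plug in the short formula \eqref{problem1-shortformula-variant} for each $S^{L_I}$ (summed over $\Jposet{d}{d_0}$ via the M\"obius patching function), observe the cancellation of the global $\e^{\langle\xi,\{n\}_{q_s}s\rangle}$ factor, rewrite the arguments $\{-\langle\eta_i,\{n\}_{q_s}s\rangle\}$ as $\tfrac1{q_i}\{\zeta_i n\}_{q_i}$ using the integrality of $\eta_i$ for $i\in I^c$, and read off the $\xi$-degree $M$ piece by a deformation at $\xi=\ell$.

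Two small points where the paper is a bit more explicit than your write-up. First, it records as a preliminary step that the tangent cones $\c_s$ (i.e., their edge generators) can be computed in polynomial time from the vertex list of a simple polytope via the primal--dual algorithm of Bremner--Fukuda--Marzetta; you implicitly assume this. Second, for the evaluation at $\xi=\ell$ the paper substitutes $\xi=t(\ell+\epsilon\ellpert)$ with \emph{two} formal parameters, so that the entire computation lives in a truncated polynomial ring in the fixed number of variables $r_1,\dots,r_{k_0},t,\epsilon$, and then invokes Lemma~\ref{truncated-product} to justify that the $d$-fold products of truncated series are polynomial-time. Your one-parameter deformation $\xi=\ell+t\xi_0$ after first extracting the homogeneous component in~$\xi$ works too, because the relevant expressions stay in the factored form $\langle\xi,s\rangle^{M+d-k}\cdot(\text{degree-}k\text{ poly in the }\langle\xi,w_i\rangle)/\prod_i\langle\xi,w_i\rangle$; but the paper's two-variable bookkeeping makes the appeal to Lemma~\ref{truncated-product} (fixed number of variables) more transparent.
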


\begin{remark}
  For $d\leq k_0$, the algorithm actually computes the complete Ehrhart
  quasi-polynomial, i.e., the coefficient functions $E_m(\p,\ell,M; \{n\}_q)$
  for $m = 0,\dots, M+d$.  The key point of our method, however, is to
  handle the case where $d > k_0$; then the non-trivial efficiently computable
  approximations come into play.
\end{remark}

\begin{remark}
  The specific form of the quasi-polynomial given by the theorem 
  gives a more precise period~$q_i$
  for the individual terms, rather than a period $q_s$ that is determined by
  the vertex.  The $q_i$ will always be divisors of~$q_s$.  Due to the
  projections into lattices in small dimension $\leq k_0$, 
  these periods can be much smaller than~$q_s$.
  In particular, the highest-degree coefficient~$E_{M+d}$ of course is a
  constant. 
  %% (in other words, all the periods $q_i^{\gamma,M+d}$ are~$1$).
  %%%%%% This parenthetical remark doesn't make sense for rational dilations
  %%%%%% any more??! --Matthias
\end{remark}
We will use the following lemma.
\begin{lemma}[Lemma 4 of \cite{baldoni-berline-deloera-koeppe-vergne:integration}] \label{truncated-product}
For every fixed number $D\in \N$, there exists a polynomial time
algorithm for the following problem.

\noindent Input: a number $M$  in unary encoding, a sequence of $k$
polynomials $P_j\in \Q[X_1,\dots, X_D]$ of total degree at most $M$,
in dense monomial representation.

\noindent Output: the product $P_1\cdots P_k$ truncated at degree
$M$.
\end{lemma}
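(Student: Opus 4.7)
The plan is to assemble the algorithm from three earlier ingredients already in the paper: the vertex-by-vertex reduction from Theorem \ref{main-general}, the short closed formula for the intermediate (and hence patched) generating functions from Theorem \ref{th:problem1} together with Remark \ref{patchedgenfun-cone-polytime}, and the truncated-product Lemma \ref{truncated-product}. The step-polynomial form of the output will drop out once we track how the arguments of the Bernoulli-type factors $T(\tau,x)$ depend on $n$ through the vertex $\{n\}_{q_s}s$.

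First, by Theorem~\ref{main-general}, equation \eqref{eq:main-general-variant}, the top $k_0+1$ coefficients of $E(\p,\ell,M;n)$ are the coefficients in the quasi-polynomial
\[
\sum_{k=0}^{k_0} n^{M+d-k}\sum_{s\in\CV(\p)} \frac{\langle\xi,s\rangle^{M+d-k}}{(M+d-k)!}\,F_{s,k}(n,\xi)\Big|_{\xi=\ell},
\]
where $F_{s,k}(n,\xi)=\bigl(\e^{-\langle\xi,\{n\}_{q_s}s\rangle}A_{\geq d_0}(\{n\}_{q_s}s+\c_s,\Lambda)(\xi)\bigr)_{[-d+k]}$. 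By Remark~\ref{patchedgenfun-cone-polytime}, for each vertex $s$ the patched generating function is a polynomial-time computable sum of terms of the form \eqref{problem1-shortformula-variant}. The prefactor $\e^{\langle\xi,\{n\}_{q_s}s\rangle}$ in that formula cancels exactly against $\e^{-\langle\xi,\{n\}_{q_s}s\rangle}$, leaving
\[
\e^{-\langle\xi,\{n\}_{q_s}s\rangle}A_{\geq d_0}(\{n\}_{q_s}s+\c_s,\Lambda)(\xi) = \sum_\gamma \alpha^{(\gamma)}\prod_{i\in I_\gamma^c}T\bigl(\tau_i^{(\gamma)}(n),\langle\xi,w_i^{(\gamma)}\rangle\bigr)\cdot\frac{1}{\prod_{i=1}^d\langle\xi,w_i^{(\gamma)}\rangle},
\]
in which the only dependence on $n$ lies in the scalars $\tau_i^{(\gamma)}(n)=\{-\langle\eta_i^{(\gamma)},\{n\}_{q_s}s\rangle\}$. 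Because $\eta_i^{(\gamma)}\in\Z^d$ for $i\in I^c_\gamma$ and $q_s s\in\Lambda$, the rational number $\langle\eta_i^{(\gamma)},s\rangle$ has a denominator dividing $q_s$; writing it as $a_i/b_i$ in lowest terms, a short computation with the identity $\{n\}_{q_s}\equiv n\pmod{q_s}$ gives $\tau_i^{(\gamma)}(n)=\{-a_i n\}_{b_i}/b_i$, which is exactly in the desired step-function form $\{\zeta n\}_q/q$ with explicitly computable integers.

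Second, to extract the $[-d+k]$ homogeneous component I would expand each $T$-factor via $T(\tau,x)=-\sum_{j\geq 0}B_j(\tau)x^j/j!$ in $x=\langle\xi,w_i^{(\gamma)}\rangle$ and truncate the product $\prod_{i\in I^c_\gamma}T(\tau_i^{(\gamma)},\langle\xi,w_i^{(\gamma)}\rangle)$ to $\xi$-degree $\leq k_0$. Its coefficients are explicit polynomials in the $\tau_i^{(\gamma)}$ (products of Bernoulli polynomials), and since $|I^c_\gamma|\leq k_0$ each such coefficient involves at most $k_0$ of the step-function variables. By iterated application of Lemma~\ref{truncated-product} (with $D=d$, bound $k_0$ for the $T$-product and bound $M+d$ for the multiplication by $\langle\xi,s\rangle^{M+d-k}/(M+d-k)!$, which has polynomial-size dense representation since $M$ is in unary), these truncations can all be carried out in polynomial time.

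Third, to specialize at $\xi=\ell$, I would apply the standard deformation: choose a generic rational $\ellpert$ so that every $\langle\ell+\epsilon\ellpert,w_i^{(\gamma)}\rangle$ is a nonzero polynomial in $\epsilon$, substitute $\xi=\ell+\epsilon\ellpert$, expand each term as a Laurent series in $\epsilon$ up to $\epsilon^0$, divide by $\prod_i\langle\xi,w_i^{(\gamma)}\rangle$, and take the $\epsilon^0$ coefficient; the cancellation of poles is guaranteed by the fact that the sum over $k$ and $s$ is a polynomial in $\xi$ (Theorem~\ref{main-general}). After this specialization, grouping by the power of $n$ yields the coefficient of $n^m$ as a polynomial $f^{\gamma,m}$ over $\Q$ in variables $r_i=\{\zeta_i^{\gamma,m}n\}_{q_i^{\gamma,m}}$, which is precisely the required output. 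The main obstacle is bookkeeping: ensuring that throughout the truncated expansion, the deformation, and the final collection by powers of $n$, all intermediate data stay of polynomial size, but this is guaranteed because $k_0$ is fixed, $d$ and $M$ are encoded in unary, and every building block (Theorem~\ref{th:problem1}, Remark~\ref{patchedgenfun-cone-polytime}, Lemma~\ref{truncated-product}) is polynomial-time.
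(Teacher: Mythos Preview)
Your proposal does not address the stated lemma at all. Lemma~\ref{truncated-product} is a purely computational statement: given polynomials $P_1,\dots,P_k\in\Q[X_1,\dots,X_D]$ of degree at most $M$ in dense representation (with $D$ fixed), compute the product truncated at degree~$M$ in polynomial time. What you have written is instead a proof sketch for Theorem~\ref{powersoflinear-stepfun}, the main algorithmic result on Ehrhart coefficients; indeed, you explicitly \emph{use} Lemma~\ref{truncated-product} as a black box inside your argument.

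In the paper, Lemma~\ref{truncated-product} is not proved but merely quoted from \cite{baldoni-berline-deloera-koeppe-vergne:integration}. A proof of the actual statement is elementary and has nothing to do with patched generating functions, Brion's theorem, or Barvinok decompositions: since $D$ is fixed, the number of monomials of degree $\leq M$ in $D$ variables is $\binom{M+D}{D}$, which is polynomial in~$M$; hence each $P_j$ has polynomial size, multiplying two such truncated polynomials and discarding terms of degree $>M$ takes polynomial time, and iterating this $k-1$ times keeps all intermediate truncated products of polynomial size. None of this appears in your write-up.

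If your intention was to prove Theorem~\ref{powersoflinear-stepfun}, your outline is close to the paper's own proof (same use of \eqref{eq:main-general-variant}, the cancellation of the $\e^{\langle\xi,\{n\}_{q_s}s\rangle}$ factor, Bernoulli expansion of the $T$-factors, and the $\epsilon$-deformation at $\xi=\ell$), but that is a different statement from the one you were asked to establish.
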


We can now prove the theorem.

\begin{proof}[Proof of Theorem~\ref{powersoflinear-stepfun}]
  Because the polytope~$\p$ is simple, 
  we can use the primal--dual algorithm by Bremner, Fukuda, and Marzetta
  \cite[Corollary~1]{bremner-fukuda-marzetta-1998:primal-dual}, 
  to compute the inequality description
  (H-description) from the given V-description in polynomial time. 
  From the double description, 
  we can compute in polynomial
  time the description of the tangent cones $\smash{\c_{s_j}}$ for $j\in \CV$ by the primitive
  vectors $v_{s_j,1},\dots,v_{s_j,d}\in\Z^d$ such that $\c_{s_j} =
  \c(v_{s_j,1},\dots,v_{s_j,d})$.\smallbreak

  We now use formula~\eqref{eq:main-general-variant} of
  Theorem~\ref{main-general}, which gives (with $d_0=d-k_0$)
  \begin{multline}\label{overall-formula-for-coeff}
    E_m(p,\xi,M; \{n\}_q) \\= \sum_{s\in\CV(\p)} \frac{\langle\xi,s\rangle^{m}}{m!} \Bigl( \e^{-\langle\xi,\{n\}_{q_s}s\rangle} A_{\geq d_0}\bigl(\{n\}_{q_s} s +
      \c_s,\Lambda\bigr)(\xi)\Bigr)_{[-d+k]}
  \end{multline}
  for $m = M+d-k$, when $m\geq M+d-k_0$.  
  We compute this separately for each $k=0,\dots, k_0$, that is, $m =
  M+d-k_0,\dots, M+d$. 
  Let~$s+\c_s$ be one of these cones.
  By the algorithm of Theorem~\ref{th:problem1} and
  Remark~\ref{patchedgenfun-cone-polytime}, we compute the data describing the 
  parametric short formula~\eqref{problem1-shortformula-variant} for $A_{\geq d_0}\bigl(\{n\}_{q_s} s +
  \c_s,\Lambda\bigr)(\xi)$.  
  We then consider one of the summands of  
  $$
  \frac{\langle\xi,s\rangle^m}{m!}
  \Bigl(
  \e^{-\langle\xi,\{n\}_{q_s}s\rangle} A_{\geq d_0}\bigl(\{n\}_{q_s} s +
  \c_s,\Lambda\bigr)(\xi)\Bigr)_{[-d+k]}$$ 
  at a time.  
  Here $\e^{-\langle\xi,\{n\}_{q_s}s\rangle}$ and
  the term $\e^{\langle\xi,\{n\}_{q_s}s\rangle}$
  from~\eqref{problem1-shortformula-variant} cancel, and thus each summand 
  takes the form
 \begin{equation}\label{one-summand}
    \Bigl(\frac{\langle\xi,s\rangle^{M+d-k}}{(M+d-k)!}\Bigr) \Bigl(
    \prod_{i\in I^c} T\bigl(\tau_i(n) , \bigl\langle \xi,w_i\bigr\rangle \bigr)\Bigr)_{[k]}
    \Bigl(
    \frac{1} {\prod_{i=1}^d\bigl\langle  \xi,w_i\bigr\rangle} \Bigr)
  \end{equation}
  where
  \begin{equation}
    \tau_i(n) := \bigl\{ -\bigl\langle \eta _i,
    s\bigr\rangle\{ n \}_{q_s} \bigr\} \quad 
    \text{for $i\in I^c$}.
  \end{equation}
  Let $q_i\in\N$ be the smallest positive integer such
  that $q_i\langle-\eta _i,s\rangle \in\Z$.  
  Then $q_i$ is a divisor of the number~$q_s$ associated with the vertex~$s$, because $\eta _i\in\Z^d$.
  Then
  \begin{displaymath}
    \tau_i(n) = \tfrac1{q_i} \bigl\{ \zeta_i \{ n \}_{q_s} \bigr\}_{q_i}
    \quad\text{with}\quad \zeta_i = q_i \langle-\eta _i,s\rangle \in \Z.
  \end{displaymath}
  Since $q_i$ is a divisor of $q_s$, this simplifies to 
  \begin{equation}
    \tau_i(n) = \tfrac1{q_i} \bigl\{ \zeta_i n \bigr\}_{q_i},
  \end{equation}
  where of course $\zeta_i$ can be reduced modulo~$q_i$ as well because $n$ is
  assumed to be an integer.
  We now treat $r_i := \{ \zeta_i n \}_{q_i} \in \N$ as symbolic
  variables.  

  In order to %% pick the terms of $\xi$-degree $-d+k$ for $k=0,\dots,k_0$
  %% and
  evaluate~\eqref{one-summand} at
  $\xi=\ell$, we note the following. The first factor is holomorphic in~$\xi$
  and homogeneous of $\xi$-degree~$m=M+d-k$, the second factor is holomorphic in~$\xi$ and homogeneous of degree $k$, and the
  third factor is homogeneous of $\xi$-degree~$-d$.
            If $\la \ell,w_i\ra = 0$ for some $i$, we cannot
  just substitute~$\xi=\ell$ in the formula.  
  Instead we use a perturbation.
  In polynomial time, we can compute a rational vector~$\ellpert\in\Q^d$ such
  that  $\la \ellpert,w_i\ra \neq 0$ for all
  vectors~$w_i$ with $\la \ell,w_i\ra = 0$.  It is important that we choose
  the same vector once and for all computations with all cones and summands.  
  
  We then set $\xi=t (\ell+\epsilon\ellpert)$, where $t $ and
  $\epsilon$ are treated as symbolic variables.    Here the exponent of the variable~$t $ keeps
  track of the $\xi$-grading.
  We then do computations with
  truncated series in $\Q[r_i : i\in I^c][t ^{\pm1},\epsilon^{\pm1}]$.
  We note that this is a polynomial ring in a constant number of variables
  only, because $|I^c| $ is bounded above by the constant~$k_0$.  Thus Lemma~\ref{truncated-product}
  gives us a polynomial-time algorithm for multiplying the series.  Then
  \eqref{one-summand} can be written as:
  \begin{equation}\label{one-summand-with-t-factored}
    \frac{\langle\ell+\epsilon\ellpert,s\rangle^{m}}{m!} \cdot \Bigl(
    \prod_{i\in I^c} T\bigl(\tau_i(n) , \bigl\langle t(\ell+\epsilon\ellpert),w_i\bigr\rangle \bigr)
    \Bigr)_{[k]}\cdot\frac{1} {\prod_{i=1}^d\bigl\langle
      \ell+\epsilon\ellpert,w_i\bigr\rangle} \cdot t^{M-k} ,
  \end{equation}
  where the subscript~$[k]$ now means to take the term of $t$-degree~$k$.
  In the end we are 
  interested in the coefficient of the term~$t ^{M} \epsilon^0$.

  Expanding the factors of~\eqref{one-summand-with-t-factored} gives the following contributions, all of which can be
  written down in polynomial time.    First of all, the rational terms $\la
  \ell+\epsilon\ellpert, w_i \ra^{-1}$ give the
  following contribution. If $\la \ell, w_i \ra=0$, we simply get~
  \begin{equation}\label{ratl-term-with-negative-epsilon-degree}
    \frac1{\la \ell+\epsilon\ellpert, w_i \ra}
    = \frac1{\la \ellpert, w_i \ra} \epsilon^{-1}.
  \end{equation}
  If $\la \ell, w_i \ra\neq 0$, we get the geometric series in~$\epsilon$
  \begin{displaymath}
    \frac1{\la \ell+\epsilon\ellpert, w_i \ra}
    = \frac{1}{\la \ell, w_i\ra}\sum_{u=0}^{\infty} \left(-\frac{\la \ellpert,
        w_i\ra}{\la \ell, w_i\ra}\right)^u \epsilon^u.
  \end{displaymath}
  The
  first and second terms in~\eqref{one-summand-with-t-factored} are holomorphic, thus the only negative degrees
  in~$\epsilon$ come from the rational
  terms~\eqref{ratl-term-with-negative-epsilon-degree}.  Let~$U$ be the number
  of vectors~$w_i$ that are orthogonal to $\ell$;
  then~$\epsilon^{-U}$ is the lowest negative degree.  Note that $U\leq d$.
  Since we wish to find the term of $\epsilon$-degree~0, we can truncate all series
  after~$\epsilon$-degree~$U$:
  \begin{equation}\label{ratl-term-without-negative-epsilon-degree-truncated}
    \frac1{\la \ell+\epsilon\ellpert, w_i \ra}
    = \frac{1}{\la \ell, w_i\ra}\sum_{u=0}^{U} \left(-\frac{\la \ellpert,
        w_i\ra}{\la \ell, w_i\ra}\right)^u  \epsilon^u
    +  o_\epsilon(\epsilon^{U}).
  \end{equation}
  We expand the first factor of~\eqref{one-summand-with-t-factored} as follows.
  \begin{equation}\label{powerlinform-expanded}
    \frac{\langle \ell+\epsilon\ellpert,s\rangle^m}{m!} = 
    \sum_{u=0}^{\min\{m,U\}} \binomial(m,u) \langle\ell,s\rangle^{m-u}
    \langle\ellpert,s\rangle^u \epsilon^u
    + o_\epsilon(\epsilon^U).
  \end{equation}
  Now we consider the holomorphic terms
  \begin{align}
    &T(\tau_i(n), \langle t (\ell+\epsilon\ellpert),w_i\rangle)
%%     = e^{\tau_i (\langle
%%       t (\ell+\epsilon\ellpert),w_i\rangle)}
%%     \frac {\langle t (\ell+\epsilon\ellpert),w_i\rangle}{1-e^{\langle
%%         t (\ell+\epsilon\ellpert),w_i\rangle}}
    \notag\\
    &\quad= -\sum_{j=0}^\infty \frac1{j!} B_j(\tau_i)\, \bigl\langle
    t (\ell+\epsilon\ellpert),w_i\bigr\rangle^j \notag\\
    &\quad= -\sum_{j=0}^{k_0} \frac1{j!} B_j\bigl(\tfrac1{q_i} r_i\bigr)
    \left(\sum_{u=0}^{\min\{j,U\}} \binomial(j,u) \langle\ell, w_i\rangle^{j-u} \langle \ellpert,w_i\rangle^{u}
    \epsilon^{u}\right) t ^j\notag\\
  & \qquad\qquad\qquad
  + o_t (t ^{k_0}) + o_\epsilon(\epsilon^U).\label{todd-expanded}
  \end{align}
  The Bernoulli polynomials $B_j(\tau_i)$ of degree $j\leq k_0$ that appear in this formula can be efficiently expanded in
  polynomial time using recursion formulas.  We remark that the variables~$r_i$ appear
  with a degree that is at most that of~$t$. 
  Using Lemma~\ref{truncated-product}, we multiply the truncated
  series~\eqref{todd-expanded} for $i\in I^c$ in
  $\Q[r_i : i\in I^c][t][\epsilon]$, truncating 
  in each step after $t ^{k_0}$ and $\epsilon^{U}$.  We thus obtain the second
  factor of~\eqref{one-summand-with-t-factored}, 
  \begin{equation}\label{todd-product-one-coeff}
    \Bigl(
    \prod_{i\in I^c} T\bigl(\tau_i(n) , \bigl\langle t(\ell+\epsilon\ellpert),w_i\bigr\rangle \bigr)
    \Bigr)_{[k]}    \quad\text{for all $k=0,\dots,k_0$,}
  \end{equation}
  as a truncated series in $\Q[r_i : i\in I^c][\epsilon]$. 

  Then we multiply the truncated series
  \eqref{ratl-term-with-negative-epsilon-degree},
  \eqref{ratl-term-without-negative-epsilon-degree-truncated},
  \eqref{todd-product-one-coeff}, and~\eqref{powerlinform-expanded}
  in polynomial time, truncating
  in each step after $\epsilon^{U}$, using
  Lemma~\ref{truncated-product}.  
  In the end, we read out the coefficient of~$\epsilon^0$ as a polynomial in~$\Q[r_i : i\in I^c]$.
  Then we substitute for~$r_i$.
  Collecting these terms gives the formula for the Ehrhart coefficient~$E_m(\p,\xi,M; \{n\}_q)$.
\end{proof}

{\small\begin{example}
  Let us give a small example of the output of our algorithm for
  $E_m(\p,\ell,M,\{n\}_q)$, when $\p$ is the simplex in $\R^5$ with vertices:
  $$(0, 0, 0, 0,0),\; (\tfrac{1}{2}, 0, 0, 0,0),\; (0,\tfrac{1}{2} , 0,
  0,0),\; (0, 0, \tfrac{1}{2}, 0,0),\; (0, 0, 0, \tfrac{1}{6},0),\; (0, 0, 0,0, \tfrac{1}{6}).$$
We consider the linear form $\ell$ on $\R^5$ given by the scalar product with
$(1,1,1,1,1)$.

If $M=0$, the coefficients of $E_m(\p,\ell,M=0;\{n\}_q)$ are just the
coefficients of the unweighted Ehrhart quasi-polynomial $S(n\p,1)$. We obtain
\begin{multline*}
  S(n\p,1)=\frac{1}{34560}n^5+\Bigl(\frac{5}{3456}-\frac{1}{6912}\{n\}_2\Bigr)
  n^4\\
  +\Bigl(\frac{139}{5184}-\frac{5}{864}\{n\}_2+\frac{1}{3456}(\{n\}_2)^2\Bigr)n^3+\cdots.
\end{multline*}
Now if $M=1$, all integral points $(x_1,x_2,x_3,x_4,x_5)$ are weighted with
the function $h(x)=x_1+x_2+x_3+x_4+x_5$, and we obtain
\begin{multline*}
  S(n\p,h)=\frac{11}{1244160}n^6+\Bigl(\frac{19}{41472}-\frac{11}{207360}\{n\}_2\Bigr)n^5\\
  + \Bigl(\frac{553}{62208}-\frac{95}{41472}\{n\}_2+\frac{11}{82944}(\{n\}_2)^2\Bigr)n^4+\cdots.
\end{multline*}

We can remark that although $q=6$ is the smallest integer such that $q\p$ is a
lattice polytope, only periodic functions of $n$ mod $2$ enter in the top
three Ehrhart coefficients. This is indeed conform to the known periodicity
properties of the Ehrhart coefficients.
\end{example}}

As a corollary, simply by evaluating the step polynomials, we obtain the
following result, which directly extends the complexity result from Barvinok's
paper to the weighted case.

\begin{theorem}[Evaluation of the Ehrhart coefficients for a given dilation class
  $\{n\}_q$] 
  \label{ehrhart-barvinok-style}
  For every fixed number ~$k_0 \in\N$, there exists a polynomial-time algorithm
  for the following problem.

\noindent  Input:
\begin{inputlist}
\item a number~$ d\in\N$ in unary encoding, with $d\geq k_0$,

\item a finite index set $\CV$,

\item a simple polytope~$\p$, given by its vertices, 
  rational vectors $\ve s_j \in \Q^d$ for $j\in\CV$ %% , and their tangent
%%   cones $\c_{s_j} = \c(v_{s_j,1},\dots,v_{s_j,d}) \subset \R^d$, represented
%%   by the primitive vectors $v_{s_j,1},\dots,v_{s_j,d}\in\Z^d$
  in binary
  encoding,
\item a rational vector $\ell \in \Q^d$ in binary encoding,
\item a number~$M \in \N$ in unary encoding,
\item a number~$n$ in binary encoding,
\end{inputlist}
\noindent Output, in binary encoding,
\begin{outputlist}
\item a positive integer $q\in\N$ such that $q\p$ is a lattice polytope and
\item the numbers $E_m(\p,\ell,M;\{ n \}_q)$ for $m=M+d-k_0,\dots,M+d$.
\end{outputlist}
\end{theorem}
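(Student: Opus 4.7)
The proof is essentially a direct corollary of Theorem~\ref{powersoflinear-stepfun}. The plan is to first produce the closed-form step-polynomial expression for the Ehrhart coefficients using that theorem, and then evaluate it at the given~$n$.

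First, I would compute the period~$q$. Since~$\p$ is given by its vertices $s_j\in\Q^d$, I take $q$ to be the least common multiple of the denominators of all coordinates of all vertices (using binary gcd/lcm, this is polynomial time in the binary encoding of the input). Then $q\p$ is a lattice polytope by construction. Next, I invoke the polynomial-time algorithm of Theorem~\ref{powersoflinear-stepfun} on the input $(d,\CV,\p,\ell,M)$. This yields the index set~$\Gamma$, polynomials $f^{\gamma,m}\in\Q[r_1,\dots,r_{k_0}]$ (given in dense monomial form), and integer tuples $(\zeta^{\gamma,m}_i,q^{\gamma,m}_i)$ for $\gamma\in\Gamma$, $m\in\{M+d-k_0,\dots,M+d\}$, and $i=1,\dots,k_0$, such that
\begin{displaymath}
  E_m(\p,\ell,M;\{n\}_q) = \sum_{\gamma\in\Gamma} f^{\gamma,m}\Bigl(\{\zeta^{\gamma,m}_1 n\}_{q^{\gamma,m}_1},\dots,\{\zeta^{\gamma,m}_{k_0} n\}_{q^{\gamma,m}_{k_0}}\Bigr)
\end{displaymath}
for $m\geq M+d-k_0$. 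The bit-sizes of $\Gamma$, $f^{\gamma,m}$, $\zeta^{\gamma,m}_i$ and $q^{\gamma,m}_i$ are all bounded polynomially in the input size.

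The remaining step is the evaluation at the given integer~$n$. For each $\gamma,m,i$, I compute $\{\zeta^{\gamma,m}_i n\}_{q^{\gamma,m}_i}$ by a single integer division in binary, which is polynomial time in the bit-size of $\zeta^{\gamma,m}_i$, $q^{\gamma,m}_i$, and~$n$. I then evaluate $f^{\gamma,m}$ at these rational arguments using Horner's scheme (in a constant number of variables $k_0$, and with polynomially-bounded degree in each variable because the series in the proof of Theorem~\ref{powersoflinear-stepfun} were truncated at degree~$k_0$); this is polynomial time. Finally, I sum over $\gamma\in\Gamma$ to obtain each coefficient $E_m(\p,\ell,M;\{n\}_q)$, and output $q$ together with the $k_0+1$ coefficients.

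There is essentially no obstacle: once Theorem~\ref{powersoflinear-stepfun} is established, all arithmetic is over~$\Q$ in binary, with polynomially many operations on polynomially-sized rationals. The only mild point to check is that the rational numbers produced by the evaluation do not blow up: since the total degree of each $f^{\gamma,m}$ in the variables~$r_i$ is at most~$k_0$ (a constant), the evaluation involves only a constant number of multiplications of values with polynomially bounded numerators and denominators, keeping bit-sizes polynomial throughout. This yields the desired polynomial-time algorithm.
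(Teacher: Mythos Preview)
Your proposal is correct and matches the paper's approach exactly: the paper states this theorem as a corollary of Theorem~\ref{powersoflinear-stepfun}, obtained ``simply by evaluating the step polynomials,'' and gives no further proof. Your write-up supplies the routine evaluation details that the paper omits; the paper's only additional remark is that one could alternatively plug in the numerical values $r_i=\{\zeta_i n\}_{q_i}$ from the start and work in the two-variable ring $\Q[t^{\pm1},\epsilon^{\pm1}]$ rather than carrying the $r_i$ symbolically.
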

\begin{remark}
  A direct algorithm for computing $E_m(\p,\ell,M;\{ n \}_q)$ for just one dilation
  class~$\{ n \}_q$ could of course use the values $r_i = \{ \zeta_i n
  \}_{q_i} \in\Z$ rather than symbolic variables~$r_i$ and would therefore
  only need to do calculations with truncated series in the two-variable ring
  $\Q[t ^{\pm1},\epsilon^{\pm1}]$.
\end{remark}

Via the decomposition of polynomials into powers of linear
forms, which is, as
discussed in \cite{baldoni-berline-deloera-koeppe-vergne:integration},
polynomial-time  under suitable hypotheses,
%% and the
%% discussion of the computational complexity of simple polytopes at the
%% beginning of this section, 
we obtain the following corollary.

\begin{corollary}
  \label{ehrhart-barvinok-style-general}
  For every fixed number ~$k_0 \in\N$, there exist polynomial-time algorithms
  for the following problems.

\noindent  Input:
\begin{inputlist}
\item a number~$ d\in\N$ in unary encoding, with $d\geq k_0$,
\item a simple rational polytope~$\p\subset\R^d$, given by its vertices in binary
  encoding,
  %%% NOTE: We cannot accept it to be given by linear inequalities!!
\item a number~$M$ in unary encoding,
\item a polynomial~$h$ of degree~$\leq M$ which is given either as
  \begin{enumerate}[\rm(a)]
  \item a power of a linear form, or
  \item a sparse polynomial where each monomial only depends on a fixed number of variables, or
  \item a sparse polynomial of fixed total degree, 
  \end{enumerate}
\item a number~$n$ in binary encoding,
\end{inputlist}
\noindent Output, in binary encoding,
\begin{outputlist}
\item a positive integer $q\in\N$ such that $q\p$ is a lattice polytope and
\item the numbers $E_m(\p, h; \{ n \}_q)$ for $m=M+d-k_0,\dots,M+d$.
\end{outputlist}
\end{corollary}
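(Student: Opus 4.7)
The plan is to reduce each of the three cases to the linear-form-power case already handled by Theorem~\ref{ehrhart-barvinok-style}. The key is the linearity of the map $h\mapsto \sum_{x\in n\p\cap\Lambda}h(x)$, which transfers termwise to every Ehrhart coefficient $E_m(\p,h;\{n\}_q)$.

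For case~(a), there is nothing to prove: the weight $h$ is already (up to scaling) a power of a linear form, and a direct call to Theorem~\ref{ehrhart-barvinok-style} with inputs $(d,\p,\ell,M,n)$ returns the desired coefficients in polynomial time.

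For cases~(b) and~(c), I invoke the polynomial-time decomposition algorithm established in \cite{baldoni-berline-deloera-koeppe-vergne:integration}. Under either hypothesis, that algorithm produces, in time polynomial in the input size, an identity
\begin{equation}
  h(x) \;=\; \sum_{j=1}^{N} c_j\,\langle \xi_j,x\rangle^{M_j},
  \qquad c_j\in\Q,\ \xi_j\in\Q^d,\ M_j\leq M,
\end{equation}
with $N$ polynomially bounded in the input size. I then call Theorem~\ref{ehrhart-barvinok-style} once for each index $j$, with parameters $(d,\p,\xi_j,M_j,k_0,n)$. Because the period $q$ produced by Theorem~\ref{ehrhart-barvinok-style} depends only on $\p$ (it is the smallest positive integer such that $q\p$ is a lattice polytope), a single common $q$ may be used across all $N$ calls. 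Each call returns the top $k_0+1$ coefficients $E_m(\p,\xi_j,M_j;\{n\}_q)$ for $m=M_j+d-k_0,\dots,M_j+d$. Using the normalization in Definition~\ref{power-linear_form_top_Ehrhart} together with linearity, I assemble
\begin{equation}
  E_m(\p,h;\{n\}_q) \;=\; \sum_{j\,:\,M_j+d\,\geq\, m} c_j\, M_j!\; E_m(\p,\xi_j,M_j;\{n\}_q)
\end{equation}
for each $m=M+d-k_0,\dots,M+d$. Since $M_j\leq M$ forces $M_j+d-k_0\leq M+d-k_0$, every coefficient appearing on the right-hand side already belongs to the top $k_0+1$ coefficients computed by Theorem~\ref{ehrhart-barvinok-style} for $(\xi_j,M_j)$, so no further computation is required.

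The running-time analysis is routine: polynomially many summands, each handled in polynomial time by Theorem~\ref{ehrhart-barvinok-style}, followed by a polynomial-time linear combination. The only genuinely substantive ingredient is the polynomial-time decomposition of $h$ into powers of linear forms under hypotheses~(b) and~(c); this is the step where any technical difficulty would lie, but it is imported as a black box from \cite{baldoni-berline-deloera-koeppe-vergne:integration} rather than reproved here.
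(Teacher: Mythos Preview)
Your proposal is correct and follows exactly the route indicated by the paper: decompose $h$ into a polynomial number of powers of linear forms via \cite{baldoni-berline-deloera-koeppe-vergne:integration}, apply Theorem~\ref{ehrhart-barvinok-style} to each summand, and recombine by linearity. The paper itself gives only a one-sentence sketch of this reduction, so your added care about the normalization factor~$M_j!$ and the verification that the required degree range $[M+d-k_0,\,M_j+d]$ lies inside the computed range $[M_j+d-k_0,\,M_j+d]$ is a welcome elaboration rather than a deviation.
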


%\clearpage
\section{Experiments} \label{experiments}

We implemented the algorithms in \emph{Maple}, for the unweighted case and assuming that
the input were lattice simplices of full dimension (in this case the quasi-polynomial becomes a polynomial). This assumption was made for simplicity
of output in the calculation and because available software to verify the
results (e.g., \emph{LattE macchiato}~\cite{latte-macchiato}) cannot
compute with weights. In addition, already the problem of computing Ehrhart polynomials for lattice simplices has received attention by
many researchers and it is non-trivial (see e.g., the references in \cite{beifangchen:2002}).
After checking simple low-dimensional examples by hand, we set up automatic scripts for generating random tests.
The simplices generated had vertex coordinates drawn uniformly at random from $\{-99,\dots,99\}$. We  timed the speed of the procedure to compute the top three Ehrhart coefficients in 50 random simplices per dimension and recorded the average time of computation.
We compared with the computation of the \emph{full} Ehrhart polynomials
using the state-of-the-art algorithms implemented in \emph{LattE macchiato}
\cite{latte-macchiato}; see Table~\ref{tab:lattice-simplices}.

\begin{table}[t]
\caption{Computation times for Ehrhart polynomials of random lattice simplices}
\label{tab:lattice-simplices}
\begin{center}
\def~{\hphantom0}
\begin{tabular}{ c  c  c  c  c }
\toprule
& \multicolumn{4}{c}{Average runtime (CPU seconds)} \\
\cmidrule{2-5}
& \multicolumn{3}{c}{Full (\emph{LattE macchiato})} &  \\
\cmidrule{2-4} 
Dimension &  \quad Dual\quad & \quad Primal\quad & Primal${}_{1000}$ & \smash[t]{\begin{tabular}[b]{c}Top 3\\ (new code) \end{tabular}} \\ 
\midrule
        3       & ~0.16     & ~~0.10  & ~~0.04 & ~~~1.12    \\[1ex]
        4       & 28.00       & ~~4.68 & ~~0.28 & ~~~4.31    \\[1ex]
        5       & & 317.5~ & ~~5.8~         & ~~13.4~   \\[1ex]
        6       & &        & 198.0~      & ~~37.4~   \\[1ex]
        7       & &        &             & ~103\hphantom{.00}  \\[1ex]
        8       & &        &             & ~294\hphantom{.00}  \\[1ex]
        9       & &        &             & ~393\hphantom{.00}  \\[1ex]
        10      & &        &             & 1179\hphantom{.00} \\[1ex]
        11      & &        &             & 1681\hphantom{.00} \\
\bottomrule
\end{tabular}
\end{center}
\end{table}

In the table, \emph{Dual} refers to an implementation of Barvinok's
decomposition of the duals of the tangent cones into unimodular
cones, as implemented first in \emph{LattE}~\cite{latte1}, and which is still
the default method in \emph{LattE macchiato}.\footnote{The LattE macchiato command is \texttt{count
    --ehrhart-polynomial}.}
\emph{Primal} refers to a primal variant of Barvinok's
decomposition described in~\cite{koeppe:irrational-barvinok}; it is more
efficient for these examples because the determinants of the dual cones are
much larger.\footnote{The  command is \texttt{count
    --ehrhart-polynomial --irrational-primal}.}  We remark that our implementation of the new
algorithm in Maple also uses a primal variant of Barvinok's decomposition to
unimodular cones, which was introduced in \cite{Brion1997residue}.  
Thus the new code should be compared to the runtimes listed in column
\emph{Primal}.  Finally, \emph{Primal}${}_{1000}$ refers to a variant in which
Barvinok's decomposition is stopped when a cone has a determinant at
most~$1000$; then the points in the fundamental parallelepipeds are
enumerated.\footnote{The command used is \texttt{count
    --ehrhart-polynomial --irrational-primal --maxdet=1000}.}  
%%% Note that "--ehrhart-polynomial" implies "--exponential", so I omitted
%%% that option here. --Matthias

All computations were stopped if unfinished after 30 minutes, thus the table ends at dimension 11 because all
randomly generated examples we tried in dimension 12 took more than 30 minutes
of calculation. 
The computation times are given in CPU seconds on a computer with AMD Opteron 880 processors
running at 2.4\,GHz. 
\smallbreak

In conclusion, the experiments indicate that the algorithms presented here can
lead to dramatic improvements upon the computation of full Ehrhart polynomials.
The fact that, for very low dimensions, the implementation is slower than
\emph{LattE macchiato}, is explained by the choice of \emph{Maple} as an
implementation language.  \emph{Maple} is an interpreted system, which is much slower
than C++, the implementation language of \emph{LattE macchiato}. 
We expect that the speedups of~\emph{Primal}${}_{1000}$ compared to \emph{Primal}, which were
first documented in~\cite{koeppe:irrational-barvinok}, will also be obtained in
a refined implementation of our new algorithms.  

The implementation is available at~\cite{topehrhart-accompanying-programs}.

%%\clearpage

\section*{Acknowledgments}

 This article is part of a research which was made possible by
 several meetings of the authors,
at the Centro di Ricerca Matematica Ennio De Giorgi of the Scuola
Normale Superiore, Pisa in 2009,  in a SQuaRE program at the
American Institute of Mathematics, Palo Alto, in July 2009 and
September 2010, and in the Research in Pairs program at
Mathematisches Forschungsinstitut Oberwolfach in March/April 2010.
The support of all three institutions is gratefully acknowledged.

V.~Baldoni was partially supported by the Cofin 40\%, MIUR.
J. De Loera was partially supported by grant DMS-0914107 of the National
Science Foundation.
M.~K\"oppe was partially supported by grant DMS-0914873 of the National
Science Foundation.

The authors wish to thank two undergraduate students at UC Davis,
Brandon Dutra and Gregory Pinto, for their diligent help with the
computational experiments and testing.

\bibliography{../biblio}
\bibliographystyle{../amsabbrv}

\end{document}